\newcommand{\sigmaop}[1]{\mathop{\mathpalette\@sigmaop{#1}}\slimits@}
\newcommand{\@sigmaop}[2]{%
  \vphantom{\sum}%
  \sbox\z@{$\m@th#1\sum$}%
  \dimen@=\ht\z@ \advance\dimen@\dp\z@
  \dimen\tw@=\wd\z@
  \ifx#1\displaystyle\dimen@=.9\dimen@\fi
  \ooalign{%
    \hidewidth
    $\vcenter{\hbox{$\m@th#1#2$\kern.3\dimen\tw@}%
     \ifx#1\scriptstyle\kern-.25ex\fi}$\hidewidth\cr
    $\vcenter{\hbox{%
      \resizebox{!}{\dimen@}{$\m@th\boxtimes$}%
    }\ifx#1\scriptstyle\kern-.25ex\fi}$\cr
  }%
}
\numberwithin{equation}{subsection}
\newtheorem{theorem}{Theorem}[subsection]
\newtheorem{lemma}[theorem]{Lemma}
\newtheorem{conjecture}[theorem]{Conjecture}
\newtheorem{corollary}[theorem]{Corollary}
\newtheorem{definition}[theorem]{Definition}
\newtheorem{proposition}[theorem]{Proposition}
\newtheorem{assumption}[theorem]{Assumption}
\newtheorem*{thm1}{Theorem 1}
\newtheorem*{thm2}{Theorem 2}
\newtheorem*{thm3}{Theorem 3}
\newtheorem*{conj1}{Conjecture 1}
\newtheorem*{conj2}{Conjecture 2}
\theoremstyle{remark}
\newcommand{\GZip}{\mathop{\text{$G$-{\tt Zip}}}\nolimits}
\newcommand{\HZip}{\mathop{\text{$H$-{\tt Zip}}}\nolimits}
\newcommand{\GF}{\mathop{\text{$G$-{\tt ZipFlag}}}\nolimits}
\newcommand{\HF}{\mathop{\text{$H$-{\tt ZipFlag}}}\nolimits}
\newskip\procskipamount
\newskip\interskipamount
\newskip\refskipamount
\newcommand{\procskip}{\vskip\procskipamount}
\newcommand{\interskip}{\vskip\interskipamount}
\newcommand{\refskip}{\vskip\refskipamount}
\newcommand{\procbreak}{\par
   \ifdim\lastskip<\procskipamount\removelastskip
   \penalty-100
   \procskip\fi
   \noindent\ignorespaces}
\newcommand{\titlebreak}{\par%
\ifdim\lastskip<\interskipamount\removelastskip%
\penalty10000%
\interskip\fi%
\noindent}%
\newcommand{\interbreak}{\par%
\ifdim\lastskip<\interskipamount\removelastskip%
\penalty-100%
\interskip\fi%
\noindent\ignorespaces}%
\newcommand{\refbreak}{\par%
\ifdim\lastskip<\refskipamount\removelastskip%
\penalty-100%
\refskip\fi%
\noindent\ignorespaces}%
\newcounter{listcounter}
\newcounter{deflistcounter}
\newcounter{equivcounter}
\newskip{\itemsepamount}
\newskip{\topsepamount}
\newenvironment{assertionlist}{%
  \begin{list}
    {\upshape (\arabic{listcounter})}
    {\setlength{\leftmargin}{18pt}
     \setlength{\rightmargin}{0pt}
     \setlength{\itemindent}{0pt}
     \setlength{\labelsep}{5pt}
     \setlength{\labelwidth}{13pt}
     \setlength{\listparindent}{\parindent}
     \setlength{\parsep}{0pt}
     \setlength{\itemsep}{\itemsepamount}
     \setlength{\topsep}{\topsepamount}
     \usecounter{listcounter}}}
  {\end{list}}
\newenvironment{definitionlist}{%
  \begin{list}
    {\upshape (\alph{deflistcounter})}
    {\setlength{\leftmargin}{18pt}
     \setlength{\rightmargin}{0pt}
     \setlength{\itemindent}{0pt}
     \setlength{\labelsep}{5pt}
     \setlength{\labelwidth}{13pt}
     \setlength{\listparindent}{\parindent}
     \setlength{\parsep}{0pt}
     \setlength{\itemsep}{\itemsepamount}
     \setlength{\topsep}{\topsepamount}
     \usecounter{deflistcounter}}}
  {\end{list}}
\newenvironment{equivlist}{%
  \begin{list}
    {\upshape (\roman{equivcounter})}
    {\setlength{\leftmargin}{18pt}
     \setlength{\rightmargin}{0pt}
     \setlength{\itemindent}{0pt}
     \setlength{\labelsep}{5pt}
     \setlength{\labelwidth}{13pt}
     \setlength{\listparindent}{\parindent}
     \setlength{\parsep}{0pt}
     \setlength{\itemsep}{\itemsepamount}
     \setlength{\topsep}{\topsepamount}
     \usecounter{equivcounter}}}
  {\end{list}}
\newenvironment{bulletlist}{%
  \begin{list}
    {\upshape \textbullet}
    {\setlength{\leftmargin}{18pt}
     \setlength{\rightmargin}{0pt}
     \setlength{\itemindent}{0pt}
     \setlength{\labelsep}{6pt}
     \setlength{\labelwidth}{12pt}
     \setlength{\listparindent}{\parindent}
     \setlength{\parsep}{0pt}
     \setlength{\itemsep}{\itemsepamount}
     \setlength{\topsep}{\topsepamount}}}
  {\end{list}}
\newcommand{\Ccal}{{\mathcal C}}
\newcommand{\Ecal}{{\mathcal E}}
\newcommand{\Fcal}{{\mathcal F}}
\newcommand{\Hcal}{{\mathcal H}}
\newcommand{\Mcal}{{\mathcal M}}
\newcommand{\Ncal}{{\mathcal N}}
\newcommand{\Ocal}{{\mathcal O}}
\newcommand{\Ucal}{{\mathcal U}}
\newcommand{\Vcal}{{\mathcal V}}
\newcommand{\Xcal}{{\mathcal X}}
\newcommand{\Zcal}{{\mathcal Z}}
\newcommand{\mfr}{{\mathfrak m}}
\newcommand{\pfr}{{\mathfrak p}}
\newcommand{\Sfr}{{\mathfrak S}}
\renewcommand{\AA}{\mathbb{A}}
\newcommand{\BB}{\mathbb{B}}
\newcommand{\CC}{\mathbb{C}}
\newcommand{\FF}{\mathbb{F}}
\newcommand{\GG}{\mathbb{G}}
\newcommand{\NN}{\mathbb{N}}
\newcommand{\QQ}{\mathbb{Q}}
\newcommand{\TT}{\mathbb{T}}
\newcommand{\ZZ}{\mathbb{Z}}
\newcommand{\Ascr}{{\mathscr A}}
\newcommand{\Bscr}{{\mathscr B}}
\newcommand{\Hscr}{{\mathscr H}}
\newcommand{\Lscr}{{\mathscr L}}
\newcommand{\Pscr}{{\mathscr P}}
\newcommand{\Sscr}{{\mathscr S}}
\newcommand{\cent}{{\rm Cent}}
\DeclareMathOperator{\Gal}{Gal}
\DeclareMathOperator{\Span}{Span}
\DeclareMathOperator{\Lie}{Lie}
\DeclareMathOperator{\ord}{ord}
\DeclareMathOperator{\pr}{pr}
\DeclareMathOperator{\Sbt}{Sbt}
\DeclareMathOperator{\Sh}{Sh}
\DeclareMathOperator{\spec}{Spec}
\DeclareMathOperator{\Sch}{Sbt}
\DeclareMathOperator{\Tr}{Tr}
\newcommand{\Hasse}{\mathsf{pHa}}
\newcommand{\zip}{\mathsf{zip}}
\newcommand{\GS}{\mathsf{GS}}
\newcommand{\lw}{\mathsf{lw}}
\newcommand{\hw}{\mathsf{hw}}
\newcommand{\flag}{\mathsf{flag}}
\DeclareMathOperator{\GL}{GL}
\DeclareMathOperator{\GSp}{GSp}
\DeclareMathOperator{\Sp}{Sp}
\DeclareMathOperator{\GU}{GU}
\newcommand{\gx}{(\mathbf G, \mathbf X)}
\newcommand{\End}{{\rm End}}
\newcommand{\loccit}{{\em loc.\ cit. }}
\newcommand{\diag}{{\rm diag}}
\renewcommand{\div}{{\rm div}}
\DeclareMathOperator{\Ind}{Ind}
\DeclareMathOperator{\Res}{Res}
\DeclareMathOperator{\Flag}{Flag}
\DeclareMathOperator{\ha}{ha}
\DeclareMathOperator{\Min}{Min}
\DeclareMathOperator{\Ha}{Ha}
\newcommand{\relmiddle}[1]{\mathrel{}\middle#1\mathrel{}}
\begin{document}

\author{Jean-Stefan Koskivirta}

\title{A vanishing theorem for vector-valued Siegel automorphic forms in characteristic $p$} 

\date{}

\maketitle

\begin{abstract}
We show that the space of vector-valued Siegel automorphic forms in characteristic $p$ vanishes when the weight is outside of an explicit locus. This result is a special case of a general conjecture about Hodge-type Shimura varieties formulated in previous work with W. Goldring.
\end{abstract}

\section*{Introduction}
To study automorphic forms attached to a reductive group $\textbf{G}$ over $\QQ$, it is often useful to view them as global sections of certain vector bundles on a Shimura variety. Of course, this is only possible when the group $\textbf{G}$ admits such a variety. In this paper, we investigate automorphic forms attached to the group $\textbf{G}=\GSp_{2n,\QQ}$, called Siegel automorphic forms. We are particularly interested in the properties of automorphic forms with coefficients in $\overline{\FF}_p$, for a prime number $p$. We will see that the geometric properties of the attached Siegel-type Shimura variety can be translated on the level of automorphic forms. This geometric approach was used in several previous joint papers with Wushi Goldring (\cite{Goldring-Koskivirta-Strata-Hasse}, \cite{Goldring-Koskivirta-global-sections-compositio}, \cite{Goldring-Koskivirta-divisibility}, \cite{Goldring-Koskivirta-GS-cone}).


Let $(\mathbf{G},\mathbf{X})$ be a Shimura datum of Hodge-type, in the sense of \cite{Deligne-Shimura-varieties}. In particular, $\mathbf{G}$ is a connected, reductive group over $\QQ$. If $K\subset \mathbf{G}(\AA_f)$ is a compact open subgroup, we denote by $\Sh_K(\mathbf{G},\mathbf{X})$ the attached Shimura variety of level $K$. It is an algebraic variety defined over a number field $\mathbf{E}$, called the reflex field of the Shimura datum. For example, the Siegel-type Shimura variety $\Ascr_{n,K}$ is a fundamental example of a Shimura variety, it parametrizes principally polarized abelian varieties of rank $n$ endowed with a $K$-level structure. Let $p$ be a prime of good reduction for $(\mathbf{G},\mathbf{X})$. By this we mean that the group $\mathbf{G}_{\QQ_p}$ is unramified at $p$, and that $K$ is of the form $K=K_pK^p$ with $K_p\subset \mathbf{G}(\QQ_p)$ hyperspecial and $K^p\subset \mathbf{G}(\AA_f^p)$ open compact. In particular, we can write $K_p=\mathbf{G}_{\ZZ_p}(\ZZ_p)$ for a connected reductive $\ZZ_p$-model $\mathbf{G}_{\ZZ_p}$ of $\mathbf{G}_{\QQ_p}$. By work of Kisin (\cite{Kisin-Hodge-Type-Shimura}) and Vasiu (\cite{Vasiu-Preabelian-integral-canonical-models}), $\Sh_K(\mathbf{G},\mathbf{X})$ admits a smooth canonical model $\Sscr_K$ over $\Ocal_{\mathbf{E}_\pfr}$ for any place $\pfr|p$ of $\mathbf{E}$. We denote by $S_K\colonequals \Sscr_K\otimes_{\Ocal_{\mathbf{E}_\pfr}} \overline{\FF}_p$ its special fiber.

Let $\mu\colon \GG_{\mathrm{m},\CC}\to \mathbf{G}_\CC$ be the cocharacter deduced from the Shimura datum (see section \ref{sec-GZip-HT}). It defines a parabolic subgroup $\mathbf{P}\subset \mathbf{G}_\CC$ such that the centralizer of $\mu$ is a Levi subgroup $\mathbf{L}\subset \mathbf{P}$. Choose a Borel subgroup $\mathbf{B}\subset \mathbf{P}$ and a maximal torus $\mathbf{T}\subset \mathbf{B}$. Let $\Phi^+$ denote the positive $\mathbf{T}$-roots with respect to $\mathbf{B}$, and $\Delta\subset \Phi^+$ the simple roots. Write $I\colonequals \Delta_{\mathbf{L}}\subset \Delta$ for the simple roots of $\mathbf{L}$ and $\Phi^+_{\mathbf{L}}$ for the positive roots of $\mathbf{L}$. For any character $\lambda\in X^*(\mathbf{T})$, there is an automorphic vector bundle $\Vcal_I(\lambda)$ on $\Sscr_K$, modeled on the induced representation $\mathbf{V}_I(\lambda)\colonequals \Ind_{\mathbf{B}}^{\mathbf{P}}(\lambda)$ (see section \ref{aut-VB-sec} for details). For any $\Ocal_{\mathbf{E}_\pfr}$-algebra $R$, we call elements of $H^0(\Sscr_K\otimes_{\Ocal_{\mathbf{E}_\pfr}} R, \Vcal_I(\lambda))$ automorphic forms of level $K$ and weight $\lambda$ with coefficients in $R$.

In the papers \cite{Goldring-Koskivirta-global-sections-compositio, Goldring-Koskivirta-divisibility}, Goldring and the author studied the set:
\begin{equation}\label{CKR-intro-eq}
    C_{K}(R)\colonequals \left\{ \lambda\in X^*(\mathbf{T}) \ \relmiddle| \  H^0(\Sscr_K\otimes_{\Ocal_{\mathbf{E}_\pfr}} R, \Vcal_I(\lambda))\neq 0\right\}.
\end{equation}
It is an additive submonoid (i.e. a "cone") inside $X^*(\mathbf{T})$. For a cone $C\subset X^*(\mathbf{T})$, we define the saturation of $C$ as the subset of $\lambda\in X^*(\mathbf{T})$ such that some positive multiple of $\lambda$ lies in $C$. We always write the saturation with a calligraphic letter $\Ccal$. The saturation of $C_K(R)$ is independent of $K$ (\cite[Corollary 1.5.3]{Koskivirta-automforms-GZip}), so we simply denote it by $\Ccal(R)$. When $R=\CC$ we proved in \cite{Goldring-Koskivirta-GS-cone} that $\Ccal(\CC)$ is contained in the Griffiths--Schmid cone, defined by
\begin{equation}
\Ccal_{\GS}=\left\{ \lambda\in X^{*}(\mathbf{T}) \ \relmiddle| \ 
\parbox{6cm}{
$\langle \lambda, \alpha^\vee \rangle \geq 0 \ \textrm{ for }\alpha\in I, \\
\langle \lambda, \alpha^\vee \rangle \leq 0 \ \textrm{ for }\alpha\in \Phi^+ \setminus \Phi^+_{\mathbf{L}}$}
\right\}.
\end{equation}
It is expected that $\Ccal(\CC)  = \Ccal_{\GS}$ (this equality seems to be well-known to experts).

In this paper, we are interested in the case when $R=\overline{\FF}_p$. In this case, there is no simple, general description of $\Ccal(\overline{\FF}_p)$ in terms of the root datum. We conjectured that the cone $\Ccal(\overline{\FF}_p)$ is entirely determined by the stack of $G$-zips $\GZip^\mu$, introduced by Moonen--Wedhorn (\cite{Moonen-Wedhorn-Discrete-Invariants}) and Pink--Wedhorn--Ziegler (\cite{Pink-Wedhorn-Ziegler-zip-data, Pink-Wedhorn-Ziegler-F-Zips-additional-structure}). Here $G$ denotes the $\FF_p$-reductive group $G\colonequals \mathbf{G}_{\ZZ_p}\otimes_{\ZZ_p}\FF_p$. The stack $\GZip^\mu$ is a smooth, finite stack over $\overline{\FF}_p$. By a result of Zhang (\cite{Zhang-EO-Hodge}), there is a natural smooth (surjective) map
\begin{equation}
    \zeta\colon S_K\to \GZip^\mu.
\end{equation}
The vector bundles $\Vcal_I(\lambda)$ also exist naturally on the stack $\GZip^\mu$. We define a cone $C_{\zip}\subset X^*(\mathbf{T})$ as the set of $\lambda\in  X^*(\mathbf{T})$ such that $H^0(\GZip^\mu,\Vcal_I(\lambda))\neq 0$, similarly to $C_K(R)$. Pullback via $\zeta$ yields an inclusion $H^0(\GZip^\mu,\Vcal_I(\lambda))\subset H^0(S_K,\Vcal_I(\lambda))$, hence we have the containment $C_{\zip}\subset C_K(\overline{\FF}_p)$. Passing to the saturation, we also have $\Ccal_{\zip}\subset \Ccal(\overline{\FF}_p)$. We conjectured (\cite[Conjecture C]{Goldring-Koskivirta-global-sections-compositio}):
\begin{conj1}
For any Hodge-type Shimura variety, one has $\Ccal(\overline{\FF}_p)  = \Ccal_{\zip}$.
\end{conj1}
Goldring and the author proved this conjecture in \loccit for Hilbert--Blumenthal Shimura varieties, Picard modular surfaces at split primes and Siegel threefolds. We treated the case of unitary Shimura varieties attached to $\GU(r,s)$ for $r+s\leq 4$ as well as $\GSp_6$ in \cite{Goldring-Koskivirta-divisibility} (with the exception of $\GU(2,2)$ at at inert prime). Since the cone $\Ccal_{\zip}$ is in general very difficult to determine, Conjecture 1 does not give an explicit expression for $\Ccal(\overline{\FF}_p)$. However, in the paper \cite{Goldring-Koskivirta-GS-cone}, we gave a very sharp and explicit approximation (from above) for $\Ccal_{\zip}$. Hence, Conjecture 1 predicts that this should also provide an approximation of $\Ccal(\overline{\FF}_p)$. Assume for simplicity thay $G$ is split over $\FF_{p^2}$ and that $p$ is split in $\mathbf{E}$. Let $W_{\mathbf{L}}=W(\mathbf{L},\mathbf{T})$ be the Weyl group of $\mathbf{L}$. Note that $W_{\mathbf{L}} \rtimes \Gal(\FF_{p^2}/\FF_p)$ acts naturally on the set $\Phi^+\setminus \Phi^+_{\mathbf{L}}$. Using the approximation of $\Ccal_{\zip}$ given in \cite{Goldring-Koskivirta-GS-cone}, Conjecture 1 implies the following, much more explicit conjecture.

\begin{conj2}
Let $S_K$ be the special fiber of a Hodge-type Shimura variety at a prime $p$ of good reduction which splits in the reflex field $\mathbf{E}$. Assume that the attached reductive $\FF_p$-group $G$ is split over $\FF_{p^2}$. Then, if $f\in H^0(S_K,\Vcal_I(\lambda))$ is a nonzero automorphic form of weight $\lambda\in X^*(\mathbf{T})$, we have 
\begin{equation}
    \sum_{\alpha \in \Ocal\setminus S} \langle \lambda,\alpha^\vee \rangle \ + \ \frac{1}{p} \sum_{\substack{\alpha\in S}} \langle  \lambda, \alpha^\vee \rangle \leq 0
\end{equation}
for all $W_{\mathbf{L}}\rtimes \Gal(\FF_{p^2}/\FF_p)$-orbits $\Ocal\subset \Phi^+\setminus \Phi^+_{\mathbf{L}}$ and all subsets $S\subset \Ocal$.
\end{conj2}
The above inequalities give a very good approximation (from above) of $\Ccal_{\zip}$ (see \cite[Figure 1]{Goldring-Koskivirta-divisibility} for a visual illustration of these cones in the case $G=\GSp_{6}$). The assumption that $p$ is split in $\mathbf{E}$ implies that the ordinary locus of $S_K$ is non-empty. We proved that Conjecture 2 holds for unitary Shimura varieties of signature $(n-1,1)$ at split primes in \cite{Goldring-Koskivirta-GS-cone}. We prove in this paper that Conjecture 2 holds for Siegel-type Shimura varieties. However, we cannot prove the stronger Conjecture 1 for these varieties (except for $n\leq 3$), and we cannot even determine $\Ccal_{\zip}$ for $n\geq 4$. In the case of $\GSp_{2n}$, characters are parametrized by an $n+1$-tuple $\lambda=(a_1,\dots,a_n,b)\in \ZZ^{n+1}$ satisfying $\sum_{i=1}^n a_i \equiv b \pmod{2}$ (where the natural multiplier of $\GSp_{2n}$ corresponds to $(0,\dots,0,2)$). Concretely, we show the following:

\begin{thm1}[Theorem \ref{main-thm}]\label{th-CF-Min-intro}
Conjecture 2 holds for good reduction Siegel-type Shimura varieties $\Ascr_{n,K}$. Specifically, if $f\in H^0(\Ascr_{n,K}\otimes_{\ZZ_p} \overline{\FF}_p, \Vcal_I(\lambda))$ is a nonzero automorphic form with coefficients in $\overline{\FF}_p$ of weight $\lambda=(a_1,\dots,a_n,b)$, we have: 
\begin{equation}\label{ineq-thm-intro} \tag{1}
    \sum_{i=1}^j a_i + \frac{1}{p} \sum_{i=j+1}^{n} a_i \leq 0 \quad \textrm{ for all }j=1,\dots, n.
\end{equation}
\end{thm1}

Equivalently, if for any $j\in \{1,\dots, n\}$ the inequality \eqref{ineq-thm-intro} is not satisfied, then the space $H^0(\Ascr_{n,K}\otimes_{\FF_p} \overline{\FF}_p, \Vcal_I(\lambda))$ is zero. Therefore, we may view the above result as a vanishing theorem for the cohomology of Shimura varieties in degree $0$. To prove Theorem 1, we use the flag space $\Flag(\Ascr_{n,K})$ of $\Ascr_{n,K}$, introduced by Ekedahl--van der Geer in \cite{Ekedahl-Geer-EO}. It is a moduli space which parametrizes pairs $(x,\Fcal_\bullet)$, where $x$ is a point of $\Ascr_{n,K}$ and $\Fcal_\bullet$ is a full symplectic flag in the de Rham cohomology of the abelian variety at $x$ which refines the Hodge filtration. There is a natural projection map
\begin{equation}
    \pi\colon \Flag(\Ascr_{n,K}) \to \Ascr_{n,K}, \qquad (x,\Fcal_\bullet)\mapsto x.
\end{equation}
For any character $\lambda \in X^*(\mathbf{T})$, there is a line bundle $\Vcal_{\flag}(\lambda)$ on $\Flag(\Ascr_{n,K})$ such that $\pi_*(\Vcal_{\flag}(\lambda))=\Vcal_I(\lambda)$. In particular, automorphic forms of weight $\lambda$ coincide with global sections of $\Vcal_{\flag}(\lambda)$ on the flag space. The special fiber $\Flag_K \colonequals \Flag(\Ascr_{n,K})\otimes_{\ZZ_p} \overline{\FF}_p$ admits a stratification $(\Flag_{K,w})_{w\in W}$ where $w$ varies in the Weil group $W$ of $G=\GSp_{2n}$. Write $\overline{\Flag}_{K,w}$ for the Zariski closure of $\Flag_{K,w}$. Set $w_{\max}=w_{0,I}w_0$, where $w_0$ and $w_{0,I}$ are the longest elements of $W$ and $W_{\mathbf{L}}$ respectively. The stratum $\Flag_{K,w_{\max}}$ plays a special role. It has the property that $\pi$ restricts to a finite etale map from $\Flag_{K,w_{\max}}$ onto the ordinary locus of $\Ascr_{n,K}\otimes_{\ZZ_{(p)}} \overline{\FF}_p$. The following theorem is the first step in the proof of Theorem 1.

\begin{thm2}[Theorem \ref{thm-Hasse-reg-wmax}]
Let $f\in H^0(\overline{\Flag}_{K,w_{\max}}, \Vcal_{\flag}(\lambda))$ be a nonzero section, where $\lambda=(a_1,\dots,a_n,b)$. Then we have $a_i \leq 0$ for all $i=1, \dots, n$.
\end{thm2}

The above result is proved by embedding Hilbert--Blumenthal Shimura varieties (attached to a totally real field $\mathbf{F}$ where $p$ splits) into $\Ascr_{n,K}$. This embedding lifts naturally to an embedding into the flag space of $\Ascr_{n,K}$, and on the special fiber the image is contained in the Zariski closure $\overline{\Flag}_{K,w_{\max}}$. Since Conjecture 1 holds for Hilbert--Blumenthal varieties by previous results, we deduce that if $a_i>0$ for some $1\leq i\leq n$, then any section $f\in H^0(\overline{\Flag}_{K,w_{\max}}, \Vcal_{\flag}(\lambda))$ must vanish on the image of this embedding. The final step to prove Theorem 2 is to use Hecke operators to show that $f$ vanishes everywhere. For this, define for $w\in W$ a set $\BB_{K,w}(\lambda)$ as the stable base locus of the line bundle $\Vcal_{\flag}(\lambda)$ restricted to $\overline{\Flag}_{K,w}$ (the stable base-locus is the intersection of the base loci of all positive powers of $\Vcal_{\flag}(\lambda)$). We show:
\begin{thm3}[Theorem \ref{B-hecke-stable}]
The set $\BB_{K,w}(\lambda)\subset \Flag_{K}$ is stable by prime-to-$p$ Hecke operators.
\end{thm3}
This makes it possible to apply a well-known Theorem of Chai on the density of prime-to-$p$ Hecke orbits of ordinary points to terminate the proof of Theorem 2. Lastly, to prove Theorem 1, we use the flag stratum $\Flag_{K,w_{\max}}$ as a starting point and propagate the inequalities afforded by Theorem 2 using an appropriately chosen increasing sequence of elements in $W$, until we reach the maximal element $w_0$. This last step is similar to the method used in \cite{Goldring-Koskivirta-GS-cone} in the case of unitary groups.

\vspace{0.8cm}

\noindent
{\bf Acknowledgements.}
This work was supported by JSPS KAKENHI Grant Number 21K13765. We would like to thank Wushi Goldring for useful discussions on the topic of this paper. We are also very grateful to Christophe Cornut for many useful comments.

\section{Shimura varieties}

\subsection{Siegel-type Shimura varieties} \label{Siegel-type-sec}

Let $n\geq 1$ be an integer and let $\Lambda=\ZZ^{2n}$, endowed with the symplectic pairing $\Psi\colon \Lambda\times \Lambda \to \ZZ$ defined by the $2n\times 2n$ symplectic matrix
\begin{equation}
   \left( \begin{matrix}
        & J_0 \\ -J_0 &
    \end{matrix} \right), \quad  \textrm{where} \quad \quad J_0=
    \left( \begin{matrix}
        & &1 \\  & \iddots & \\ 1& &
    \end{matrix} \right) \in \GL_n(\ZZ).
\end{equation}
Let $\GSp_{2n}$ be the reductive group over $\ZZ$ such that for any $\ZZ$-algebra $R$, we have
\begin{equation}
    \GSp_{2n}(R)=\left\{ g\in \GL_R(\Lambda \otimes_\ZZ R) \ \relmiddle| \ \exists c(g)\in R^\times, \ \Psi_R(gx,gy) = c(g) \Psi_R(x,y) \right\}.
\end{equation}
The map $\GSp_{2n}\to \GG_{\mathrm{m}}$, $g\mapsto c(g)$ is the multiplier character. The subgroup defined by the condition $c(g)=1$ is denoted by $\Sp_{2n}$. Let $\Hcal_n^{\pm}$ be the set of symmetric complex matrices of size $n\times n$ whose imaginary part is positive definite or negative definite. The pair $(\GSp_{2n,\QQ},\Hcal_{n}^{\pm})$ is called a Shimura datum of Siegel-type.

We fix a prime number $p$ and set $K_p\colonequals \GSp_{2n}(\ZZ_p)$. It is a hyperspecial subgroup of $\GSp_{2n}(\QQ_p)$. For any open compact subgroup $K^p\subset \GSp_{2n}(\AA^p_f)$, write $K=K_p K^p \subset \GSp_{2n}(\AA_f)$ and let $\Ascr_{n,K}$ be stack over $\ZZ_{(p)}$ such that for any $\ZZ_{(p)}$-scheme $S$, the $S$-valued points of $\Ascr_{n,K}$ parametrize the triples $(A,\lambda, \overline{\eta})$ satisfying the following conditions:
\begin{bulletlist}
    \item $A$ is an abelian scheme of relative dimension $n$ over $S$.
    \item $\lambda\colon A\to A^\vee$ is a $\ZZ_{(p)}$-multiple of a polarization whose degree is prime to $p$.
    \item $\eta\colon \Lambda \otimes_{\ZZ} \AA^p_f \to H^1(A,\AA^p_f)$ is an isomorphism of sheaves of $\AA^p_f$-modules on $S$. We impose that $\eta$ is compatible with the symplectic pairings induced by $\Psi$ and $\lambda$. We write $\overline{\eta}=\eta K^p$ for the $K^p$-orbit of $\eta$.
\end{bulletlist}
For $K^p$ small enough, $\Ascr_{n,K}$ is a smooth, quasi-projective $\ZZ_{(p)}$-scheme of relative dimension $\frac{n(n+1)}{2}$. We will always make this assumption on $K^p$. We are interested in the geometry of the special fiber $\Ascr_{n,K}\otimes_{\ZZ_{(p)}} \FF_p$. We fix an algebraic closure $k=\overline{\FF}_p$ and define:
\begin{equation}
    \overline{\Ascr}_{n,K} \colonequals \Ascr_{n,K}\otimes_{\ZZ_{(p)}} k.
\end{equation}


\subsection{Hodge-type Shimura varieties}\label{sec-HT-SV}

More generally, let $\gx$ be a Shimura datum of Hodge-type \cite[2.1.1]{Deligne-Shimura-varieties}. Recall that this means that there is an embedding of Shimura data $\gx\to (\GSp_{2n,\QQ},\Hcal_n^{\pm})$ for some $n\geq 1$. Here $\mathbf{G}$ is a connected, reductive group over $\QQ$. The symmetric domain $\mathbf{X}$ gives rise to a well-defined $\mathbf{G}(\overline{\QQ})$-conjugacy class of cocharacters $[\mu]$ of $\mathbf{G}_{\overline{\QQ}}$. Write $\mathbf{E}=\mathbf{E}(\mathbf{G},\mathbf{X})$ for the reflex field of $\gx$ (i.e. the field of definition of $[\mu]$). For any open compact subgroup $K \subset \mathbf{G}(\AA_f)$, let $\Sh_{K}(\mathbf{G},\mathbf{X})$ be Deligne's canonical model (\cite{Deligne-Shimura-varieties}) at level $K$ defined over $\mathbf{E}$. When $K$ is small enough, $\Sh_{K}(\mathbf{G},\mathbf{X})$ is a smooth, quasi-projective scheme over $\mathbf{E}$, and its $\CC$-valued points are given by
\begin{equation}
    \Sh_{K}(\mathbf{G},\mathbf{X})(\CC) = \mathbf{G}(\QQ) \backslash \left(\mathbf{G}(\AA_f)/K \times \mathbf{X} \right).
\end{equation}
Let $p$ be a prime of good reduction. By this, we mean that $K$ can be written in the form $K=K_p K^p$ where $K_p\subset \mathbf{G}(\QQ_p)$ is hyperspecial and $K^p\subset \mathbf{G}(\AA_f^p)$ is open compact. We will only consider open compact subgroups of this form. In particular, the group $\mathbf{G}_{\QQ_p}$ is unramified and there exists a connected reductive $\ZZ_p$-model $\mathbf{G}_{\ZZ_p}$ such that $K_p=\mathbf{G}_{\ZZ_p}(\ZZ_p)$. For any place $\pfr$ above $p$ in $\mathbf{E}$, Kisin (\cite{Kisin-Hodge-Type-Shimura}) and Vasiu (\cite{Vasiu-Preabelian-integral-canonical-models}) constructed a smooth canonical model $\Sscr_K$ of $\Sh_{K}(\mathbf{G},\mathbf{X})$ over $\Ocal_{\mathbf{E}_\pfr}$. The embedding $\gx\to (\GSp_{2n,\QQ},\Hcal_n^{\pm})$ induces an embedding $\Sh_K(\mathbf{G},\mathbf{X})\to \Ascr_{n,K_0}\otimes_{\ZZ_{(p)}}\mathbf{E}_{\pfr}$ for a compatible choice of open compact subgroups $K^p_0\subset \GSp_{2n}(\AA^p_f)$, $K^p\subset \mathbf{G}(\AA_f^p)$. By \cite[Theorem 2.4.8]{Kisin-Hodge-Type-Shimura}, $\Sscr_K$ is the normalization of the scheme-theoretical image of $\Sh_K(\mathbf{G},\mathbf{X})$ inside $\Ascr_{n,K_0}$. Put
\begin{equation}
    S_{K}\colonequals \Sscr_K\otimes_{\Ocal_{\mathbf{E}_\pfr}} k.
\end{equation}
Let $K'^p\subset K^p$ be two compact open subgroups of $\mathbf{G}(\AA_f^p)$. There is a natural projection morphism
\begin{equation}\label{change-level-pi}
    \pi_{K',K}\colon \Sscr_{K'}\to \Sscr_K
\end{equation}
which is finite etale. We call $\pi_{K',K}$ the change-of-level map.

\subsection{Hecke operators}

We review the definition of Hecke operators on $\Sscr_{K}$. For any $g\in \mathbf{G}(\AA_f^p)$, consider the compact open subgroup $K'=K\cap gKg^{-1}$. There are two natural maps $\Sscr_{K\cap g K g^{-1}}\to \Sscr_{K}$. The first one is the natural change-of-level map $\pi_{K',K}$ defined in \eqref{change-level-pi}. The second one is induced on $\CC$-points by the map
\begin{equation}
    \mathbf{G}(\AA_f)/K' \to \mathbf{G}(\AA_f)/K, \quad x K' \mapsto gx K.
\end{equation}
We denote the induced map simply by $g\colon \Sscr_{K\cap g K g^{-1}}\to \Sscr_K$. This construction yields a correspondance with finite etale maps

\begin{equation}\label{hecke-diag}
    \xymatrix{
& \Sscr_{K\cap g K g^{-1}} \ar[dl]_{\pi}  \ar[dr]^{g} & \\
\Sscr_{K} &  & \Sscr_{K}
}
\end{equation}
The Hecke algebra $\TT_{K^p}(\mathbf{G})$ (with coefficients in $\ZZ$) attached to $K^p$ is the ring of $K^p$-bi-invariant, locally constant functions $\mathbf{G}(\AA_f^p)\to \ZZ$ with compact support. Any such function can be written as a sum of characteristic functions of double cosets $K^p g K^p$ ($g\in \mathbf{G}(\AA_f^p)$). Multiplication is defined by convolution with respect to the left-Haar measure $\nu$ normalized by $\nu(K^p)=1$. The algebra $\TT_{K^p}(\mathbf{G})$ acts on several natural objects related to $\Sscr_{K}$ via the correspondance \eqref{hecke-diag}. We will only consider the action on 0-cycles on the special fiber of $\Sscr_{K}$. Concretely, if $x\in S_{K}(k)$ and $g\in \mathbf{G}(\AA_f^p)$, we define
\begin{equation}\label{TKp}
    T_{K^p,g}(x) = g_{*} \pi^*(x).
\end{equation}
We view $T_{K^p,g}(x)$ as a formal sum of points of $S_{K}(k)$. We denote by $\Hcal^p(x)$ the set of all points of $S_{K}(k)$ appearing in the support of a 0-cycle of the form $T_{K^p,g}(x)$ for $g\in \mathbf{G}(\AA_f^p)$. We call $\Hcal^p(x)$ the prime-to-$p$ Hecke orbit of $x$. Denote by $S_K^{\ord}$ the ordinary locus of $S_K$, i.e. the set of points $x\in S_K$ whose image in $\Ascr_{n,K_0}\otimes_{\ZZ_{(p)}} k$ corresponds to an ordinary abelian variety. $S^{\ord}_{K}$ is nonempty if and only if $\mathbf{E}_{\pfr}=\QQ_p$. By \cite[Theorem I]{van-hoften-ordinary-hecke}, one has the following theorem:

\begin{theorem}\label{thm-Hx-dense}
Assume that $\mathbf{E}_{\pfr}=\QQ_p$. For any point $x\in S_K^{\ord}$, the set $\Hcal^p(x)$ is Zariski dense in $S_{K}$.
\end{theorem}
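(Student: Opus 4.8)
This is the ordinary Hecke orbit conjecture for the Hodge-type Shimura variety $S_K$, and the hypothesis $\mathbf{E}_\pfr=\QQ_p$ is precisely what guarantees $S_K^{\ord}\neq\emptyset$. The plan is to follow the Chai--Oort strategy, in the form carried out by van Hoften, splitting the problem into a \emph{local} rigidity statement at the ordinary point $x$ and a \emph{global} monodromy statement on the ordinary locus. Set $Z\colonequals\overline{\Hcal^p(x)}$, a closed reduced subscheme of $S_K$ containing $x$; since $\Hcal^p(x)$ is stable under the Hecke correspondences \eqref{hecke-diag} and these are finite, $Z$ is Hecke-stable as well. As the Hecke correspondences act transitively on the set of geometric connected components of $S_K$ (strong approximation for $\bfgder$), it suffices to prove that $Z$ contains the connected component $S_K^\circ$ through $x$; and since $S_K^\circ$ is irreducible, this in turn reduces to showing $\dim Z=\dim S_K^\circ$.

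For the local input I would pass to the formal completion $\widehat{S}_{K,x}$ at the ordinary point $x$. Since $x$ is ordinary, the associated abelian scheme (via the Siegel embedding) admits a canonical lift, and $\widehat{S}_{K,x}$ inherits a \emph{Serre--Tate formal torus} structure compatible with the Hodge tensors carried by the universal abelian scheme: it is, non-canonically, a formal torus over $k$ with origin $x$. The prime-to-$p$ Hecke operators fixing $x$ act on $\widehat{S}_{K,x}$ through a sufficiently large group of homotheties of this torus, so Chai's rigidity theorem for formal tori (a reduced closed formal subscheme stable under enough homotheties is a formal subtorus) shows that $\widehat{Z}_x$ is itself a formal subtorus. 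In particular $Z$ is formally smooth at $x$, of dimension the rank of $\widehat{Z}_x$.

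For the global input I would invoke the largeness of the prime-to-$p$ monodromy of $S_K^{\ord}$: the image of $\pi_1(S_K^{\ord,\circ})$ acting on the prime-to-$p$ Tate module of the universal abelian scheme is open in the relevant reductive group, by the Hodge-type generalization of Chai's symplectic monodromy theorem (Chai in general, with the Hilbert case already in Deligne--Ribet, and the full statement in \cite{van-hoften-ordinary-hecke}). Since $Z^\circ$ is Hecke-stable, $Z^\circ\cap S_K^{\ord}$ is a nonempty open Hecke-stable subset of $S_K^{\ord}$; the large monodromy forbids it from being ``thin'' in the horizontal directions, and matching this with the toral dimension computed at $x$ in the previous paragraph gives $\dim Z^\circ=\dim S_K^\circ$, whence $Z^\circ=S_K^\circ$ and therefore $Z=S_K$. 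Heuristically, the monodromy produces the horizontal tangent directions and the Serre--Tate torus the vertical ones, and together they span the full tangent space of $S_K$ at $x$.

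The main obstacle is the interface between the two inputs in the Hodge-type generality: making precise the Serre--Tate / canonical-coordinate structure on $\widehat{S}_{K,x}$ for a general Hodge-type integral model, whose universal abelian scheme carries Hodge tensors that must be preserved by the construction, and proving the openness of the prime-to-$p$ monodromy image in that setting, constitute the technical core of \cite{van-hoften-ordinary-hecke}. For the purposes of the present paper one needs only the Siegel case $\mathbf{G}=\GSp_{2n}$ --- which is Chai's original theorem that every ordinary isogeny class is dense in $\Ascr_{n,K}$ --- and the Hilbert--Blumenthal case attached to a totally real field split at $p$; in both, the classical Serre--Tate coordinates and the classical monodromy theorems of Chai and Deligne--Ribet already suffice, so one may either cite \cite{van-hoften-ordinary-hecke} directly or assemble the statement from these classical ingredients.
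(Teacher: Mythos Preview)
The paper does not prove this theorem at all: it is stated with the preamble ``By \cite[Theorem I]{van-hoften-ordinary-hecke}, one has the following theorem'' and followed by the remark that the Siegel case was first proved by Chai. In other words, Theorem~\ref{thm-Hx-dense} is treated as a black-box external input, not as something to be established within the paper.

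Your proposal, by contrast, sketches the actual Chai--Oort strategy underlying van Hoften's proof: Hecke-stability of the closure $Z$, Serre--Tate formal torus structure at ordinary points combined with Chai's rigidity to force $\widehat{Z}_x$ to be a formal subtorus, and a large prime-to-$p$ monodromy statement to rule out $Z$ being a proper subvariety. This is a reasonable high-level outline of the ingredients, and your closing observation is exactly right: for the purposes of this paper only the Siegel case (Chai) and the split Hilbert--Blumenthal case are needed, and both are classical. So there is no mathematical disagreement; you have simply unpacked a citation that the paper leaves packed.
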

In the case of Siegel-type Shimura varieties, Theorem \ref{thm-Hx-dense} was first proved by Chai in \cite{Chai-ordinary-isogeny-invent-math}.

\subsection{Hilbert--Blumenthal Shimura varieties} \label{hb-var-sec}

In this section, we recall the definition of Hilbert--Blumenthal Shimura varieties (also known as Hilbert modular varieties). Let $\mathbf{F}/\QQ$ be a totally real extension of degree $n$. We denote by $\mathbf{H}$ the reductive $\QQ$-group whose $R$-points (for any $\QQ$-algebra $R$) are defined by
\begin{equation}
    \mathbf{H}(R)=\left\{ g\in \GL_2(\mathbf{F}\otimes_\QQ R) \ \relmiddle| \ \det(g)\in R^\times \right\}.
\end{equation}
It is a subgroup of the Weil restriction $\Res_{\mathbf{F}/\QQ}(\GL_{2,\mathbf{F}})$. We assume that $p$ is unramified in $\mathbf{F}$. In this case, the group $\mathbf{H}$ is unramified at $p$ and the lattice $\Ocal_{\mathbf{F}}\otimes_{\ZZ}\ZZ_p \subset \mathbf{F}\otimes_{\QQ}\QQ_p$ yields a reductive $\ZZ_p$-model $\mathbf{H}_{\ZZ_p}$ of $\mathbf{H}_{\QQ_p}$. We set $K'_p\colonequals \mathbf{H}_{\ZZ_p}(\ZZ_p)$, it is a hyperspecial subgroup of $\mathbf{H}(\QQ_p)$. For any open compact subgroup $K'^p \subset \mathbf{H}(\AA_f^p)$, we write $K'\colonequals K'_p K'^{p}$. The Hilbert--Blumenthal Shimura variety $\Hscr_{\mathbf{F},K'}$ of level $K'$ is the $\ZZ_{(p)}$-stack whose $S$-valued points (for any $\ZZ_{(p)}$-scheme $S$) parametrize quadruples $(A,\lambda, \iota, \overline{\eta})$ satisfying the following conditions:
\begin{bulletlist}
    \item $A$ is an abelian scheme of relative dimension $n$ over $S$.
    \item $\lambda\colon A\to A^\vee$ is a $\ZZ_{(p)}$-multiple of a polarization of degree prime to $p$.
    \item $\iota\colon \Ocal_\mathbf{F} \otimes_\ZZ \ZZ_{(p)} \to \End(A)\otimes_\ZZ \ZZ_{(p)}$ is a ring homomorphism.
    \item $\eta\colon (\mathbf{F} \otimes_{\QQ} \AA^p_f)^2 \to H^1(A,\AA^p_f)$ is an $\mathbf{F}$-linear isomorphism of sheaves of $\AA^p_f$-modules on $S$, and $\overline{\eta}=\eta K'^p$ is the $K'^p$-orbit of $\eta$.
\end{bulletlist}
The homomorphism $\iota$ yields an action of $\Ocal_{\mathbf{F}}$ on the dual abelian variety $A^\vee$. We impose that the polarization $\lambda$ be $\Ocal_\mathbf{F}$-linear for this action.

Next, we construct a morphism $\Hscr_{\mathbf{F},K'}\to \Ascr_{n,K}$ for an appropriate choice of level structures $K'^p\subset \mathbf{H}(\AA_f^p)$ and $K^p\subset \GSp(\AA_f^p)$. Write $\mathbf{G}\colonequals \GSp_{2n,\QQ}$ and $\mathbf{G}_{\ZZ_p}=\GSp_{2n,\ZZ_p}$. First, we construct an embedding $\mathbf{H} \to \mathbf{G}$ of reductive $\QQ$-groups. Consider the symplectic form
\begin{equation}
    \Psi_{0}\colon \mathbf{F}^2\times \mathbf{F}^2\to \QQ, \quad ((x_1,y_1),(x_2,y_2)) \ \mapsto \ \Tr_{\mathbf{F}/\QQ}(x_1y_2-x_2y_1).
\end{equation}
For any matrix $A\in \GL_2(\mathbf{F})$ such that $\det(A)\in \QQ^\times$ and $x,y\in \mathbf{F}^2$, one has
\begin{equation}
    \Psi_0(Ax,Ay)=\det(A)\Psi_0(x,y).
\end{equation}
Hence, we have an inclusion $\mathbf{H}\subset \GSp(\Psi_0)$. Fix an isomorphism $\gamma\colon (\mathbf{F}^2,\Psi_0)\to (\Lambda\otimes_{\ZZ}\QQ, \Psi)$ of symplectic spaces over $\QQ$. We obtain an embedding of reductive $\QQ$-groups
\begin{equation}\label{u-embedQ}
u\colon \mathbf{H} \to \mathbf{G}, \quad f\mapsto \gamma \circ f \circ \gamma^{-1}.
\end{equation}
Since $p$ is unramified in $\mathbf{F}$, we may further assume that $\gamma$ induces an isomorphism $\Ocal_{\mathbf{F}}\otimes_{\ZZ}\ZZ_p\to \Lambda\otimes_{\ZZ} \ZZ_p$. Then, $u_{\QQ_p}$ extends to an embedding $u_{\ZZ_p}\colon \mathbf{H}_{\ZZ_p} \to \mathbf{G}_{\ZZ_p}$ of reductive $\ZZ_{p}$-groups. For any compact open subgroups $K'^p\subset \mathbf{H}(\AA_f^p)$ and $K^p\subset \mathbf{G}(\AA_f^p)$ such that $u(K'^p)\subset K^p$, there is a natural morphism of $\ZZ_{(p)}$-schemes
\begin{equation}\label{embed-HB-S}
\widetilde{u}_{K',K} \colon \Hscr_{\mathbf{F},K'} \to \Ascr_{n,K}
\end{equation}
defined as follows. Let $x=(A,\lambda,\iota,\overline{\eta})$ be a point of $\Hscr_{\mathbf{F},K'}$, where $\overline{\eta}=\eta K'^p$ and $\eta\colon (\mathbf{F} \otimes_{\QQ} \AA^p_f)^2 \to H^1(A,\AA^p_f)$ is an $\mathbf{F}$-linear isomorphism. Then $\widetilde{u}_{K',K}$ sends $x$ to the point $(A,\lambda, (\eta\circ \gamma^{-1})K^p)$ of $\Ascr_{n,K}$ (the isomorphism $\eta\circ \gamma^{-1}\colon \Lambda\otimes_{\ZZ} \AA^p_f \to H^1(A,\AA^p_f)$ is compatible with the symplectic forms, and thus gives rise to a level structure). It is possible to choose $K', K$ so that $\widetilde{u}_{K',K}$ is a closed embedding, but we will not need this.

To conclude this section, we consider the case when $p$ splits in $\mathbf{F}$. If we let $\pfr_1,\dots,\pfr_n$ be the prime ideals of $\mathbf{F}$ dividing $p$, we get an identification 
\begin{equation}
\Ocal_{\mathbf{F}}\otimes_{\ZZ} \FF_p \simeq \prod_{i=1}^n \Ocal_{\mathbf{F}}/\pfr_i \simeq \FF_p^n.   
\end{equation}
Write $H\colonequals \mathbf{H}_{\ZZ_p}\otimes_{\ZZ_p}\FF_p$ and $G\colonequals \mathbf{G}_{\ZZ_p}\otimes_{\ZZ_p}\FF_p$ for the special fibers. We obtain an isomorphism
\begin{equation}
    H\simeq \{(A_1,\dots,A_n)\in \GL_{2,\FF_p} \ | \ \det(A_1)=\dots=\det(A_n)\}.
\end{equation}
Furthermore, in an appropriate basis the embedding $u\colon H\to G$ identifies with the following map.
\begin{equation}\label{emb-split}
    u \colon \left(
\left(
\begin{matrix}
    a_1&b_1\\ c_1&d_1
\end{matrix}
\right),
\cdots , 
\left(
\begin{matrix}
    a_n&b_n\\ c_n&d_n
\end{matrix}
\right)
    \right) \mapsto 
    \left(
\begin{matrix}
a_1 & & & & & & & b_1 \\
&a_2 & & & & & b_2&  \\
& & \ddots& & &\iddots & & \\
& & & a_n &b_n & & & \\
& & & c_n& d_n& & & \\
& &\iddots & & &\ddots & & \\
& c_2& & & & &d_2 & \\
c_1 & & & & & & &d_1 \\
\end{matrix}
    \right).
\end{equation}

\section{\texorpdfstring{The stack of $G$-zips}{}}\label{subsec-GZip}

The stack of $G$-zips was introduced by Moonen--Wedhorn (\cite{Moonen-Wedhorn-Discrete-Invariants}) and Pink--Wedhorn--Ziegler (\cite{Pink-Wedhorn-Ziegler-zip-data, Pink-Wedhorn-Ziegler-F-Zips-additional-structure}). We start be recalling its definition.

\subsection{Definition}\label{sec-GZip-general}
Fix an algebraic closure $k=\overline{\FF}_p$ of $\FF_p$. For a $k$-scheme $X$, we denote by $X^{(p)}$ its $p$-th power Frobenius twist and by $\varphi\colon X\to X^{(p)}$ its relative Frobenius. Let $G$ be a connected, reductive group over $\FF_p$ endowed with a cocharacter $\mu\colon \GG_{\mathrm{m},k}\to G_k$. We call the pair $(G,\mu)$ a cocharacter datum. The cocharacter $\mu$ induces a decomposition $\Lie(G)=\bigoplus_{n\in \ZZ}\Lie(G)_n$, where $\Lie(G)_n$ is the subspace where $\GG_{\mathrm{m},k}$ acts by the character $x\mapsto x^n$ via $\mu$. We obtain a pair of opposite parabolic subgroups $P_{\pm}\subset G_k$, defined by the conditions
\begin{equation}\label{Pminplus}
    \Lie(P_-)=\bigoplus_{n\leq 0} \Lie(G)_n \quad \textrm{and} \quad \Lie(P_+)=\bigoplus_{n\geq 0} \Lie(G)_n.
\end{equation}
We set $P\colonequals P_-$ and $Q\colonequals P_+^{(q)}$. Let $L\colonequals \cent(\mu)$ be the centralizer of $\mu$, it is a Levi subgroup of $P$. Put $M\colonequals L^{(q)}$, which is a Levi subgroup of $Q$. We have a Frobenius map $\varphi\colon L\to M$. We call the tuple \[\Zcal_\mu \colonequals (G,P,Q,L,M)\]
the zip datum attached to $(G,\mu)$ (this terminology slightly differs from \cite[Definition 3.6]{Pink-Wedhorn-Ziegler-F-Zips-additional-structure}). If $L$ is defined over $\FF_p$, we have $M=L^{(p)}=L$. This will be the case for all zip data considered in this paper. Let $\theta_L^P\colon P\to L$ be the projection onto the Levi subgroup $L$ modulo the unipotent radical $R_{\mathrm{u}}(P)$ and define $\theta_M^Q\colon Q\to M$ similarly. The zip group is defined by
\begin{equation}\label{eq-Edef}
    E\colonequals\{ (x,y)\in P\times Q \mid \varphi(\theta^P_L(x))=\theta_M^Q(y) \}.
\end{equation}
Let $E$ act on $G_k$ by the rule $(x,y)\cdot g\colonequals xgy^{-1}$. The stack of $G$-zips of type $\mu$ is the quotient stack
\begin{equation}\label{GZip-def-eq}
    \GZip^\mu \colonequals \left[E\backslash G_k\right].
\end{equation}
It can also be defined as a moduli stack of certain torsors (\cite[Definition 1.4]{Pink-Wedhorn-Ziegler-F-Zips-additional-structure}).  The association $(G,\mu) \mapsto \GZip^\mu$ is functorial in the following sense. Let $(H,\mu_H)$ and $(G,\mu_G)$ be two cocharacter data and let $f\colon H\to G$ be a homomorphism defined over $\FF_p$ satisfying $\mu_G=f_k\circ \mu_H$. Then by \cite[\S2.1, \S2.2]{Goldring-Koskivirta-zip-flags}, $f$ induces a natural morphism of stacks $f_{\zip} \colon \HZip^{\mu_H}\to \GZip^{\mu_G}$ which makes the diagram below commute (the vertical maps are the natural projections).
\begin{equation}\label{GZip-functor}
\xymatrix@M=7pt{
H \ar[r]^{f} \ar[d] & G \ar[d] \\
\HZip^{\mu_H}\ar[r]_{f_{\zip}} & \GZip^{\mu_G}.
}
\end{equation}

\subsection{Parametrization of strata}\label{sec-param}

Let $(G,\mu)$ be a cocharacter datum, with attached zip datum $\Zcal_\mu=(G,P,Q,L,M)$. The stack $\GZip^\mu$ is a smooth stack over $k$ whose underlying topological space is finite. We review below the parametrization of the points of $\GZip^\mu$, following \cite{Pink-Wedhorn-Ziegler-zip-data}. We first fix some group-theoretical data. 
\begin{bulletlist}
\item For simplicity, we assume that there is a Borel pair $(B,T)$ defined over $\FF_p$ satisfying $B\subset P$, $T\subset L$ and such that $\mu$ factors through $T$ (this condition can always be achieved after possibly changing $\mu$ to a conjugate cocharacter). In particular, we have an action of $\Gal(k/\FF_p)$ on $X^*(T)$ and $X_*(T)$. Let $B^+$ denote the opposite Borel to $B$ with respect to $T$ (i.e. the only Borel subgroup such that $B\cap B^+=T$). We write $\sigma\in \Gal(k/\FF_p)$ for the $p$-power Frobenius automorphism $x\mapsto x^p$.
\item Let $W=W(G_k,T)$ be the Weyl group of $G_k$. The group $\Gal(k/\FF_p)$ acts on $W$ and the actions of $\Gal(k/\FF_p)$ and $W$ on $X^*(T)$ and $X_*(T)$ are compatible in a natural sense.
\item Write $\Phi$ for the set of $T$-roots, $\Phi^+$ for the positive roots with respect to $B$ (in our convention, a root $\alpha$ is positive if the corresponding $\alpha$-root group $U_\alpha$ is contained in the opposite Borel $B^+$). Let $\Delta$ denote the set of simple roots.
\item For $\alpha \in \Phi$, let $s_\alpha \in W$ be the corresponding reflection. The system $(W,\{s_\alpha \mid \alpha \in \Delta\})$ is a Coxeter system.
\item Write $\ell  \colon W\to \NN$ for the length function, and $\leq$ for the Bruhat order on $W$. Let $w_0$ denote the longest element of $W$. 
\item Write $I\subset \Delta$ for the set of simple roots contained in $L$.
\item For a subset $K\subset \Delta$, let $W_K$ denote the subgroup of $W$ generated by $\{s_\alpha \mid \alpha \in K\}$. Write $w_{0,K}$ for the longest element in $W_K$.
\item Let ${}^KW$ denote the subset of elements $w\in W$ which have minimal length in the coset $W_K w$. Then ${}^K W$ is a set of representatives of $W_K\backslash W$. The longest element in the set ${}^K W$ is $w_{0,K} w_0$.
\item We set throughout
\begin{equation}\label{z-def}
    z=\sigma(w_{0,I})w_0.
\end{equation}
The triple $(B,T,z)$ is a $W$-frame, in the terminology of \cite[Definition 2.3.1]{Goldring-Koskivirta-zip-flags} (we simply call such a triple a frame).
\item For $w\in W$, fix a representative $\dot{w}\in N_G(T)$, such that $(w_1w_2)^\cdot = \dot{w}_1\dot{w}_2$ whenever $\ell(w_1 w_2)=\ell(w_1)+\ell(w_2)$ (this is possible by choosing a Chevalley system, \cite[ XXIII, \S6]{SGA3}). If no confusion occurs, we simply write $w$ instead of $\dot{w}$.
\item For $w,w'\in {}^I W$, write $w'\preccurlyeq w$ if there exists $w_1\in W_I$ such that $w'\leq w_1 w \sigma(w_1)^{-1}$. This defines a partial order on ${}^I W$ (\cite[Corollary 6.3]{Pink-Wedhorn-Ziegler-zip-data}).
\item For $w\in {}^I W$, define $G_w$ as the $E$-orbit of $\dot{w}\dot{z}^{-1}$.
\end{bulletlist}

\bigskip

We now explain the parametrization of $E$-orbits in $G_k$.

\begin{theorem}[{\cite[Theorem 7.5]{Pink-Wedhorn-Ziegler-zip-data}}] \label{thm-E-orb-param} \ 
\begin{assertionlist}
\item Each $E$-orbit is smooth and locally closed in $G_k$.
\item The map $w\mapsto G_w$ is a bijection from ${}^I W$ onto the set of $E$-orbits in $G_k$.
\item For $w\in {}^I W$, one has $\dim(G_w)= \ell(w)+\dim(P)$.
\item The Zariski closure of $G_w$ is 
\begin{equation}\label{equ-closure-rel}
\overline{G}_w=\bigsqcup_{w'\in {}^IW,\  w'\preccurlyeq w} G_{w'}.
\end{equation}
\end{assertionlist}
\end{theorem}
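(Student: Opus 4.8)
\emph{Proof idea.} As this is the classification theorem of Pink--Wedhorn--Ziegler, the plan is to reconstruct the structure of their argument. The first ingredient is the internal structure of the zip group $E$. The two projections $E\to P$ and $E\to Q$ are surjective: the fibre of $E\to P$ above $x\in P$ is $\{y\in Q\mid \theta_M^Q(y)=\varphi(\theta_L^P(x))\}$, a nonempty coset of $\ker(\theta_M^Q)=R_{\mathrm u}(Q)$, and symmetrically the fibres of $E\to Q$ are cosets of $R_{\mathrm u}(P)$; moreover $E\cap(L\times M)$ is the graph $\{(\ell,\varphi(\ell))\mid \ell\in L\}\cong L$ of $\varphi$. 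Thus $E$ is a smooth connected group generated by $R_{\mathrm u}(P)\times\{1\}$, $\{1\}\times R_{\mathrm u}(Q)$ and this graph, with $\dim E=\dim P+\dim R_{\mathrm u}(Q)=\dim L+2\dim R_{\mathrm u}(P)=\dim G$. In particular $\GZip^\mu=[E\backslash G_k]$ has dimension $0$, and since the orbits of a smooth algebraic group action on a variety are smooth and locally closed, assertion (1) follows.

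Next, the frame and the parametrization (assertions (2) and (3)). Since all zip data considered here have $L$ defined over $\FF_p$, one has $M=L$, $Q=P_+$, and $z=w_{0,I}w_0$; a direct root computation then shows $B\subset P$ and $\dot z B\dot z^{-1}\subset Q$ (the root system of $\dot z B\dot z^{-1}$ being $z(-\Phi^+)=w_{0,I}(\Phi^+)$, which is contained in the root system of $Q=P_+$), so $(B,T,z)$ is a frame in the sense of \cite[Definition 2.3.1]{Goldring-Koskivirta-zip-flags}; the Frobenius twist $z=\sigma(w_{0,I})w_0$ only serves to accommodate the general case. Granting this, I would prove: (a) every $E$-orbit contains some $\dot w\dot z^{-1}$ with $w\in{}^IW$, and (b) the $G_w$, $w\in{}^IW$, are pairwise distinct. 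For (a), write $g\dot z\in B\dot vB$ by the Bruhat decomposition; since $B\subset P$ and $\dot z B\dot z^{-1}\subset Q$ this puts $g\in P\dot v\dot z^{-1}Q$, and one then absorbs into $g$, in turn, the factors $R_{\mathrm u}(P)\times\{1\}$, $\{1\}\times R_{\mathrm u}(Q)$ and the Levi graph --- the decisive point being that the graph acts on the Levi coordinate of $g$ through a Frobenius-type twisted conjugation of $L$, which is surjective by Lang's theorem --- leaving a representative $\dot w\dot z^{-1}$, with $w$ normalised to lie in ${}^IW$ by absorbing $W_I$ on the left. For (b) together with (3), I would compute $E_w\colonequals\Stab_E(\dot w\dot z^{-1})$ directly, showing it is an extension of a finite group by a connected unipotent group of dimension $\dim R_{\mathrm u}(P)-\ell(w)$; hence $\dim G_w=\dim E-\dim E_w=\ell(w)+\dim P$, which is (3), and distinctness follows because the Bruhat class of $g\dot z$ descends to an invariant of the orbit $E\cdot g$, finer than the $P\times Q$-double coset of $g$, separating the elements of ${}^IW$.

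Finally, the closure relations (assertion (4)): I would show $G_{w'}\subset\overline{G}_w$ if and only if $w'\preccurlyeq w$. For the implication ``$\Leftarrow$'', combine the Bruhat-cell closure formula $\overline{B\dot vB}=\bigsqcup_{v'\le v}B\dot{v'}B$ with the $E$-stability of $\overline{G}_w$: acting by a representative of $w_1\in W_I$ through the Levi graph carries $\dot w\dot z^{-1}$, inside $\overline{G}_w$, to an element whose associated Bruhat class is indexed by $w_1w\sigma(w_1)^{-1}$, so every $w'$ with $w'\le w_1w\sigma(w_1)^{-1}$ for some $w_1\in W_I$ --- that is, every $w'\preccurlyeq w$ --- contributes $G_{w'}$ to $\overline{G}_w$. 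The reverse inclusion is then forced by irreducibility and the dimension count of (3), since $\bigsqcup_{w'\preccurlyeq w}G_{w'}$ is a closed, $E$-stable, irreducible subset of dimension $\ell(w)+\dim P$ containing the orbit $G_w$. I expect this last step to be the main obstacle: one must verify that the $E$-orbit closure is governed by Bruhat closure \emph{exactly} up to the $\sigma$-twisted $W_I$-conjugation $w\mapsto w_1w\sigma(w_1)^{-1}$ and nothing coarser, which requires matching the refinement of $P\times Q$-double cosets into $E$-orbits against the Bruhat stratification while carrying the Frobenius twist through every step, and also checking that $\preccurlyeq$, so defined, is genuinely a partial order on ${}^IW$.
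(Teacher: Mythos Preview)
The paper does not give its own proof of this theorem: it is stated with an explicit citation to \cite[Theorem 7.5]{Pink-Wedhorn-Ziegler-zip-data} and used as a black box. There is therefore nothing in the paper to compare your proposal against.

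That said, your sketch does broadly track the structure of the original Pink--Wedhorn--Ziegler argument: the description of $E$ as an extension of the graph of $\varphi\colon L\to M$ by $R_{\mathrm u}(P)\times R_{\mathrm u}(Q)$, the use of the Bruhat decomposition together with Lang's theorem to find representatives $\dot w\dot z^{-1}$, and the reduction of closure relations to Bruhat closure modulo $\sigma$-twisted $W_I$-conjugation. Two points deserve caution. First, your step (a) elides a genuine induction: after reaching $P\dot v\dot z^{-1}Q$ one does not in one stroke absorb the Levi factor via Lang, because the Frobenius-twisted conjugation on $L$ interacts nontrivially with the unipotent remainders; Pink--Wedhorn--Ziegler handle this by an inductive procedure on an auxiliary length function, not a single application of Lang. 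Second, for (4) you correctly identify the hard direction, but your proposed argument for ``$\Rightarrow$'' --- that $\bigsqcup_{w'\preccurlyeq w}G_{w'}$ is already closed and irreducible --- is circular: closedness of this union is exactly what you are trying to prove, and irreducibility is not obvious a priori. The actual argument shows directly that $\overline{G}_w$ is contained in the union by analysing how $E$-equivalence refines $(P\times Q)$-equivalence on each Bruhat cell.
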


For each $w\in {}^I W$, we define the quotient stack
\begin{equation}
    \Xcal_w \colonequals [E\backslash G_w].
\end{equation}
It is a locally closed substack of $\GZip^\mu=[E\backslash G_k]$. We call the $\Xcal_w$ the zip strata of $\GZip^\mu$. We obtain a stratification $\GZip^\mu = \bigsqcup_{w\in {}^I W} \Xcal_w$ and the closure relations between strata are given by \eqref{equ-closure-rel}. In particular, there is a unique open $E$-orbit
$U_\mu\subset G$ corresponding to the longest element 
\begin{equation}
    w_{\max}\colonequals w_{0,I}w_0 
\end{equation}
in ${}^I W$. We always have $1\in U_\mu$. The open substack $\Ucal_\mu\colonequals \Xcal_{w_{\max}}$ is called the \emph{$\mu$-ordinary locus} of $\GZip^\mu$, following usual terminology pertaining to Shimura varieties.

\subsection{\texorpdfstring{$G$-zips and Shimura varieties of Hodge-type}{}}\label{sec-GZip-HT}

We retain the notations of section \ref{sec-HT-SV}. Let $S_K=\Sscr_K\otimes_{\Ocal_{\mathbf{E}_\pfr}} k$ be the special fiber of a Hodge-type Shimura variety at a place of good reduction. Define $G\colonequals \mathbf{G}_{\ZZ_p}\otimes_{\ZZ_p} \FF_p$. We can find a cocharacter $\mu$ in the conjugacy class $[\mu]$ such that $\mu$ extends to a cocharacter of $\mathbf{G}_{\ZZ_p}\otimes_{\ZZ_p} \Ocal_{\mathbf{E}_{\pfr}}$ (\cite[Corollary 3.3.11]{Kim-Rapoport-Zink-uniformization}). We denote again by $\mu$ the cocharacter of $G_k$ obtained by reducing modulo $p$. By \S\ref{sec-GZip-general}, there is a stack of $G$-zips of type $\mu$ attached to the cocharacter datum $(G,\mu)$. Zhang (\cite[4.1]{Zhang-EO-Hodge}) constructed a smooth morphism
\begin{equation}\label{zeta-Shimura}
\zeta_K \colon S_{K}\to \GZip^\mu,
\end{equation}
whose fibers are called the Ekedahl--Oort strata of $S_K$. This map is also surjective by \cite[Corollary 3.5.3(1)]{Shen-Yu-Zhang-EKOR}. The map $\zeta_K$ commutes with change-of-level maps, in the sense that for any compact open subgroups $K'^p\subset K^p \subset \mathbf{G}(\AA_f^p)$, there is a commutative diagram
\begin{equation}
\xymatrix@M=5pt{
     S_{K'} \ar[rd]^{\zeta_{K'}} \ar[dd]_{\pi_{K',K}} & \\
     & \GZip^\mu \\
     S_{K} \ar[ru]_{\zeta_K}
     }
\end{equation}
For this reason, we often omit the subscript $K$ and denote these maps by $\zeta$. For $w\in {}^I W$, write
\begin{equation}
    S_{K,w}\colonequals \zeta^{-1}(\Xcal_{w})
\end{equation}
for the corresponding Ekedahl--Oort stratum. The Ekedahl--Oort stratum $S_{K,w_{\max}}$ corresponding to the maximal element $w_{\max}=w_{0,I}w_0\in {}^I W$ is called the $\mu$-ordinary locus. When $\mathbf{E}_\pfr=\QQ_p$, this is simply the classical ordinary locus of $S_K$, and is often denoted by $S_K^{\ord}$.

\subsubsection*{The Siegel case}\label{siegel-zip-sec}
We consider the case $G=\GSp_{2n,\FF_p}$ (as defined in section \ref{Siegel-type-sec}). We endow $G$ with the cocharacter
\begin{equation}
    \mu_G\colon \GG_{\mathrm{m},k} \to G_k, \quad t\mapsto \left(
\begin{matrix}
    t I_n & \\ & I_n
\end{matrix}
    \right).
\end{equation}
Let $(u_1,\dots, u_{2n})$ denote the canonical basis of $V = k^{2n}$, and set \[V_P\colonequals \Span_k(u_{n+1},\dots, u_{2n}).\]
Let $P$ be the parabolic subgroup which stabilizes the filtration $0\subset V_P \subset V$. Similarly, let $Q$ be the parabolic subgroup opposite to $P$ stabilizing the subspace $V_Q\colonequals \Span_k(u_1,\dots, u_n)$. The intersection $L\colonequals P\cap Q$ is a common Levi subgroup to $P$ and $Q$. The zip datum attached to $(G,\mu)$ is $\Zcal_\mu = (G,P,Q,L,L)$. Let $E\subset P\times Q$ be the group defined in \eqref{eq-Edef} and let $\GZip^{\mu_G}\colonequals [E\backslash G_k]$ be the attached stack of $G$-zips. We fix $B\subset G$ to be the Borel subgroup of lower-triangular matrices in $G$, and we let $T\subset B$ be the maximal torus consisting of diagonal matrices in $G$. The Weyl group $W=W(G,T)$ is isomorphic to $(\ZZ/2\ZZ)^n \rtimes \Sfr_n$ and can be identified with the group of permutations $w\in \Sfr_{2n}$ satisfying 
\begin{equation}\label{SpW-eq}
    w(i)+w(2n+1-i)=2n+1.
\end{equation}
The Weyl group $W_L=W(L,T)$ is isomorphic to $\Sfr_n$ and identifies with the subgroup of permutations in $\Sfr_{2n}$ satisfying \eqref{SpW-eq} and stabilizing the subset $\{1,\dots, n\}$. As explained in \S\ref{sec-GZip-HT}, there is a smooth surjective morphism
\begin{equation}
    \zeta_G\colon \overline{\Ascr}_{n,K}\to \GZip^{\mu_G}
\end{equation}
whose fibers are the Ekedahl--Oort strata of $\overline{\Ascr}_{n,K}$.

\subsubsection*{The Hilbert--Blumenthal case}
Next, we consider the Hilbert--Blumenthal case. We simply write $H$ for the special fiber of the reductive group $\textbf{H}_{\ZZ_p}$ defined in section \ref{hb-var-sec}. The embedding $u_{\ZZ_p}\colon \mathbf{H}_{\ZZ_p}\to \mathbf{G}_{\ZZ_p}$ induces an embedding 
\begin{equation}
    u\colon H\to G
\end{equation}
(to simplify the notation, we continue to denote it by $u$). If we let $\pfr_1,\dots,\pfr_r$ be the prime ideals of $\Ocal_{\mathbf{F}}$ above $p$ and write $\kappa_i\colonequals \Ocal_{\mathbf{F}}/\pfr_i$, we may view $H$ as a subgroup
\begin{equation}
    H\subset \prod_{i=1}^r \Res_{\kappa_i/\FF_p} \GL_{2,\kappa_i}.
\end{equation}
The group $H_k$ identifies with
\begin{equation} \label{Hk-ident}
    \left\{(A_1,\dots, A_n)\in \GL_{2,k}^n \ \relmiddle| \ \det(A_1)=\dots=\det(A_n) \right\}.
\end{equation}
We consider the stack of $H$-zips attached to the cocharacter
\begin{equation}
    \mu_H\colon \GG_{\mathrm{m},k}\to H_k, \quad t\mapsto \left(
\left( \begin{matrix}
    t & \\ & 1
\end{matrix}
    \right), \dots , \left( \begin{matrix}
    t & \\ & 1
\end{matrix}
    \right)
    \right).
\end{equation}
Write $B_H$ (resp. $B_H^{+}$) for the subgroup consisting of tuples $(A_1,\dots,A_n)\in H_k$ of lower-triangular (resp. upper-triangular) matrices. It is clear that $B_H$, $B_H^+$ are Borel subgroups of $H$ defined over $\FF_p$. Write $T_H=B_H\cap B_H^+$ for the maximal torus consisting of tuples of diagonal matrices in $H_k$. The zip datum $\Zcal_{\mu_H}$ attached to $\mu_H$ is $\Zcal_{\mu_H}=(H,B_H,B^+_H, T_H,T_H)$. Let $\HZip^{\mu_H}$ be the stack of $H$-zips attached to $(H,\mu_H)$. Note that the embedding $u \colon H\to G$ is compatible with the cocharacters $\mu_H$ and $\mu_G$. Therefore, $u$ induces by functoriality (see diagram \eqref{GZip-functor}) a morphism of stacks
\begin{equation}
    u_{\zip}\colon \HZip^{\mu_H} \to \GZip^{\mu_G}.
\end{equation}
For simplicity, we omit the cocharacters in the notation and write respectively $\HZip$ and $\GZip$ for the above stacks.

Write $\zeta_H\colon \overline{\Hscr}_{\mathbf{F},K'}\to \HZip$ for the map given by \eqref{zeta-Shimura}. For a fixed choice of compatible level structures $K'\subset \mathbf{H}(\AA_f^p)$ and $K\subset \mathbf{G}(\AA_f^p)$, we sometimes write $X_H\colonequals \overline{\Hscr}_{\mathbf{F},K'}$ and $X_G\colonequals \overline{\Ascr}_{n,K}$ in order to uniformize the notation. Write simply $\widetilde{u}$ for the map $\widetilde{u}_{K',K}\colon \overline{\Hscr}_{\mathbf{F},K'} \to \overline{\Ascr}_{n,K}$ given by \eqref{embed-HB-S}. By the construction of the maps $\zeta_H$ and $\zeta_G$, there is a commutative diagram:
\begin{equation}\label{diag-com-Shim}
\xymatrix@M=8pt{ 
X_H \ar[d]_{\zeta_H} \ar[r]^{\widetilde{u}} & X_G \ar[d]^{\zeta_G} \\
\HZip \ar[r]_{u_{\zip}} & \GZip.
}
\end{equation}

\subsection{The flag space}\label{subsec-flag}

\subsubsection{Flag strata}\label{sec-flag-strata}
We briefly consider the general setting before specializing to the groups $H,G$ of section \ref{sec-GZip-HT}. Let $(G,\mu)$ be a cocharacter datum and let $\Zcal_\mu=(G,P,Q,L,M)$ be the attached zip datum. Fix a Borel subgroup $B\subset P$. We defined in \cite[Part 1, \S2]{Goldring-Koskivirta-Strata-Hasse} the stack of $G$-zip flags $\GF^\mu$ (attached to $B$). It can be defined as the quotient stack
\begin{equation}\label{eq-GZipFlag}
    \GF^\mu = \left[ E \backslash \left(G_k\times P/B \right) \right].
\end{equation}
Here, $E$ acts on $G_k\times (P/B)$ by $(x,y)\cdot (g, hB) = (xgy^{-1}, xhB)$ for all $(x,y)\in E$, $g\in G$, $h\in P$. It also admits an interpretation as a moduli space of torsors (see \loccit Definition 2.1.1).
The first projection induces a morphism of stacks
\begin{equation}
    \pi \colon \GF^\mu \to \GZip^\mu
\end{equation}
whose fibers are isomorphic to $P/B$. Now, let $X$ be a $k$-scheme endowed with a morphism $\zeta\colon X\to \GZip^\mu$. We define the flag space $\Flag(X)$ of $X$ as the fiber product
\begin{equation}\label{zeta-flag}
    \xymatrix@1@M=5pt{
\Flag(X)\ar[r]^-{\zeta_{\flag}} \ar[d]_{\pi_X} & \GF^\mu \ar[d]^{\pi} \\
X \ar[r]_-{\zeta} & \GZip^\mu.
}
\end{equation}
We briefly recall the flag stratification of $\GF^\mu$, defined in \cite[\S 2.3]{Goldring-Koskivirta-Strata-Hasse}. Define $\Sbt\colonequals [B\backslash G_k / B]$ (we call this stack the Schubert stack of $G$). There is a natural map $\psi\colon \GF^\mu\to \Sbt$, defined as follows. The natural inclusion $G_k\to G_k\times P/B$, $x\mapsto (x,1)$ identifies $\GF^\mu$ with the quotient $[E'\backslash G_k]$, where $E'=E \cap (B\times G_k)$. One sees immediately that $E'\subset B\times {}^z B$ (where $z=\sigma(w_{0,I})w_0$ as defined in \eqref{z-def}), so that there is a canonical projection $[E'\backslash G_k]\to [B\backslash G_k / {}^z B]$. Finally, the map $G_k\to G_k$, $g\mapsto g \dot{z}$ yields an isomorphism $[B\backslash G_k / {}^z B]\to [B\backslash G_k / B]=\Sbt$. By composing these maps, we obtain a smooth, surjective morphism
\begin{equation}\label{psi-eq}
    \psi\colon \GF^\mu \to \Sbt.
\end{equation}
By the Bruhat stratification of $G$, the points of the underlying topological space of $\Sbt$ correspond bijectively to the elements of the Weyl group $W$. Specifically, the element $w\in W$ corresponds to the locally closed point 
\begin{equation}
    \Sbt_w\colonequals [B\backslash BwB /B].
\end{equation}
For $w\in W$, we denote by $\Fcal_{G,w}$ or simply $\Fcal_w$ the preimage $\psi^{-1}(\Sbt_w)$. It is locally closed in $\GF^\mu$. We call the locally closed subsets $(\Fcal_w)_{w\in W}$ the flag strata of $\GF^\mu$. Each flag stratum $\Fcal_w$ is smooth, and the Zariski closure $\overline{\Fcal}_w$ is normal. The image of a flag stratum under the projection $\pi\colon \GF^{\mu}\to \GZip^{\mu}$ is a union of zip strata (defined in section \ref{sec-param}). It is difficult to determine in general which zip strata appear in $\pi(\Fcal_w)$. However, in the special case $w\in {}^I W$, we have $\pi(\Fcal_w)=\Xcal_w$. Moreover, the map $\pi\colon \Fcal_w\to \Xcal_w$ is finite etale (\cite[Proposition 2.2.1]{Koskivirta-Normalization}). In particular, for $w_{\max}=w_{0,I}w_0$ (the longest element of ${}^I W$), we have $\pi(\Fcal_{w_{\max}})=\Ucal_\mu$ and the map $\pi\colon \Fcal_{w_{\max}}\to\Ucal_\mu$ is finite etale. For $w\in W$, we write
\begin{equation}\label{FlagXw-eq}
\Flag(X)_{w} \colonequals \zeta_{\flag}^{-1}(\Fcal_w)    
\end{equation}
for the corresponding flag stratum in $\Flag(X)$, where $\zeta_{\flag}$ is the map defined in diagram \eqref{zeta-flag}. If $\zeta$ is smooth, then each flag stratum $\Flag(X)_{w}$ is smooth and its Zariski closure $\overline{\Flag}(X)_{w}$ is normal.

\subsubsection{Functoriality}\label{sec-functo-flag}
The flag space satisfies functoriality properties, as explained in \cite[\S5.3]{Goldring-Koskivirta-zip-flags}. Let $(H,\mu_H)$ and $(G,\mu_G)$ be two cocharacter data and let $f\colon H\to G$ be a homomorphism over $\FF_p$ compatible with $\mu_H$ and $\mu_G$. Write $\Zcal_H=(H,P_H,Q_H,L_H,M_H)$ and $\Zcal_G=(G,P_G,Q_G,L_G,M_G)$ for the zip data associated with $(H,\mu_H)$ and $(G,\mu_G)$ respectively. Furthermore, assume that we have Borel subgroups $B_H\subset P_H$ and $B_G\subset P_G$ such that $f(B_H)\subset B_G$. Then, there is a natural morphism of $k$-stacks $f_{\flag} \colon \HF^{\mu_H}\to \GF^{\mu_G}$ which makes the diagram below commute
\begin{equation}\label{diag-flag-zip}
\xymatrix@1@M=9pt{
\HF^{\mu_H }\ar[d]_-{\pi_{H}} \ar[r]^{f_{\flag}} & \GF^{\mu_G} \ar[d]^{\pi_G} \\
\HZip^{\mu_H} \ar[r]_-{f_{\zip}} & \GZip^{\mu_G}.
}    
\end{equation}
Furthermore, the map $f_{\flag}$ sends a flag stratum of $\HF^{\mu_H}$ into a flag stratum of $\GF^{\mu}$ (indeed, the map $\psi$ defined in \eqref{psi-eq} is also functorial). Now, assume further that we have $k$-schemes $X_H$, $X_G$ endowed with morphisms $\zeta_H\colon X_H\to \HZip^{\mu_H}$ and $\zeta_G\colon X_G\to \GZip^{\mu_G}$, together with a morphism $\widetilde{f} \colon X_H\to X_H$ making the diagram below commute.
$$\xymatrix@1@M=9pt{
X_H \ar[d]_-{\zeta_{H}} \ar[r]^{\widetilde{f}} & X_G \ar[d]^{\zeta_G} \\
\HZip^{\mu_H} \ar[r]_-{f_{\zip}} & \GZip^{\mu_G}.
}$$
Then, by taking fiber products, we obtain a natural morphism $\widetilde{f}_{\flag}\colon \Flag(X_H)\to \Flag(X_G)$ such that the diagram below commutes.

\begin{center}
\begin{tikzcd}
        \Flag(X_H)\arrow[rr, "\zeta_{H,\flag}"]\arrow[dr, "\widetilde{f}_{\flag}"]\arrow[dd, "\pi_{X,H}", swap] & & \HF^{\mu_H} \arrow[dr, "f_{\flag}"]\arrow[dd, "\pi_H", near end] & \\
        & \Flag(X_G) \arrow[rr, "\zeta_{G,\flag}", crossing over, near start] & & \GF^{\mu_G} \arrow[dd, "\pi_G"] \\
        X_H \arrow[rr, "\zeta_{H}", near end]\arrow[dr, "\widetilde{f}"] & & \HZip^{\mu_H}\arrow[dr, "f_{\zip}"] & \\
        & X_G \arrow[rr, "\zeta_G"] \arrow[from=uu, "\pi_{X,G}", crossing over, near start, swap] & & \GZip^{\mu_G}.
    \end{tikzcd}    
\end{center}
We denoted by $\pi_H$, $\pi_G$, $\pi_{X,H}$, $\pi_{X,G}$, $\zeta_{H,\flag}$, $\zeta_{G,\flag}$ the obvious maps for each group. 

We now return to the setting of section \ref{sec-GZip-HT}. In particular, we write $X_H=\overline{\Hscr}_{\mathbf{F},K'}$, $X_G=\overline{\Ascr}_{n,K}$ for a compatible choice of level structures $K',K$, and we have the commutative diagram \eqref{diag-com-Shim}. Note that the parabolic subgroup attached to $\mu_H$ is the Borel subgroup $B_H$ of $H$. Therefore, the stacks $\HZip$ and $\HF$ coincide, and similarly we have $X_H=\Flag(X_H)$. Hence, the above diagram collapses in this case to the following, slightly simpler commutative diagram. To simplify the notation further, we write $\Flag_G\colonequals \Flag(X_G)$.

\begin{equation}\label{diag-HG}
\xymatrix@M=5pt@R=1.5cm@C=1.5cm{ 
X_H \ar@/_4.0pc/[dd]_{\widetilde{u}} \ar[r]^{\zeta_H} \ar[d]^{\widetilde{u}_{\flag}}  & \HZip \ar[d]_{u_{\flag}} \ar@/^4.0pc/[dd]^{u_{\zip}} \\
\Flag_G \ar[r]^{\zeta_{G,\flag}} \ar[d] &  \GF \ar[d] \\ 
X_G \ar[r]^{\zeta_G} & \GZip
}
\end{equation}
Let $w_{\max}=w_{0,I}w_0$ be the maximal element of ${}^I W$. The flag stratum $\Fcal_{G,w_{\max}}$ of $\GF$ will play an important role for us. By definition,  we have $\Fcal_{G,w_{\max}}=[E'\backslash B w_{\max} B z^{-1}]$. Concretely, the set $B w_{\max} B z^{-1}$ is the subset of $G_k$ of matrices of the form
\begin{equation}
    \left(
\begin{matrix}
A_1 & A_2 \\ A_3 & A_4 
\end{matrix}
    \right), \quad A_i\in M_n(k), \ \textrm{with } A_1\in \GL_n(k) \ \textrm{and lower-triangular.}
\end{equation}
The Zariski closure of $B w_{\max} B z^{-1}$ has a similar description, but the condition $A_1\in \GL_n(k)$ is removed.

\begin{proposition}\label{uflag-image}
The map \[u_{\flag} \colon \HZip\to \GF\]
maps the $\mu_H$-ordinary locus (i.e. the unique open zip stratum of $\HZip$) into the flag stratum $\Fcal_{G,w_{\max}}$ of $\GF$. In particular, the image of $u_{\flag}$ is contained in $\overline{\Fcal}_{G,w_{\max}}$.
\end{proposition}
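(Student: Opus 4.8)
The plan is to reduce the first assertion to an inclusion of explicit subsets of $G_k$, using the presentations recalled in \S\ref{sec-flag-strata}. Since the parabolic attached to $\mu_H$ is the Borel $B_H$, the stack $\HF^{\mu_H}$ coincides with $\HZip^{\mu_H}=[E_H\backslash H_k]$, and (as $I_H=\varnothing$) the $\mu_H$-ordinary locus $\Ucal_{\mu_H}$ pulls back, along $H_k\to[E_H\backslash H_k]$, to the open $E_H$-orbit $U_H=E_H\cdot 1=\{xy^{-1}\mid (x,y)\in E_H\}$ of the identity. On the $G$-side, put $E'=E\cap(B_G\times G_k)$, so that $\GF^{\mu_G}=[E'\backslash G_k]$; under this identification the preimage in $G_k$ of the flag stratum $\Fcal_{G,w}$ is the $E'$-invariant locally closed subset $Y_w\colonequals B_G\dot w B_G\dot z^{-1}$, a point $g$ lying in $Y_w$ iff $g\dot z\in B_G\dot w B_G$. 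Finally, because $P_H=B_H$, under these identifications $u_{\flag}$ is simply the map induced by $u\colon H_k\to G_k$. Hence the first assertion is equivalent to the set-theoretic inclusion $u(U_H)\subseteq Y_{w_{\max}}$.

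To prove this inclusion, first observe that the explicit form \eqref{emb-split} of $u$ shows $u(B_H)\subseteq B_G$, $u(B_H^+)\subseteq B_G^+$ and $u(T_H)\subseteq T_G$: a matrix in $u(B_H)$ is supported on the diagonal together with the entries in positions $(2n+1-i,i)$ with $1\le i\le n$, each of which lies strictly below the diagonal. Combined with the compatibility of $u$ with $\mu_H$ and $\mu_G$, the functoriality of the $G$-zip construction (see \eqref{GZip-functor}) forces $u(E_H)\subseteq E$; since moreover $u(E_H)\subseteq u(B_H)\times u(B_H^+)\subseteq B_G\times G_k$, we get $u(E_H)\subseteq E\cap(B_G\times G_k)=E'$, and $u$ intertwines the $E_H$-action on $H_k$ with the $E'$-action on $G_k$. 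Therefore $u(U_H)=u(E_H\cdot 1)\subseteq E'\cdot u(1)=E'\cdot 1$. It remains to check that $1\in Y_{w_{\max}}$: by the concrete description of $B_G w_{\max}B_G\dot z^{-1}$ recorded just before the statement, this set consists of the matrices whose upper-left $n\times n$ block is invertible and lower-triangular, and $1=\left(\begin{smallmatrix}I_n&0\\0&I_n\end{smallmatrix}\right)$ plainly satisfies this (equivalently, $z=w_{\max}$ in $W$, since $\GSp_{2n}$ is split over $\FF_p$). As $Y_{w_{\max}}$ is $E'$-invariant, $E'\cdot 1\subseteq Y_{w_{\max}}$, whence $u(U_H)\subseteq Y_{w_{\max}}$. (Alternatively one may verify $u(U_H)\subseteq Y_{w_{\max}}$ by a direct matrix computation: for $x\in B_H$ and $y\in B_H^+$ with $\varphi$-related diagonal parts, the upper-left block of $u(x)u(y)^{-1}$ turns out to be diagonal with nonzero entries.)

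For the last sentence of the proposition, $\HF^{\mu_H}=[E_H\backslash H_k]$ is irreducible (since $H_k$ is) and $\Ucal_{\mu_H}$ is open dense in it, so by continuity the image of $u_{\flag}$ lies in the closure of $u_{\flag}(\Ucal_{\mu_H})$, hence in $\overline{\Fcal}_{G,w_{\max}}$. I do not expect any real difficulty here; the only point demanding care — which I would single out as the main obstacle, though it is essentially a bookkeeping check — is to keep track of the Frobenius twist and confirm that the element $z$ governing the flag stratification of $\GF^{\mu_G}$ and the element $w_{\max}$ indexing its open flag stratum are equal as elements of $W$, so that the identity of $G_k$ falls into $Y_{w_{\max}}$; this is exactly where the splitness of $\GSp_{2n}$ over the prime field is used.
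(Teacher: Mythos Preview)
Your proof is correct and follows essentially the same approach as the paper: check that $1\in H_k$ lies in the $\mu_H$-ordinary locus, that $u(1)=1\in G_k$ lands in $Y_{w_{\max}}$, and then extend to the whole stratum by equivariance. The only difference is cosmetic: the paper invokes the general functoriality statement from \S\ref{sec-functo-flag} that $u_{\flag}$ sends flag strata of $\HF^{\mu_H}$ into flag strata of $\GF^{\mu_G}$, whereas you verify the needed $E'$-equivariance directly via the inclusion $u(E_H)\subseteq E'$.
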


\begin{proof}
The identity element $1\in H$ lies in the $\mu_H$-ordinary locus, and its image by $u$ is $1\in G$. By the above descritpion of $\Fcal_{w_{\max}}$, this point lies in $B w_{\max} B z^{-1}$. Since $u_{\flag}$ sends a flag stratum of $H$ into a flag stratum of $G$, we deduce that any point in the $\mu_H$-ordinary locus of $H$ is sent to a point of $\Fcal_{G,w_{\max}}$ by $u_{\flag}$.
\end{proof}

\section{\texorpdfstring{Automorphic forms in characteristic $p$}{}}

\subsection{Automorphic vector bundles} \label{aut-VB-sec}

We first review automorphic vector bundles on the generic fiber of Hodge-type Shimura varieties. We retain the notation of section \ref{sec-HT-SV}. Let $\mathbf{P}_{\pm}$ be the pair of opposite parabolic subgroups of $\mathbf{G}_\CC$ defined by our chosen cocharacter $\mu\colon \GG_{\mathrm{m},\CC}\to \mathbf{G}_\CC$, as in \eqref{Pminplus}, and write $\mathbf{P}\colonequals \mathbf{P}_-$. The Shimura variety $\Sh_K(\mathbf{G},\mathbf{X})\otimes_{\mathbf{E}}\CC$ carries a natural $\mathbf{P}$-torsor. Therefore, any algebraic representation $\rho\colon \mathbf{P}\to \GL(W)$ (where $W$ is a finite-dimensional $\CC$-vector space) gives rise to a vector bundle $\Vcal(\rho)$ on $\Sh_K(\mathbf{G},\mathbf{X})\otimes_{\mathbf{E}}\CC$. Let $\mathbf{B}\subset \mathbf{P}$ be a Borel subgroup. For a character $\lambda\in X^*(\mathbf{T})$, we may view $\lambda$ as a one-dimensional representation of $\mathbf{B}$ and consider the induced representation
\begin{equation}
    \mathbf{V}_I(\lambda)\colonequals \Ind_{\mathbf{B}}^{\mathbf{P}}(\lambda).
\end{equation}
We denote by $\Vcal_I(\lambda)$ the vector bundle attached to $\mathbf{V}_I(\lambda)$ and call it the automorphic vector bundle attached to $\lambda$. Note that $\mathbf{V}_I(\lambda)=0$ when $\lambda$ is not $I$-dominant (we say that a character $\lambda$ is $I$-dominant if $\langle \lambda, \alpha^\vee\rangle \geq 0$ for all $\alpha\in I$).

The vector bundle $\mathbf{V}_I(\lambda)$ admits an extension to the integral model $\Sscr_K$. Indeed, by our choice of $\mu$ (see section \ref{sec-GZip-HT}), we have a cocharacter $\mu\colon \GG_{\mathrm{m},\Ocal_{\mathbf{E}_\pfr}}\to \mathbf{G}_{\ZZ_p}\otimes_{\ZZ_p} \Ocal_{\mathbf{E}_\pfr}$. It induces therefore a parabolic subgroup $\Pscr\subset \mathbf{G}_{\ZZ_p}\otimes_{\ZZ_p} \Ocal_{\mathbf{E}_\pfr}$. By construction of the integral model $\Sscr_K$, there is a natural $\Pscr$-torsor on $\Sscr_K$. For each $\lambda\in X^*(\mathbf{T})$, consider the $\Pscr$-representation
\begin{equation}
\mathbf{V}_{I}(\lambda)_{\Pscr} \colonequals H^0(\Pscr/\Bscr,\Lscr_\lambda), 
\end{equation}
where $\Lscr_\lambda$ is the line bundle on $\mathscr{P}/\mathscr{B}$ naturally attached to $\lambda$ (strictly speaking, the line bundle $\Lscr_\lambda$ may not be defined over
$\Ocal_{\mathbf{E}_{\pfr}}$, so it may be necessary to pass to some ring extension contained in $\overline{\ZZ}_p$). By applying this representation to the $\Pscr$-torsor of $\Sscr_K$, we obtain a vector bundle $\Vcal_I(\lambda)$ on $\Sscr_K\otimes_{\Ocal_{\mathbf{E}_{\pfr}}} \overline{\ZZ}_p$ which extends the one on the generic fiber. We denote again its special fiber by $\Vcal_I(\lambda)$. It is a vector bundle on $S_K$ modeled on the $k$-representation $V_I(\lambda)=\Ind_B^P(\lambda)$, where $B$ and $P$ denote the special fibers of $\Bscr$ and $\Pscr$ respectively.

Let $\Omega$ be a field endowed with a ring homomorphism $\Ocal_{\mathbf{E}_{\pfr}}\to \Omega$. An element of the space
\begin{equation}
H^0(\Sscr_K\otimes_{\Ocal_{\mathbf{E}_\pfr}} \Omega, \Vcal_I(\lambda))
\end{equation}
will be called an automorphic form of weight $\lambda$ and level $K$ with coefficients in $\Omega$. In this paper, we are mainly interested in the case $\Omega=k$.

\subsection{\texorpdfstring{Vector bundles on $\GZip^\mu$}{}}

Let $(G,\mu)$ be a cocharacter datum with attached zip datum $\Zcal_{\mu}=(G,P,Q,L,M)$ and zip group $E$ (see \eqref{eq-Edef}). Let $\rho \colon E\to \GL(W)$ be an algebraic representation of $E$ on a finite-dimensional $k$-vector space $W$. By \cite[\S 2.4.2]{Imai-Koskivirta-vector-bundles}, we can attach a vector bundle $\Vcal(\rho)$ on $\GZip^\mu=[E\backslash G_k]$ using the associated sheaf construction of \cite[I.5.8]{jantzen-representations}. If $\rho \colon P\to \GL(W)$ is an algebraic representation of $P$, we view it is an $E$-representation via the first projection $\pr_1\colon E\to P$, and we denote again the associated vector bundle by $\Vcal(\rho)$. For $\lambda\in X^*(T)$, view $\lambda$ as a one-dimensional representation of $B$ and consider the induced $P$-representation
\begin{equation}
V_I(\lambda) \colonequals \Ind_B^P(\lambda).
\end{equation}
The vector bundle on $\GZip^\mu$ attached to $V_I(\lambda)$ is denoted by $\Vcal_I(\lambda)$. The vector bundles $\Vcal_I(\lambda)$ on $\GZip^\mu$ and $S_K$ are compatible via the map $\zeta\colon S_K\to \GZip^\mu$, in the sense that $\zeta^*(\Vcal_I(\lambda))$ coincides with the vector bundle $\Vcal_I(\lambda)$ defined in section \ref{aut-VB-sec}.

By a similar construction, each algebraic $B$-representation $\rho\colon B\to \GL(W)$ gives rise to a vector bundle $\Vcal_{\flag}(\rho)$ on the flag space $\GF^\mu$ (\cite[\S 3]{Imai-Koskivirta-partial-Hasse}). In particular, we have a family of line bundles $\Vcal_{\flag}(\lambda)$ for $\lambda\in X^*(T)$. We denote again by $\Vcal_{\flag}(\lambda)$ the line bundle $\zeta_{\flag}^*(\Vcal_{\flag}(\lambda))$ on the flag space $\Flag(S_K)$. For a $B$-representation $\rho$, the push-forward of $\Vcal_{\flag}(\rho)$ via the map $\pi\colon \GF^\mu \to \GZip^\mu$ is given by
\[
\pi_*(\Vcal_{\flag}(\rho))=\Vcal(\Ind_B^P(\rho))
\]
by \cite[Proposition 3.2.1]{Imai-Koskivirta-partial-Hasse}. In particular, when $\rho$ is the one-dimensional $B$-representation given by a character $\lambda \in X^*(T)$, we have the identification 
\begin{equation}\label{Vcal-formula}
    \pi_*(\Vcal_{\flag}(\lambda))=\Vcal_I(\lambda).
\end{equation}
This yields an identification $H^0(\GZip^\mu,\Vcal_I(\lambda))=H^0(\GF^\mu,\Vcal_{\flag}(\lambda))$. For a general pair $(G,\mu)$, this space can be described as the part of the Brylinski--Kostant filtration of $V_I(\lambda)$ stable under the action of a certain finite group (see \cite[Theorem 3.4.1]{Imai-Koskivirta-vector-bundles}). When $P$ is defined over $\FF_p$, it has a simpler description: For each $\chi\in X^*(T)$, write $V_I(\lambda)_\chi$ for the $\chi$-weight space of $V_I(\lambda)$ and define
\begin{equation}\label{H0zip-eq}
  V_I(\lambda)_{\leq 0} = \bigoplus_{\substack{\langle \chi,\alpha^\vee\rangle \leq 0 \\
  \textrm{for all} \ \alpha\in \Delta\setminus I}}
  V_I(\lambda)_{\chi}.
\end{equation}
Then we have an identification (\cite[Theorem 3.7.2]{Koskivirta-automforms-GZip})
\begin{equation}
H^0(\GZip^\mu,\Vcal_I(\lambda))=V_I(\lambda)_{\leq 0} \cap V_I(\lambda)^{L(\FF_p)}.
\end{equation}
Assume now that $S_K$ is the good reduction special fiber of a Hodge-type Shimura variety as in section \ref{sec-GZip-HT}, and $\GZip^\mu$ is the attached stack of $G$-zips. Denote by $\pi_K$ the projection $\pi_K\colon \Flag(S_K)\to S_K$ (see diagram \eqref{zeta-flag}). Then formula \eqref{Vcal-formula} holds also on the level of $S_K$, i.e. one has $\pi_{K,*}(\Vcal_{\flag}(\lambda))=\Vcal_I(\lambda)$ (indeed, since $\zeta$ is smooth, push-forward and pull-back commute). Therefore, the space of automorphic forms of weight $\lambda$ can be identified with
\begin{equation} \label{identif-lambda}
H^0(S_K,\Vcal_I(\lambda))=H^0(\Flag(S_K),\Vcal_{\flag}(\lambda)).
\end{equation}
We are thus reduced to studying the line bundle $\Vcal_{\flag}(\lambda)$ on the flag space of $S_K$.

\subsection{Extension to the toroidal compactification}\label{sec-tor-ext}

Retain the notations and assumptions of section \ref{sec-GZip-HT}. By \cite[Theorem 1]{Madapusi-Hodge-Tor}, we can find a sufficiently fine cone decomposition $\Sigma$ and a smooth toroidal compactification $\Sscr_K^\Sigma$ of $\Sscr_K$ over $\Ocal_{\mathbf{E}_\pfr}$. The family $(\Vcal_I(\lambda))_{\lambda\in X^*(\mathbf{T})}$ admits a canonical extension $(\Vcal^{\Sigma}_I(\lambda))_{\lambda\in X^*(\mathbf{T})}$ to $\Sscr^{\Sigma}_K$. The following result is known as the Koecher principle. 

\begin{theorem}[{\cite[Theorem 2.5.11]{Lan-Stroh-stratifications-compactifications}}] \label{thm-koecher}
Let $\Omega$ be a field which is an $\Ocal_{\mathbf{E}_{\pfr}}$-algebra, and assume that $\Vcal_I(\lambda)$ is defined over $\Omega$. The natural map
\begin{equation}
H^0(\Sscr^{\Sigma}_K\otimes_{\Ocal_{\mathbf{E}_{\pfr}}} \Omega,\Vcal^{\Sigma}_I(\lambda)) \to H^0(\Sscr_K\otimes_{\Ocal_{\mathbf{E}_{\pfr}}} \Omega,\Vcal_I(\lambda)) 
\end{equation}
is a bijection, except when $\dim(\Sh_K(\mathbf{G},\mathbf{X}))=1$ and $\Sscr^{\Sigma}_K \setminus \Sscr_K\neq \emptyset$.
\end{theorem}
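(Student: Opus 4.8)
The plan is to treat the two directions of the asserted bijection separately; injectivity is formal, and the entire content lies in surjectivity, which is the Koecher principle.

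First, injectivity. Writing $j$ for the open immersion $\Sscr_K\otimes_{\Ocal_{\mathbf{E}_\pfr}}\Omega\hookrightarrow \Sscr^\Sigma_K\otimes_{\Ocal_{\mathbf{E}_\pfr}}\Omega$, the image is dense (by construction $\Sscr_K^\Sigma$ is obtained from $\Sscr_K$ by adjoining a relative normal crossings boundary divisor $\partial\Sscr_K^\Sigma$) and $\Vcal^\Sigma_I(\lambda)$ is locally free, hence torsion-free, so restriction of global sections is injective. Moreover, since $\Vcal^\Sigma_I(\lambda)$ is locally free and $\partial\Sscr^\Sigma_K$ is an effective Cartier divisor, one has $j_*j^*\Vcal^\Sigma_I(\lambda)=\Vcal^\Sigma_I(\lambda)(\ast\,\partial\Sscr_K^\Sigma)$; thus a global section $f$ of $\Vcal_I(\lambda)$ over $\Sscr_K\otimes\Omega$ is exactly a global section over $\Sscr^\Sigma_K\otimes\Omega$ of the canonical extension allowed to have poles of bounded order along the boundary. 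Surjectivity is therefore the statement that no such poles occur.

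To prove this I would localize along the boundary. The absence of a pole of $f$ along $\partial\Sscr_K^\Sigma$ can be checked after passing to the formal completion of $\Sscr_K^\Sigma$ along each boundary stratum, and here one invokes the structure theory of toroidal compactifications of Hodge-type Shimura varieties (Lan's arithmetic theory, extended to the Hodge-type case by Madapusi Pera): \'etale-locally the formal completion along a boundary stratum $Z$ is a torus embedding over a torsor under an abelian scheme over a smaller (``boundary'') Shimura variety, the cone decomposition $\Sigma$ prescribes which torus orbits are glued in, and $\Vcal^\Sigma_I(\lambda)$ is characterized by the vanishing of negative Fourier--Jacobi coefficients. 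Concretely, along a codimension-one boundary stratum $Z$ one has a Fourier--Jacobi expansion $f=\sum_\ell \mathrm{FJ}_\ell(f)$ indexed by a coset of a rank-one lattice, with $\mathrm{FJ}_\ell(f)$ a global section of a line bundle $\Lscr_{\lambda,\ell}$ on a torsor $C_Z$ under an abelian scheme $B_Z\to Z$; the terms with $\ell<0$ are precisely the polar part of $f$ along that component.

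The decisive point is then a vanishing: for $\ell<0$ the bundle $\Lscr_{\lambda,\ell}$ restricts on the fibres of $C_Z\to Z$ to a \emph{negative}, hence anti-ample, power of the polarizing line bundle, so $H^0$ of each such fibre vanishes whenever that fibre has positive dimension, whence $H^0(C_Z,\Lscr_{\lambda,\ell})=0$ whenever $B_Z\to Z$ has positive relative dimension. For a Siegel-type (or general Hodge-type) Shimura variety this holds for every codimension-one boundary stratum exactly when $\dim\Sh_K(\mathbf{G},\mathbf{X})\geq 2$; when $\dim\Sh_K(\mathbf{G},\mathbf{X})=1$ the boundary strata are $0$-dimensional with trivial abelian part, each $\mathrm{FJ}_\ell(f)$ is merely a scalar, and negative $\ell$ genuinely occurs (the classical phenomenon of modular forms meromorphic at the cusps, e.g.\ $1/\Delta$), which is why that case must be excluded — unless $\Sscr_K$ is already proper, i.e.\ $\Sscr^\Sigma_K\setminus\Sscr_K=\emptyset$, in which case the assertion is vacuous. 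Granting the vanishing, every $\mathrm{FJ}_\ell(f)$ with $\ell<0$ is zero along every codimension-one boundary component, so $f$ extends across $\partial\Sscr_K^\Sigma$ to a section of $\Vcal^\Sigma_I(\lambda)$; to be fully rigorous one runs a descending induction on the boundary stratification (equivalently on the codimension of strata in the minimal compactification $\Sscr^{\min}_K$), the input at each step being the same anti-ampleness statement. I expect the real obstacle to be not this last vanishing but the construction and bookkeeping of the Fourier--Jacobi expansion and the precise description of the canonical extension $\Vcal^\Sigma_I(\lambda)$ in the integral arithmetic setting, uniformly over all boundary strata and with base change to $\Omega$ kept under control — this is the technical heart of Lan's and Lan--Stroh's work, which I would cite rather than reprove.
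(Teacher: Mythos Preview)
The paper does not give its own proof of this statement: it is simply quoted from \cite[Theorem 2.5.11]{Lan-Stroh-stratifications-compactifications} and used as a black box, so there is nothing to compare against directly. Your sketch is a faithful outline of the argument one finds in the cited reference (and in Lan's earlier work on PEL-type compactifications): injectivity is formal from local freeness and density, and surjectivity is reduced, via the local structure of toroidal boundary charts and Fourier--Jacobi expansions, to the vanishing of global sections of anti-ample line bundles on positive-dimensional abelian-scheme fibres. Your identification of the exceptional case $\dim \Sh_K(\mathbf{G},\mathbf{X})=1$ with nonempty boundary is also the correct reason for the exclusion.

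One small caveat: your description of the Fourier--Jacobi index set as ``a coset of a rank-one lattice'' and the induction as being ``on the codimension of strata in $\Sscr_K^{\min}$'' is a bit oversimplified for general Hodge-type boundary combinatorics; in Lan--Stroh the expansion is indexed by a cone in a lattice attached to the cusp label, and the positivity/anti-ampleness statement is phrased in terms of the polarization on the abelian part of the $1$-motive at the boundary. But this is a matter of bookkeeping rather than a gap in the strategy, and you are right that the technical heart lies in the construction of the integral Fourier--Jacobi expansion and the characterization of the canonical extension, which is precisely what the paper outsources to the cited reference.
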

The Koecher principle holds for the Shimura varieties considered in this article. Consider the special fiber $S_K^\Sigma \colonequals \Sscr_K^{\Sigma}\otimes_{\Ocal_{\mathbf{E}_{\pfr}}} k$. By \cite[Theorem 6.2.1]{Goldring-Koskivirta-Strata-Hasse}, the map $\zeta\colon S_{K}\to \GZip^\mu$ extends naturally to a map of stacks
\begin{equation}
    \zeta^\Sigma\colon S_{K}^{\Sigma}\to \GZip^\mu
\end{equation}
and by \cite[Theorem 1.2]{Andreatta-modp-period-maps}, the extended map $\zeta^\Sigma$ is again smooth. Since $\zeta$ is surjective, $\zeta^\Sigma$ is also surjective. By construction, the pull-back by $\zeta^{\Sigma}$ of $\Vcal_I(\lambda)$ coincides with the canonical extension $\Vcal^{\Sigma}_I(\lambda)$.

\subsection{Finite etale maps}

Let $f\colon Y\to X$ be a finite etale map of degree $n\geq 1$ between noetherian schemes. If $\Lscr$ is a line bundle on $X$, it is known that one has a natural trace map
\begin{equation}
    \Tr\colon H^0(Y,f^*(\Lscr))\to H^0(X,\Lscr).
\end{equation}
We generalize this construction to other symmetric functions. We first consider the case when $\Lscr=\Ocal_X$ and define maps
\begin{equation}
    c_i\colon H^0(Y,\Ocal_Y)\to H^0(X,\Ocal_X)
\end{equation}
for each $0\leq i \leq n-1$ as follows. We first examine the case when $X=\spec(A)$, $Y=\spec(B)$ and $B$ is a free $A$-algebra. In this case, for each $b\in B$, we may consider the characteristic polynomial $\chi_b(X)$ of the $A$-linear map $B\to B$, $x\mapsto bx$. Then, define $c_i\colon B\to A$ as the $i$-th coefficient of $\chi_b(X)$, for each $0\leq i\leq n-1$. Next, assume that $B$ is a finite etale $A$-algebra. In this case $B$ is a locally free $A$-algebra, so by taking an appropriate open covering of $A$, we may extend the definition of $\chi_b(X)$ and $c_i(b)$ by glueing. This defines a map of sets $c_i\colon B\to A$. Similarly, for an arbitrary finite etale map $f\colon Y\to X$ of degree $n$, we choose an open affine covering of $X$ and glue the maps $c_i$ defined in the affine case. We obtain a map $c_i\colon H^0(Y,\Ocal_Y)\to H^0(X,\Ocal_X)$. Next, we consider an arbitrary line bundle $\Lscr$ on $X$. First assume that $X=\spec(A)$, $Y=\spec(B)$ where $B$ is a free $A$-algebra, and $M$ is a free $A$-module of rank one. Choose any element $m\in M$ such that $M=Am$ and define a map (of sets) by
\begin{equation}
c_{i,M}\colon B\otimes_A M \to M^{\otimes i}, \quad b\otimes (am) \mapsto c_i(b) (am)\otimes \dots \otimes (am)
\end{equation}
This map is well-defined since $c_i(ab)=a^i c_i(b)$ for any $a\in A$. By glueing, we may define $c_{i,M}$ when $M$ is a locally free $A$-module of rank one, and eventually we obtain a map
\begin{equation}
    c_{i,\Lscr}\colon H^0(Y,f^*(\Lscr))\to H^0(X,\Lscr^{\otimes i})
\end{equation}
in the general setting. By our construction, the case $i=1$ coincides with the usual trace map.

\subsection{Stable base locus}

Let $\Lscr$ be a line bundle on a scheme $X$. Recall that the base locus of $\Lscr$ is the set of points $x\in X$ such that any section $s\in H^0(X,\Lscr)$ vanishes at $x$. Denote by $B(\Lscr)$ the base locus of $\Lscr$ and set:
\begin{equation}
    \BB(\Lscr)\colonequals \bigcap_{d\geq 1} B(\Lscr^{\otimes d}).
\end{equation}
The set $\BB(\Lscr)$ is called the stable base locus of $\Lscr$. Clearly, it is a Zariski closed subset of $X$. If $f\colon Y\to X$ is a morphism of schemes and $\Lscr$ is a line bundle on $X$, then it is clear that $f(B(f^*(\Lscr)))\subset B(\Lscr)$. Similarly, we have an inclusion $f(\BB (f^*(\Lscr)))\subset \BB(\Lscr)$, in other words
\begin{equation}
\BB (f^*(\Lscr))\subset f^{-1}(\BB(\Lscr)).
\end{equation}

\begin{proposition}\label{prop-fineet-base}
Let $K$ be an algebraically closed field and let $X,Y$ be $K$-varieties. Assume that $f\colon Y\to X$ is a finite etale map. For any line bundle $\Lscr$ on $X$, we have 
\begin{equation}
\BB (f^*(\Lscr)) = f^{-1}(\BB(\Lscr)).
\end{equation}
\end{proposition}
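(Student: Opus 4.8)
The plan is to prove the non-trivial inclusion $f^{-1}(\BB(\Lscr)) \subseteq \BB(f^*(\Lscr))$; the opposite inclusion $\BB(f^*(\Lscr)) \subseteq f^{-1}(\BB(\Lscr))$ holds for an arbitrary morphism and was observed just before the statement. Concretely, I would fix a point $y \in Y$ with $y \notin \BB(f^*(\Lscr))$ and show that $x := f(y) \notin \BB(\Lscr)$. By definition of the stable base locus there is an integer $d \geq 1$ and a section $s \in H^0(Y, f^*(\Lscr^{\otimes d}))$ with $s(y) \neq 0$, and the task is to manufacture from $s$ a section of some power of $\Lscr$ on $X$ which does not vanish at $x$.

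The tool for this is the family of symmetric-function maps $c_{i,\Lscr^{\otimes d}} \colon H^0(Y, f^*(\Lscr^{\otimes d})) \to H^0(X, \Lscr^{\otimes di})$ constructed in the previous subsection, for $i = 1, \dots, n$ where $n = \deg(f)$; here $c_{1,\Lscr^{\otimes d}}$ is the trace map and $c_{n,\Lscr^{\otimes d}}$ is the norm $\Norm_{Y/X}$. I would first record how these behave pointwise. Let $y_1 = y, y_2, \dots, y_n$ be the points of the fibre $f^{-1}(x)$, which are pairwise distinct since $f$ is finite étale and $K$ is algebraically closed. A local trivialisation of $\Lscr^{\otimes d}$ near $x$ identifies $f^*(\Lscr^{\otimes d})$ with $\Ocal_Y$ in a neighbourhood of the fibre, turns $s$ into a regular function with values $s(y_1), \dots, s(y_n)$, and makes the operator ``multiplication by $s$'' on $(f_*\Ocal_Y)_x$ diagonal with eigenvalues $s(y_1), \dots, s(y_n)$; hence the value of $c_{i,\Lscr^{\otimes d}}(s)$ at $x$ equals, up to sign, the $i$-th elementary symmetric function $e_i\big(s(y_1), \dots, s(y_n)\big)$, and whether it is non-zero does not depend on the chosen trivialisation (a change of trivialisation by a unit $u$ near $x$ rescales it by $u(x)^i \neq 0$).

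The elementary but decisive observation is that $\prod_{j=1}^n \big(T + s(y_j)\big) = \sum_{i=0}^n e_i\big(s(y_1), \dots, s(y_n)\big)\,T^{\,n-i}$ cannot equal the monomial $T^n$, since $s(y_1) = s(y) \neq 0$ forces the tuple $(s(y_1), \dots, s(y_n))$ to be non-zero. Therefore $e_i\big(s(y_1), \dots, s(y_n)\big) \neq 0$ for at least one $i \in \{1, \dots, n\}$, and for that $i$ the section $c_{i,\Lscr^{\otimes d}}(s) \in H^0(X, \Lscr^{\otimes di})$ does not vanish at $x$. Consequently $x \notin B(\Lscr^{\otimes di})$, hence $x \notin \BB(\Lscr)$, which is what we wanted; combined with the easy inclusion this yields the asserted equality.

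I expect the only real subtlety to be the very first step — realising that one must use the entire family $c_{i,\Lscr^{\otimes d}}$ rather than just the norm. The given section $s$ is controlled only at the single point $y$ and may well vanish at other points of the fibre $f^{-1}(x)$, so $\Norm_{Y/X}(s)(x) = \prod_j s(y_j)$ can be $0$ and there is no direct way to symmetrise $s$ over the fibre; the trace alone is also insufficient in characteristic $p$ (for instance when all the $s(y_j)$ equal a common non-zero value and $p \mid n$, all power sums vanish). The passage through all the intermediate maps, together with the remark that a non-zero tuple has some non-vanishing elementary symmetric function, is exactly what resolves this — everything else is routine bookkeeping with the local description of the $c_{i,\Lscr^{\otimes d}}$ already set up in the preceding subsection.
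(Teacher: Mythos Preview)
Your proof is correct and follows essentially the same approach as the paper: both arguments use the full family of symmetric-function maps $c_{i}$ together with the observation that the elementary symmetric functions of a tuple all vanish if and only if every entry is zero, i.e., $(X-z_1)\cdots(X-z_n)=X^n$ forces $z_j=0$ for all $j$. The only cosmetic difference is that you argue by contrapositive (starting from $s(y)\neq 0$ and producing a non-vanishing $c_i(s)$ at $x$), whereas the paper argues directly (starting from $x\in\BB(\Lscr)$, so all $c_i(s)$ vanish at $x$, and concluding $s$ vanishes on the whole fibre).
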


\begin{proof}
Let $y\in Y$ and $x\colonequals f(y)\in X$ such that $x\in \BB(\Lscr)$. We need to show that $y\in \BB(f^*(\Lscr))$. Let $s\in H^0(Y,f^*(\Lscr)^{\otimes d})$ be a global section. We will show that $s$ vanishes at each point of $f^{-1}(x)$. By assumption, for any $0\leq i \leq n-1$ (where $n=\deg(f)$), the section $c_i(s)\in H^0(X,\Lscr^{\otimes id})$ vanishes at $x$. Let $U=\spec(A)$ be an open affine neighborhood of $x$ and write $f^{-1}(U)=\spec(B)$. We may assume that $B$ is a free $A$-module and that $\Lscr$ is trivial on $U$. Since $K$ is algebraically closed, it suffices to consider closed points, hence we may assume that $x$ corresponds to a maximal ideal $\mfr\subset A$. The fiber $f^{-1}(x)$ consists of maximal ideals $\mfr_1, \dots, \mfr_n$ of $B$. Taking tensor product with $A/\mfr$, we obtain a Cartesian diagram
\begin{equation}
\xymatrix@M=5pt{
B \ar[r]^{c_i} \ar[d] & A \ar[d] \\
B/\mfr B \ar[r]^{c_i} & A/\mfr
}
\end{equation}
Since $K$ is algebraically closed, we have $ A/\mfr=K$. The inclusion $\mfr B\subset \bigcap_{i=1}^n \mfr_i = \prod_{i=1}^n \mfr_i$, yields a natural projection 
\begin{equation}
B/\mfr B\to B/\prod_{i=1}^n \mfr_i = \prod_{i=1}^n B/\mfr_i = K^n.    
\end{equation}
Since $B$ is a free $A$-module of rank $n$, $B/\mfr B$ has dimension $n$ over $K$, so we deduce that this projection map is an isomorphism. In particular, the lower horizontal map in the above Cartesian diagram identifies with the map
\begin{equation}
   c_i \colon  K^n\to K, \quad (x_1,\dots,x_n)\mapsto c_i(x_1,\dots,x_n)
\end{equation}
where $c_i(x_1,\dots,x_n)$ is the coefficient of degree $i$ in the polynomial
\begin{equation}
    (X-x_1)(X-x_2)\dots (X-x_n)
\end{equation}
Write $z_i\colonequals s + \mfr_i \in K$ for the value of $s$ at the point $\mfr_i\in \spec(B)$. By assumption, we have $c_i(z_1,\dots,z_n)=0$ for all $0\leq i \leq n-1$. It follows that $(X-z_1)(X-z_2)\dots (X-z_n) = X^n$, and therefore $z_i=0$ for all $i$. This terminates the proof.
\end{proof}

\subsection{Hecke-equivariance and automorphic vector bundles}

We apply Proposition \ref{prop-fineet-base} in the context of Shimura varieties. First, we need to define Hecke operators on the level of the flag space $\Flag(S_K)$. For each compact open subgroup $K^p\subset \mathbf{G}(\AA_f^p)$ and each $g\in \mathbf{G}(\AA_f^p)$, recall that we have the Hecke correspondance given by diagram \eqref{hecke-diag}. Passing to the special fiber and taking the fiber product with $\GF^\mu$ over $\GZip^\mu$, we obtain a diagram
\begin{equation}
    \xymatrix{
& \Flag(S_{K\cap g K^p g^{-1}}) \ar[dl]_{\pi_{\flag}}  \ar[dr]^{g_{\flag}} & \\
\Flag(S_{K}) &  & \Flag(S_{K})
}
\end{equation}
with finite etale maps. We deduce from this that for any $\lambda\in X^*(\mathbf{T})$, there is a natural action of the Hecke algebra $\TT_{K^p}\colonequals \TT_{K^p}(\mathbf{G})$ on the space $H^0(\Flag(S_{K}),\Vcal_{\flag}(\lambda))$. Similarly, the above diagram is obviously compatible with the flag stratifications, so we obtain actions of $\TT_{K^p}$ on the spaces
\begin{equation}
 H^0(\Flag(S_{K})_w,\Vcal_{\flag}(\lambda)) \quad \textrm{and} \quad H^0(\overline{\Flag}(S_{K})_w,\Vcal_{\flag}(\lambda))   
\end{equation}
for any $w\in W$. Furthermore, $\TT_{K^p}$ acts by correspondences on the 0-cycles of $\Flag(S_{K})_w$ and $\overline{\Flag}(S_{K})_w$. For $x\in \Flag(S_{K})$ and $g\in \mathbf{G}(\AA_f^p)$, we define a 0-cycle
\begin{equation}\label{TKp-flag}
    T_{K^p,g}(x) = g_{\flag, *} \pi_{\flag}^*(x)
\end{equation}
similarly to the case of $S_{K}$ (see \eqref{TKp}). We view equation \eqref{TKp-flag} as a formal sum of points of $\Flag(S_{K})$.

\begin{definition}
Let $A$ be a subset of $S_{K}$ (resp. $\Flag(S_{K})$). We say that $A$ is stable by Hecke operators if for any $x\in A$ and any $g\in \mathbf{G}(\AA_f^p)$, all points in the formal sum $T_{K^p,g}(x)$ lie in $A$.
\end{definition}

Fix $w\in W$ and $\lambda\in X^*(T)$. Write
\begin{equation}
    \BB_{K^p,w}(\lambda)
\end{equation}
for the stable base locus of the line bundle $\Vcal_{\flag}(\lambda)$ restricted to  $\overline{\Flag}(S_{K})_w$ (the Zariski closure of $\Flag(S_{K})_w$, endowed with the reduced structure). By Proposition \ref{prop-fineet-base}, for any inclusion $K'^{p}\subset K^p$ of compact open subgroups of $\mathbf{G}(\AA_f^p)$, we have
\begin{equation}\label{binfty-Kp}
    \pi_{\flag}^{-1}( \BB_{K^p,w}(\lambda)) =  \BB_{K'^{p},w}(\lambda)
\end{equation}
where $\pi=\pi_{K',K}$ denotes the change of level map with respect to $K'^p\subset K^p$. We deduce the following result:

\begin{theorem}\label{B-hecke-stable}
The set $\BB_{K^p,w}(\lambda)$ is stable by Hecke operators.
\end{theorem}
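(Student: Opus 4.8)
The plan is to reduce the claim to the level-change compatibility \eqref{binfty-Kp}, which already encodes the action of one of the two maps in the Hecke correspondence \eqref{hecke-diag}. Recall that for $g\in \mathbf{G}(\AA_f^p)$ the Hecke operator $T_{K^p,g}$ is built from the two finite etale maps $\pi_{\flag}\colon \Flag(S_{K\cap gKg^{-1}})\to \Flag(S_K)$ (a change-of-level map) and $g_{\flag}\colon \Flag(S_{K\cap gKg^{-1}})\to \Flag(S_K)$ (induced by $x\mapsto gx$), and that $g_{\flag}$ is itself, up to relabeling the moduli data by the isomorphism $K\cap gKg^{-1}\to K\cap g^{-1}Kg$ given by conjugation by $g$, again a change-of-level map. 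Both maps are compatible with the flag stratifications and with the line bundle $\Vcal_{\flag}(\lambda)$: indeed $g_{\flag}^*\Vcal_{\flag}(\lambda)=\Vcal_{\flag}(\lambda)$ and $\pi_{\flag}^*\Vcal_{\flag}(\lambda)=\Vcal_{\flag}(\lambda)$ since $\Vcal_{\flag}(\lambda)$ is pulled back from the stack $\GF^\mu$ via $\zeta_{\flag}$, and all the maps in the correspondence commute with the maps to $\GF^\mu$ (this is exactly how the correspondence on the flag space was constructed, by taking fiber products over $\GZip^\mu$).

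First I would fix $x\in \BB_{K^p,w}(\lambda)$ and $g\in \mathbf{G}(\AA_f^p)$, and let $y\in T_{K^p,g}(x)$ be a point in the support of the $0$-cycle $g_{\flag,*}\pi_{\flag}^*(x)$. Then there is a point $\widetilde{x}\in \Flag(S_{K\cap gKg^{-1}})_w$ with $\pi_{\flag}(\widetilde{x})=x$ and $g_{\flag}(\widetilde{x})=y$. Since $\pi_{\flag}$ is finite etale and restricts to a finite etale map $\overline{\Flag}(S_{K\cap gKg^{-1}})_w\to \overline{\Flag}(S_K)_w$ on the (reduced) closures of the flag strata, Proposition \ref{prop-fineet-base} applied to this map gives $\BB(\pi_{\flag}^*(\Vcal_{\flag}(\lambda)|_{\overline{\Flag}(S_K)_w})) = \pi_{\flag}^{-1}(\BB_{K^p,w}(\lambda))$; as $\pi_{\flag}^*\Vcal_{\flag}(\lambda)=\Vcal_{\flag}(\lambda)$ this means $\widetilde{x}\in \BB_{K\cap gKg^{-1},w}(\lambda)$, the stable base locus of $\Vcal_{\flag}(\lambda)$ on $\overline{\Flag}(S_{K\cap gKg^{-1}})_w$. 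Now apply the same proposition to $g_{\flag}$: it too restricts to a finite etale map of the closed flag strata and satisfies $g_{\flag}^*\Vcal_{\flag}(\lambda)=\Vcal_{\flag}(\lambda)$, so $\BB_{K\cap gKg^{-1},w}(\lambda)= g_{\flag}^{-1}(\BB_{K^p,w}(\lambda))$. Hence $\widetilde{x}\in g_{\flag}^{-1}(\BB_{K^p,w}(\lambda))$, i.e. $y=g_{\flag}(\widetilde{x})\in \BB_{K^p,w}(\lambda)$, which is what we wanted.

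The only genuinely substantive input is Proposition \ref{prop-fineet-base}, which is already proved in the excerpt, and the observation that $g_{\flag}$ qualifies as (a twist of) a change-of-level map so that the analogue of \eqref{binfty-Kp} holds for it; this is where I would be slightly careful, since $g$-translation changes the level group from $K$ to $g^{-1}Kg\cap K$ rather than merely shrinking it, so strictly speaking one should phrase the statement as an equality of stable base loci under the isomorphism of Shimura varieties (and their flag spaces) induced by conjugation by $g$, together with the fact that this isomorphism carries $\Vcal_{\flag}(\lambda)$ to $\Vcal_{\flag}(\lambda)$ and flag strata to flag strata — all of which follow from the compatibility of $\zeta_{\flag}$ and the stratification with the $\mathbf{G}(\AA_f^p)$-action. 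Beyond that, the argument is a formal diagram chase, and the main (modest) obstacle is just bookkeeping the two halves of the correspondence consistently.
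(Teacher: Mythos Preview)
Your proof is correct and follows essentially the same route as the paper: lift $x$ to $\widetilde{x}\in\BB_{K\cap gKg^{-1},w}(\lambda)$ using \eqref{binfty-Kp} (Proposition~\ref{prop-fineet-base} for $\pi_{\flag}$), then push down via $g_{\flag}$. The paper's proof is terser and only uses the easy inclusion $g_{\flag}(\BB(g_{\flag}^*\Lscr))\subset\BB(\Lscr)$ for the second leg rather than invoking Proposition~\ref{prop-fineet-base} again, but your more careful treatment of why $g_{\flag}^*\Vcal_{\flag}(\lambda)=\Vcal_{\flag}(\lambda)$ (via compatibility with $\zeta_{\flag}$) fills in exactly the detail the paper leaves implicit.
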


\begin{proof}
Let $x\in \BB_{K^p,w}(\lambda)$ and $g\in \mathbf{G}(\AA_f^p)$. By \eqref{binfty-Kp}, any point in $\pi_{\flag}^{-1}(x)$ lies in the set $\BB_{K\cap gKg^{-1},w}(\lambda)$. Therefore, $g_{\flag}(\pi_{\flag}^{-1}(x))$ is contained in $\BB_{K^p,w}(\lambda)$.
\end{proof}

In the case $w=w_{\max}$, this theorem has the following consequence:

\begin{corollary}\label{cor-sbl-zero}
Assume that $\mathbf{E}_{\pfr}=\QQ_p$. Let $\lambda\in X^*(T)$ and assume that 
\begin{equation}
    \BB_{K^p,w_{\max}}(\lambda) \neq \emptyset.
\end{equation}
Then $\BB_{K^p,w_{\max}}(\lambda)=\overline{\Flag}(S_{K})_{w_{\max}}$. In other words, for any $r\geq 1$, one has 
\begin{equation}
    H^0(\overline{\Flag}(S_{K})_{w_{\max}}, \Vcal_{\flag}(\lambda)^{\otimes r}) = 0.
\end{equation}
\end{corollary}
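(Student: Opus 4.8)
The plan is to combine the Hecke-stability of the stable base locus (Theorem~\ref{B-hecke-stable}) with Chai's density theorem for prime-to-$p$ Hecke orbits of ordinary points (Theorem~\ref{thm-Hx-dense}, which applies precisely because $\mathbf{E}_{\pfr}=\QQ_p$); the essential work is to transport the density statement from $S_K$ up to the flag space along $\pi_K$. Write $V\colonequals\overline{\Flag}(S_K)_{w_{\max}}$, $\Lscr\colonequals\Vcal_{\flag}(\lambda)|_{V}$ and $\BB\colonequals\BB_{K^p,w_{\max}}(\lambda)=\bigcap_{r\geq 1}B(\Lscr^{\otimes r})$. Recall that $\Flag(S_K)_{w_{\max}}$ is open and dense in $V$, that $\pi_K$ restricts to a finite etale surjection $\Flag(S_K)_{w_{\max}}\to S_K^{\ord}$, and that $\pi_K$ is proper (it is the base change along $\zeta_K$ of the proper morphism $\pi\colon\GF^\mu\to\GZip^\mu$ with fibres $P/B$), so $\pi_K|_V\colon V\to S_K$ is proper and surjective; since $S_K$ is smooth over $k$ and $S_K^{\ord}\neq\emptyset$, the scheme $V$ is reduced of pure dimension $\dim S_K$.

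The key step is to show that for every $\xi\in\Flag(S_K)_{w_{\max}}$ the prime-to-$p$ Hecke orbit $\Hcal^p(\xi)$ is Zariski dense in $V$. Since $\Flag(S_{K'})=S_{K'}\times_{S_K}\Flag(S_K)$, the Hecke correspondence squares on $\Flag(S_K)$ are Cartesian over those on $S_K$; hence $\pi_K(\Hcal^p(\xi))=\Hcal^p(\pi_K(\xi))$, and the latter is Zariski dense in $S_K$ by Theorem~\ref{thm-Hx-dense} because $\pi_K(\xi)\in S_K^{\ord}$. As $\pi_K|_V$ is proper, $\pi_K$ carries the closed subset $\overline{\Hcal^p(\xi)}\subseteq V$ onto $S_K$, forcing $\dim\overline{\Hcal^p(\xi)}=\dim S_K=\dim V$; since $\overline{\Hcal^p(\xi)}$ is moreover Hecke-stable (the maps $\pi_{\flag}$ and $g_{\flag}$ being open, resp.\ finite, commute with taking closures) and $V$ is of pure dimension, one concludes $\overline{\Hcal^p(\xi)}=V$.

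To finish, assume $\BB\neq\emptyset$. By Theorem~\ref{B-hecke-stable}, $\BB$ is Hecke-stable, and it is closed in $V$. One first argues that $\BB$ meets the open stratum $\Flag(S_K)_{w_{\max}}$: if instead $\BB\subseteq V\setminus\Flag(S_K)_{w_{\max}}$, then by the structure of the flag stratification of $V$ over the ordinary locus of $S_K$, $\BB$ would lie over the non-ordinary locus of $S_K$, which is incompatible with $\BB$ being a nonempty stable base locus of a line bundle pulled back from the flag space. Choosing $\xi\in\BB\cap\Flag(S_K)_{w_{\max}}$, Hecke-stability gives $\Hcal^p(\xi)\subseteq\BB$, whence $\BB\supseteq\overline{\Hcal^p(\xi)}=V$ by the key step, so $\BB=V$. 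Consequently $B(\Vcal_{\flag}(\lambda)^{\otimes r})=V$ for all $r\geq 1$, and as $V$ is reduced this means $H^0(V,\Vcal_{\flag}(\lambda)^{\otimes r})=0$ for every $r\geq 1$, which is the assertion.

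The step I expect to be the crux is the density transfer — checking that the lifted Hecke orbit of an ordinary flag point exhausts all of $\overline{\Flag}(S_K)_{w_{\max}}$, every irreducible component included — together with the companion point that a nonempty Hecke-stable stable base locus cannot be confined to the boundary strata.
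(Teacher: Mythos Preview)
Your overall strategy matches the paper's --- combine Hecke-stability of $\BB$ (Theorem~\ref{B-hecke-stable}) with Hecke-orbit density (Theorem~\ref{thm-Hx-dense}) --- but the density transfer step has a real gap. From $\pi_K(\overline{\Hcal^p(\xi)})=S_K$ you correctly deduce $\dim\overline{\Hcal^p(\xi)}=\dim V$, but ``Hecke-stable, full-dimensional, and $V$ pure-dimensional'' does \emph{not} force $\overline{\Hcal^p(\xi)}=V$: a Hecke-stable closed subset could well be a proper union of irreducible components of $V$, unless you also know that the prime-to-$p$ Hecke correspondences act transitively on those components --- a fact you neither state nor prove. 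The paper handles this differently, via a separate Lemma~\ref{lemma-dense}: it uses that $\pi$ restricted to $\Flag(S_K)_{w_{\max}}$ is finite \'etale onto $S_K^{\ord}$ and, crucially, induces a \emph{bijection} on connected components (because the strata $\Fcal_{w_{\max}}$ and $\Xcal_{w_{\max}}$ of the zip stacks are connected). Given this bijection, density of $\pi(A)$ in $S_K$ forces $\overline{A}$ to have full dimension in \emph{every} connected component of $\Flag(S_K)_{w_{\max}}$; since each such component is smooth and hence irreducible, $\overline{A}=V$. Your properness/dimension argument only gives one component; the bijection is what reaches them all.

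On your second point --- that a nonempty $\BB$ must meet the open stratum --- the sentence ``incompatible with $\BB$ being a nonempty stable base locus of a line bundle pulled back from the flag space'' is not an argument: nothing a priori prevents the stable base locus of a line bundle on $V$ from lying entirely in the boundary. The paper's proof simply picks $x\in\BB$ and writes $\Hcal^p(x)\subset\Flag(S_K)_{w_{\max}}$, tacitly assuming $x$ lies in the open stratum. This is harmless in the paper's only application (the proof of Theorem~\ref{thm-Hasse-reg-wmax}), where $\BB$ is shown to contain the image of the ordinary locus of a Hilbert--Blumenthal variety and therefore visibly meets $\Flag(S_K)_{w_{\max}}$; but you were right to flag it, and you should recognize that neither your attempt nor the paper's proof actually closes this gap in the generality of the corollary as stated.
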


\begin{proof}
Since $\mathbf{E}_{\pfr}=\QQ_p$, we may use Theorem \ref{thm-Hx-dense}. Choose $x\in \BB_{K^p,w_{\max}}(\lambda)$ and consider the prime-to-$p$ Hecke orbit $\Hcal^p(x)\subset \Flag(S_{K})_{w_{\max}}$. By Theorem \ref{B-hecke-stable}, $\Hcal^p(x)$ is contained in the stable base locus $\BB_{K^p,w_{\max}}(\lambda)$. If we write $x'\colonequals \pi(x)\in S_{K}^{\rm ord}$, the Hecke orbit $\Hcal^p(x')$ is Zariski dense in $S_{K}$ by Theorem \ref{thm-Hx-dense}. By Lemma \ref{lemma-dense} below, we deduce that $\Hcal^p(x)$ is dense in $\Flag(S_{K})_{w_{\max}}$. Since $\BB_{K^p,w_{\max}}(\lambda)$ is closed, it follows that $\BB_{K^p,w_{\max}}(\lambda)=\overline{\Flag}(S_{K})_{w_{\max}}$.
\end{proof}

\begin{lemma}\label{lemma-dense}
    Let $A\subset \Flag(S_{K})_{w_{\max}} $ be a subset such that $\pi(A)$ is Zariski dense in $S_{K}$. Then $A$ is Zariski dense in $\Flag(S_{K})_{w_{\max}}$.
\end{lemma}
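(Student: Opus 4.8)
The plan is to use that, as recalled in \S\ref{sec-flag-strata}, the projection $\pi$ restricts to a \emph{finite etale} morphism $\pi\colon \Flag(S_K)_{w_{\max}}\to S_K^{\ord}$ onto the ordinary locus, which is the open dense Ekedahl--Oort stratum of $S_K$. Since $\pi(A)\subseteq S_K^{\ord}$, the hypothesis that $\pi(A)$ is Zariski dense in $S_K$ is equivalent to $\pi(A)$ being Zariski dense in $S_K^{\ord}$; and it will be enough to prove that $A\cap Y_0$ is dense in $Y_0$ for every connected component $Y_0$ of $\Flag(S_K)_{w_{\max}}$, there being only finitely many of them.

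First I would pin down the component structure. Since $\zeta$ is smooth and $S_K^{\ord}$ is smooth over $k$, both $\Flag(S_K)_{w_{\max}}$ and $S_K^{\ord}$ are smooth over $k$, so for each of them the connected components coincide with the irreducible components and are pairwise disjoint open-and-closed subschemes. Hence, for every connected component $C$ of $S_K^{\ord}$, the preimage $\pi^{-1}(C)$ is open and closed in $\Flag(S_K)_{w_{\max}}$ and is a finite etale cover of $C$. The essential geometric input --- which I would invoke rather than reprove --- is that $\pi^{-1}(C)$ is \emph{connected}, i.e. that $\pi$ induces a bijection on the sets of connected components. For Siegel-type varieties this is of the same nature as, and can be deduced from, the maximality of the $p$-adic monodromy of the ordinary locus (which also underlies the density statement of Theorem \ref{thm-Hx-dense} invoked in Corollary \ref{cor-sbl-zero}).

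Granting this, the conclusion is a one-component-at-a-time dimension count. Fix a connected component $Y_0$ of $\Flag(S_K)_{w_{\max}}$ and put $C\colonequals \pi(Y_0)$; by the previous paragraph $C$ is a connected component of $S_K^{\ord}$ with $\pi^{-1}(C)=Y_0$. The image of $A\cap Y_0$ in $C$ equals $\pi(A)\cap C$, which is dense in $C$ because $\pi(A)$ is dense in $S_K^{\ord}$. Let $Z_0\subseteq Y_0$ be the Zariski closure of $A\cap Y_0$. Since $\pi$ is finite it is closed, so $\pi(Z_0)$ is closed in $C$ and contains the dense set $\pi(A\cap Y_0)$; hence $\pi(Z_0)=C$ and $\pi|_{Z_0}\colon Z_0\to C$ is finite and surjective. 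Consequently $\dim Z_0=\dim C=\dim Y_0$, the last equality because $\pi|_{Y_0}$ is etale. As $Y_0$ is irreducible and $Z_0$ is a closed subset of the same dimension, $Z_0=Y_0$. Ranging over all components $Y_0$ gives $\overline{A}=\Flag(S_K)_{w_{\max}}$, as claimed.

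I expect the connectedness statement in the second paragraph to be the only genuine difficulty. It cannot be omitted: for an arbitrary finite etale cover the lemma is false (take a split double cover and let $A$ be a single sheet), so one really must use the specific structure of the ordinary flag space. The remaining ingredients --- closedness of finite morphisms, smoothness of the two schemes, and the dimension count --- are formal.
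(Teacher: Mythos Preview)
Your proof follows exactly the same architecture as the paper's: reduce to the finite \'etale map $\pi\colon \Flag(S_K)_{w_{\max}}\to S_K^{\ord}$, argue that $\pi$ induces a bijection on connected components, and then finish with a dimension count using smoothness and irreducibility. The only substantive difference is in how you justify the bijection on components.

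You appeal to maximality of the $p$-adic monodromy over the ordinary locus, which is a deep arithmetic input specific to Siegel (or at least PEL) varieties, and you leave the deduction somewhat implicit. The paper instead obtains the bijection from a purely stacky argument: the Cartesian square
\[
\xymatrix{
\Flag(S_{K})_{w_{\max}} \ar[r]^-{\zeta_{\flag}} \ar[d]_{\pi} & \Fcal_{w_{\max}} \ar[d] \\
S_{K}^{\ord} \ar[r]_-{\zeta} & \Xcal_{w_{\max}}
}
\]
together with the fact that the stacks $\Fcal_{w_{\max}}$ and $\Xcal_{w_{\max}}$ are both \emph{connected} (they are quotients of irreducible varieties by connected groups) and the horizontal maps are smooth and surjective. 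This forces the connected cover $\Fcal_{w_{\max}}\to\Xcal_{w_{\max}}$ to remain connected after base change along each component of $S_K^{\ord}$, giving the bijection directly. This argument is both lighter and more general: it works verbatim for any Hodge-type Shimura variety, not just the Siegel case, and avoids importing monodromy results that are logically much stronger than what is needed here. Your intuition that ``one really must use the specific structure of the ordinary flag space'' is correct, but the relevant structure is already encoded in the stack of $G$-zips, not in the arithmetic of $S_K$.
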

\begin{proof}
We have a Cartesian diagram
\begin{equation}
    \xymatrix{
\Flag(S_{K})_{w_{\max}} \ar[r] \ar[d] & \Fcal_{w_{\max}} \ar[d] \\
S_{K, w_{\max}} \ar[r]_{\zeta} & \Xcal_{w_{\max}}
}
\end{equation}
Here, $\Xcal_{w_{\max}}=\Ucal_\mu$ denotes the unique open zip stratum of $\GZip^\mu$ (see section \ref{sec-param}) and $S_{K,w_{\max}}=S^{\ord}_K$ is the ordinary locus (see section \ref{sec-GZip-HT}). Furthermore, the vertical maps are surjective, finite etale (as explained in section \ref{sec-flag-strata}). The stacks $\Fcal_{w_{\max}}$ and $\Xcal_{w_{\max}}$ are connected, so the above diagram induces a bijection between connected components of $\Flag(S_{K})_{w_{\max}}$ and connected components of $S_{K}$. By the above, the Zariski closure $\overline{A}$ of $A$ intersected with any connected component of $\Flag(S_{K})_{w_{\max}}$ has dimension $\dim(S_{K})=\dim(\Flag(S_{K})_{w_{\max}})$. Since $\Flag(S_{K})_{w_{\max}}$ is smooth, each connected component is irreducible, so we deduce that $A$ is dense in $\Flag(S_{K})_{w_{\max}}$.
\end{proof}

\section{The cone conjecture and consequences}

\subsection{Statement}

The cone conjecture was first formulated in \cite{Goldring-Koskivirta-global-sections-compositio}. We can state it in the broader context of a general $k$-scheme $X$ endowed with a morphism $\zeta\colon X\to \GZip^\mu$. For such a pair $(X,\zeta)$, define $\Flag(X)$ as in \S\ref{sec-flag-strata}. For any $w\in {}^I W$, write $X_w\colonequals \zeta^{-1}(\Xcal_w)$ for the corresponding locally closed subset of $X$. Similarly, for $w\in W$ define $\Flag(X)_w$ as in equation \eqref{FlagXw-eq}. We make the following assumptions on $(X,\zeta)$:

\begin{assumption}\label{assume-conj} \ 
\begin{assertionlist}
\item $\zeta$ is smooth.
\item The restriction of $\zeta$ to any connected component of $X$ is surjective.
\item For any $w\in W$ of length $1$, the Zariski closure of $\Flag(X)_w$ is pseudo-complete.
\end{assertionlist}
\end{assumption}

In assumption (3) above, a $k$-scheme $Y$ is called pseudo-complete if any element of $\Ocal_Y(Y)$ is locally constant. For example, (3) is satisfied if $X$ is a proper $k$-scheme. Since $\GZip^\mu$ is smooth over $k$, (1) implies that $X$ is a smooth $k$-scheme. The pair $(S_K^\Sigma,\zeta^\Sigma)$ defined in section \ref{sec-tor-ext} satisfies Assumption \ref{assume-conj}, by \cite[\S 1.1.4]{Goldring-Koskivirta-GS-cone}. For $\lambda\in X^*(T)$, the pullback $\zeta^*(\Vcal_I(\lambda))$ is a vector bundle on $X$ that we denote again by $\Vcal_I(\lambda)$. Define the cone of $X$ as follows:
\begin{equation}
    C_X\colonequals \{\lambda\in X^*(T) \ | \ H^0(X,\Vcal_I(\lambda))\neq 0 \}.
\end{equation}
When $X$ is connected, one can check immediately that $C_X$ is a cone in $X^*(T)$ (i.e. an additive monoid). Write $X^*_{+,I}(T)$ for the set of $I$-dominant characters. Since $\Vcal_I(\lambda)=0$ when $\lambda$ is not $I$-dominant, we obviously have $C_X\subset X^*_{+,I}(T)$. We define similarly
\begin{equation}
    C_{\zip}\colonequals \{\lambda\in X^*(T) \ | \ H^0(\GZip^\mu,\Vcal_I(\lambda))\neq 0 \}
\end{equation}
and call $C_{\zip}$ the zip cone of $(G,\mu)$. Using the description of $H^0(\GZip^\mu,\Vcal_I(\lambda))$ given by \eqref{H0zip-eq}, the cone $C_{\zip}$ relates to the representation theory of reductive groups. Since $\zeta$ is surjective, we obtain by pull-back an injective map
\begin{equation}
  \zeta^* \colon  H^0(\GZip^\mu,\Vcal_I(\lambda)) \to H^0(X,\Vcal_I(\lambda)).
\end{equation}
This shows that we have an inclusion $C_{\zip}\subset C_X$. For a subset $C\subset X^*(T)$, we define the saturation of $C$ by:
\begin{equation}
    \Ccal \colonequals \{\lambda\in X^*(T) \ | \ \exists N\geq 1 , \ N\lambda\in C\}.
\end{equation}
If $C$ is a cone of $X^*(T)$, then so is its saturation. Write $\Ccal_{\zip}$ and $\Ccal_X$ for the saturations of $C_{\zip}$ and $C_X$ respectively.

\begin{conjecture}\label{cone-conj}
If $(X,\zeta)$ satisfies Assumption \ref{assume-conj}, we have $\Ccal_X = \Ccal_{\zip}$.
\end{conjecture}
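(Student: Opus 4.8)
The inclusion $\Ccal_{\zip}\subseteq\Ccal_X$ is already in hand: it is induced by pullback along the smooth surjection $\zeta$, so the whole content of Conjecture \ref{cone-conj} is the reverse inclusion $\Ccal_X\subseteq\Ccal_{\zip}$ — a weight supporting a nonzero global section (after passing to a positive multiple) must lie in the zip cone. For general $(X,\zeta)$ this seems out of reach, not least because $\Ccal_{\zip}$ itself has no closed-form description in terms of the root datum; so the realistic plan is to prove the conjecture, or at worst its explicit weakening obtained by replacing $\Ccal_{\zip}$ with the approximation of \cite{Goldring-Koskivirta-GS-cone}, for the families of Shimura varieties at hand. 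Throughout I would work on the flag space, using the identification $H^0(X,\Vcal_I(\lambda))=H^0(\Flag(X),\Vcal_{\flag}(\lambda))$ of \eqref{identif-lambda}, so that one studies the \emph{line} bundle $\Vcal_{\flag}(\lambda)$ along the flag stratification $(\Flag(X)_w)_{w\in W}$.

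The base case is the closure $\overline{\Flag}(X)_{w_{\max}}$ of the flag stratum over the $\mu$-ordinary locus, on which $\pi$ is finite étale onto $S_K^{\ord}$. Given a nonzero $f\in H^0(\overline{\Flag}(X)_{w_{\max}},\Vcal_{\flag}(\lambda))$ with $\lambda=(a_1,\dots,a_n,b)$, I would import known cases of the conjecture for sub-Shimura varieties: for $\GSp_{2n}$ one embeds a Hilbert--Blumenthal variety attached to a totally real field split at $p$, the embedding lifts to $\widetilde{u}_{\flag}$ with image inside $\overline{\Flag}(X)_{w_{\max}}$ by Proposition \ref{uflag-image}, and the conjecture is already known there; if some $a_i>0$ the restricted weight falls outside the Hilbert--Blumenthal cone in characteristic $p$, forcing $\widetilde{u}_{\flag}^{*}f=0$, hence $f$ vanishes along the image of $\widetilde{u}_{\flag}$. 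Since that image meets the $\mu$-ordinary flag stratum and the stable base locus $\BB_{K^p,w_{\max}}(\lambda)$ is stable under prime-to-$p$ Hecke operators (Theorem \ref{B-hecke-stable}), density of prime-to-$p$ Hecke orbits of ordinary points (Theorem \ref{thm-Hx-dense}) together with Lemma \ref{lemma-dense} make $\BB_{K^p,w_{\max}}(\lambda)$ all of $\overline{\Flag}(X)_{w_{\max}}$, so $f=0$ (Corollary \ref{cor-sbl-zero}), a contradiction. This gives $a_i\le 0$ for all $i$, which is Theorem \ref{thm-Hasse-reg-wmax}.

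The second step is to propagate these inequalities along an increasing sequence $w_{\max}=w^{(0)},w^{(1)},\dots,w^{(N)}=w_0$ in $W$, passing from the constraints valid on $\overline{\Flag}(X)_{w^{(j)}}$ to those valid on the larger $\overline{\Flag}(X)_{w^{(j+1)}}$. Given a nonzero section on $\overline{\Flag}(X)_{w^{(j+1)}}$, one divides out the largest power of the partial Hasse invariant whose vanishing locus is $\overline{\Flag}(X)_{w^{(j)}}$ inside $\overline{\Flag}(X)_{w^{(j+1)}}$; this produces a section of a twist $\Vcal_{\flag}(\lambda-m\,h_j)$, with $m\ge 0$ and $h_j$ the explicit weight of the partial Hasse invariant, which restricts non-trivially to $\overline{\Flag}(X)_{w^{(j)}}$. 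Applying the inductive hypothesis to $\lambda-m\,h_j$ on the smaller closure and translating back — using that the $h_j$ are known and that $m\ge 0$ — yields the desired inequality on $\overline{\Flag}(X)_{w^{(j+1)}}$. The weights $h_j$ of the partial Hasse invariants on the flag space carry factors of $p$ coming from the Frobenius/isogeny structure, and this is precisely where the asymmetric coefficient $1/p$ in \eqref{ineq-thm-intro} enters. Taking $j$ up to $N$ reaches $\overline{\Flag}(X)_{w_0}=\Flag(X)$ and hence bounds $C_X$; comparing the accumulated system of inequalities with the description of $C_{\zip}$ (or its sharp approximation) closes the argument.

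The main obstacle is twofold. Structurally, $\Ccal_{\zip}$ is not computable for general $G$, so one does not even know the target inequalities; the partial-Hasse-invariant propagation only ever produces an explicit cone approximating $\Ccal_{\zip}$ from above, which is why for $\GSp_{2n}$ with $n\ge 4$ one can prove only the weakened Conjecture 2 rather than $\Ccal_X=\Ccal_{\zip}$ itself. Concretely, the bottleneck is the base case on $\overline{\Flag}(X)_{w_{\max}}$: one needs enough embedded sub-Shimura data for which the conjecture is already known, positioned so that their pulled-back weights detect all of the relevant inequalities (for $\GSp_{2n}$ a single Hilbert--Blumenthal variety of degree $n$ split at $p$ happens to deliver all $n$ inequalities $a_i\le 0$ at once). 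There is no general device manufacturing such auxiliary varieties, and closing this step is exactly what each new family requires.
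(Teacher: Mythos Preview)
The statement in question is a \emph{conjecture}, not a theorem: the paper does not prove $\Ccal_X=\Ccal_{\zip}$ for general $(X,\zeta)$ satisfying Assumption~\ref{assume-conj}, and indeed states explicitly that it cannot even determine $\Ccal_{\zip}$ for $\GSp_{2n}$ when $n\geq 4$. So there is no ``paper's own proof'' to compare against.

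You recognize this yourself and pivot to outlining the strategy for the weaker, explicit statement (Conjecture~2 / Theorem~\ref{main-thm}) in the Siegel case. That outline is accurate and matches the paper's argument essentially step for step: the base case on $\overline{\Flag}(X)_{w_{\max}}$ via the Hilbert--Blumenthal embedding, Hecke-stability of the stable base locus, and Chai's density theorem (this is exactly the proof of Theorem~\ref{thm-Hasse-reg-wmax} in \S\ref{sec-Hasse-reg}); then the propagation step via a chain $w_{\max}=\tau_N<\dots<\tau_1=w_0$ with separating partial Hasse invariants whose weights lie in $\Ccal_{L-\Min}$ (this is \S\ref{subsec-auxil} and \S\ref{sec-proof}). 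Your diagnosis of the obstruction --- that the method only reaches the approximation $\Ccal_{L-\Min}^{+,I}$ rather than $\Ccal_{\zip}$ itself --- is also exactly what the paper says.

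So as a proof of the conjecture your proposal is not one, and cannot be, but as a summary of the paper's approach to its actual main theorem it is correct.
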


We retain the notation from \S\ref{sec-GZip-HT} and consider the Shimura variety $\Sscr_K$. For a field $\Omega$ endowed with a map $\Ocal_{\mathbf{E}_{\pfr}}\to \Omega$, put 
\begin{equation}
    C_K(\Omega)\colonequals \{\lambda\in X^*(T) \ | \ H^0(\Sscr_K \otimes_{\Ocal_{\mathbf{E}_{\pfr}}} \Omega, \Vcal_I(\lambda))\neq 0 \}.
\end{equation}
This set highly depends on the level $K$, but the saturation $\Ccal(\Omega)$ is independent of $K$ by \cite[Corollary 1.5.3]{Koskivirta-automforms-GZip}. Goldring and the author have started a program in \cite{Goldring-Koskivirta-global-sections-compositio,Goldring-Koskivirta-GS-cone} to determine $\Ccal(k)$ (where $k=\overline{\FF}_p$ is an algebraic closure of $\FF_p$). By the Koecher principle (Theorem \ref{thm-koecher}), the global sections of $\Vcal_I(\lambda)$ over $S_K$ and those of $\Vcal^{\Sigma}_I(\lambda)$ over $S_K^\Sigma$ coincide. Since the pair $(S_K^\Sigma,\zeta^\Sigma)$ satisfies Assumption \ref{assume-conj}, Conjecture \ref{cone-conj} can be applied to the pair $(S_K^\Sigma,\zeta^\Sigma)$ and predicts that $\Ccal(k)= \Ccal_{\zip}$.

The above conjecture is a generalization of a result of Diamond--Kassaei (\cite[Corollary 8.2]{Diamond-Kassaei}) stating that the weight of any nonzero Hilbert modular form in characteristic $p$ is spanned by the weight of partial Hasse invariants (this was proved independently in \cite{Goldring-Koskivirta-global-sections-compositio}). This result shows that the Ekedahl--Oort stratification (and hence the stack $\GZip^\mu$) encodes information about the weights of all automorphic forms. We have verified Conjecture \ref{cone-conj} in \cite{Goldring-Koskivirta-global-sections-compositio}, \cite{Goldring-Koskivirta-divisibility} for several pairs $(G,\mu)$:
\begin{theorem}[\cite{Goldring-Koskivirta-global-sections-compositio, Goldring-Koskivirta-divisibility, Koskivirta-Hilbert-strata}]
Conjecture \ref{cone-conj} holds in the following cases. In cases \textup{(b)} through \textup{(f)}, assume that $\mu$ is a minuscule cocharacter of $G$.
\begin{definitionlist}
\item $G$ is an $\FF_p$-form of $(\GL_{2,k})^n$.
\item $G=\GL_{3,\FF_p}$ or $G=\GL_{4,\FF_p}$,
\item $G=\GSp_{4,\FF_p}$,
    \item $G=\GSp_{6,\FF_p}$ and $p\geq 5$,
    \item $G=\GU(3)_{\FF_p}$,
    \item $G=\GU(4)_{\FF_p}$ and the type of $\mu$ is not $(2,2)$.
\end{definitionlist}
\end{theorem}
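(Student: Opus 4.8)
We establish the inclusion $\Ccal_X\subseteq\Ccal_{\zip}$, the reverse being immediate: since $\zeta$ is surjective (Assumption \ref{assume-conj}(2)), pullback along $\zeta$ embeds $H^0(\GZip^\mu,\Vcal_I(\lambda))$ into $H^0(X,\Vcal_I(\lambda))$, so $C_{\zip}\subseteq C_X$ and hence $\Ccal_{\zip}\subseteq\Ccal_X$. By the identification $H^0(X,\Vcal_I(\lambda))=H^0(\Flag(X),\Vcal_{\flag}(\lambda))$ one replaces $X$ by its flag space and the automorphic vector bundles $\Vcal_I(\lambda)$ by the line bundles $\Vcal_{\flag}(\lambda)$; by \eqref{Vcal-formula} the cone $C_{\zip}$ is likewise the cone of weights of sections of $\Vcal_{\flag}$ on the stack $\GF^\mu$. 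So the task is: for a nonzero $f\in H^0(\Flag(X),\Vcal_{\flag}(\lambda))$, produce $N\geq1$ with $N\lambda$ the weight of a section on $\GF^\mu$.

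The engine is the family of partial Hasse invariants on $\GF^\mu$ (\cite{Goldring-Koskivirta-Strata-Hasse}): each codimension-one flag stratum $\overline{\Fcal}_w$ is the zero divisor of a section $h_w\in H^0(\GF^\mu,\Vcal_{\flag}(\hw(w)))$ for an explicit weight $\hw(w)\in X^*(T)$, and these weights all lie in $C_{\zip}$. Pulling the $h_w$ back to $\Flag(X)$ and dividing, one writes $f=\big(\prod_w h_w^{m_w}\big)\cdot f'$ with $f'$ vanishing identically on no codimension-one flag stratum; then $\lambda=\sum_w m_w\hw(w)+\mathrm{wt}(f')$, and since the first summand lies in $C_{\zip}$ it remains to bound $\mathrm{wt}(f')$. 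For this one runs an induction along the Bruhat order on $W$, propagating from the $\mu$-ordinary flag stratum $w_{\max}=w_{0,I}w_0$ out to $w_0$: given nonvanishing of a section on $\overline{\Flag}(X)_w$ one reads off its vanishing order along each codimension-one sub-stratum $\overline{\Flag}(X)_{w'}$, divides by the corresponding partial Hasse invariant to descend to $\overline{\Flag}(X)_{w'}$, and thereby transports the inequalities known there (by induction) to a, possibly weaker, inequality on $\lambda$. The base case, control of sections on $\overline{\Flag}(X)_{w_{\max}}$, is where Theorem \ref{B-hecke-stable} and Corollary \ref{cor-sbl-zero} enter: the stable base locus of $\Vcal_{\flag}(\lambda)$ on $\overline{\Flag}(X)_{w_{\max}}$ is prime-to-$p$ Hecke stable, hence by density of ordinary Hecke orbits (Theorem \ref{thm-Hx-dense}) it is either empty or all of $\overline{\Flag}(X)_{w_{\max}}$; combined with vanishing statements on embedded sub-Shimura varieties for which the conjecture is already known (\eg Hilbert--Blumenthal varieties), this pins down the cone on the ordinary flag stratum.

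This reduces Conjecture \ref{cone-conj}, for each $(G,\mu)$, to a finite comparison in the root datum: one computes $\Ccal_{\zip}$ from the group-theoretic description of $H^0(\GZip^\mu,\Vcal_I(\lambda))$ as the $L(\FF_p)$-invariant part of the ``$\leq 0$'' piece of $V_I(\lambda)=\Ind_B^P(\lambda)$ (\cite{Koskivirta-automforms-GZip}), and checks that the system of inequalities harvested above carves out exactly this cone. Case (a) is the theorem of Diamond--Kassaei (\cite{Diamond-Kassaei}, reproved in \cite{Goldring-Koskivirta-global-sections-compositio}), where $\Ccal_{\zip}$ is spanned by the partial Hasse weights themselves; cases (b), (c) are treated in \cite{Goldring-Koskivirta-global-sections-compositio} and (d)--(f) in \cite{Goldring-Koskivirta-divisibility}, with the Hilbert stratification of \cite{Koskivirta-Hilbert-strata} as an auxiliary input.

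The main obstacle is precisely this last comparison. There is no formal reason the inequalities extracted stratum-by-stratum should cut out $\Ccal_{\zip}$ on the nose; indeed for $\GSp_{2n}$ with $n\geq4$ they only yield the weaker bounds of Conjecture 2, and $\Ccal_{\zip}$ itself is then unknown. For the listed cases the gap closes by an explicit facet-by-facet check, which is feasible only because $\mu$ is minuscule --- keeping the closure poset of zip strata and the weight combinatorics of $V_I(\lambda)$ small --- and because the rank is low; the excluded sub-cases ($p\geq5$ for $\GSp_6$, type $\neq(2,2)$ for $\GU(4)$) mark exactly where this finite verification still works.
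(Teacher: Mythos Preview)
This theorem carries no proof in the paper: it is a pure citation of prior work, quoted as background before the paper develops its own new method. So there is no ``paper's own proof'' to compare against.

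Your sketch conflates the method of the cited references with the method of \emph{this} paper. The inductive propagation along the Bruhat order via partial Hasse invariants is indeed common to both, but the base case --- control of sections on $\overline{\Flag}(X)_{w_{\max}}$ --- is handled very differently. You invoke Theorem~\ref{B-hecke-stable}, Corollary~\ref{cor-sbl-zero}, and embedded Hilbert--Blumenthal varieties; those are precisely the innovations of the present paper, introduced to treat $\GSp_{2n}$, and they postdate the cited results. The references \cite{Goldring-Koskivirta-global-sections-compositio, Goldring-Koskivirta-divisibility} instead compute $\Ccal_{\zip}$ directly from the representation-theoretic description of $H^0(\GZip^\mu,\Vcal_I(\lambda))$ and verify by explicit low-rank case analysis (divisibility arguments, direct inspection of the small stratification posets) that the inequalities extracted from the flag strata cut out exactly this cone --- no Hecke-orbit density and no auxiliary Shimura embeddings.

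Your sketch also contains a circularity: you propose to control the base stratum by embedding Hilbert--Blumenthal varieties ``for which the conjecture is already known'', but case~(a) \emph{is} the Hilbert--Blumenthal case, so it cannot be an input to its own proof. Finally, the method you describe, as actually carried out in this paper, yields only $\Ccal_X\subset \Ccal_{L-\Min}^{+,I}$ (Conjecture~2), not the sharp equality $\Ccal_X=\Ccal_{\zip}$ (Conjecture~\ref{cone-conj}); the paper explicitly says it cannot close this gap for $\GSp_{2n}$ with $n\geq 4$, so the approach you outline would not, by itself, reprove the cited theorem even in the Siegel cases (c) and (d).
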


In particular, the above result applies to Hilbert modular varieties, $A_1$-type Shimura varieties (see \cite[\S 2.5]{Koskivirta-Hilbert-strata} for the terminology), several unitary Shimura varieties at split or inert primes of good reduction, and Siegel-type Shimura varieties in rank $\leq 3$.

\subsection{\texorpdfstring{Approximations of $C_{\zip}$}{}}

In general, it is a difficult problem to determine the zip cone $C_{\zip}$ or even its saturation $\Ccal_{\zip}$. In \cite{Imai-Koskivirta-zip-schubert}, we defined several subcones of $C_{\zip}$ that give first approximations. For example, define the Griffiths--Schmid cone $\Ccal_{\GS}$ by
\begin{equation}\label{CGS-eq}
\Ccal_{\GS}=\left\{ \lambda\in X^{*}(\mathbf{T}) \ \relmiddle| \ 
\parbox{6cm}{
$\langle \lambda, \alpha^\vee \rangle \geq 0 \ \textrm{ for }\alpha\in I, \\
\langle \lambda, \alpha^\vee \rangle \leq 0 \ \textrm{ for }\alpha\in \Phi^+ \setminus \Phi^+_{\mathbf{L}}$}
\right\}.
\end{equation}
The cone $\Ccal_{\GS}$ is expected to coincide with $\Ccal(\CC)$ (this seems to be known to experts, even though no explicit proof can be found in the literature; the inclusion $\Ccal(\CC)\subset \Ccal_{\GS}$ is proved explicitly in \cite{Goldring-Koskivirta-GS-cone}). Furthermore, by a mod $p$ reduction argument (\cite[Proposition 1.8.3]{Koskivirta-automforms-GZip}), we have an inclusion $C_K(\CC)\subset C_K(\overline{\FF}_p)$, and hence also $\Ccal(\CC)\subset \Ccal(\overline{\FF}_p)$. Therefore, combining Conjecture \ref{cone-conj} with the equality $\Ccal_{\GS}=\Ccal(\CC)$, we deduce that one should expect an inclusion $\Ccal_{\GS}\subset \Ccal_{\zip}$. We showed in \cite[Theorem 6.4.2]{Imai-Koskivirta-zip-schubert} that this containment always holds true, confirming this prediction. Apart from $\Ccal_{\GS}$, other natural subcones of $C_{\zip}$ are defined in \cite{Imai-Koskivirta-zip-schubert}: The cone of partial Hasse invariants $C_{\Hasse}$, the highest weight cone $C_{\hw}$, the lowest weight cone $C_{\lw}$, etc. We omit the definitions of these cones here.

The above subcones give lower bounds for the sets $\Ccal_{\zip}$ and $\Ccal(\overline{\FF}_p)$ but they do not provide an upper bound, i.e. an explicit condition on $\lambda\in X^*(T)$ for the space $H^0(S_K,\Vcal_I(\lambda))$ to vanish. In the paper \cite{Goldring-Koskivirta-GS-cone}, we determined an upper bound for $\Ccal_{\zip}$. We showed that the cone $\Ccal_{\zip}$ is contained in a (rather) explicit cone, called the unipotent-invariance cone. We omit the general definition, which is somewhat technical. The situation simplifies greatly if we make the following assumptions:
\begin{assertionlist}
\item The parabolic $P$ is defined over $\FF_{p}$.
\item The group $G$ is split over $\FF_{p^2}$.
\end{assertionlist}
In this case, one can give a quite sharp and explicit approximation from above for the cone $\Ccal_{\zip}$. Let $W_L=W(L,T)$ be the Weyl group of $L$. Note that $W_L \rtimes \Gal(\FF_{p^2}/\FF_p)$ acts naturally on the set $\Phi^+ \setminus \Phi^+_{L}$. We proved (\cite[Theorem 3.2.5]{Goldring-Koskivirta-GS-cone} and \S3.3):

\begin{proposition}
Any $\lambda\in \Ccal_{\zip}$ satisfies 
\begin{equation}\label{orb-ineq}
    \sum_{\alpha \in \Ocal\setminus S} \langle \lambda,\alpha^\vee \rangle \ + \ \frac{1}{p} \sum_{\substack{\alpha\in S}} \langle  \lambda, \alpha^\vee \rangle \leq 0
\end{equation}
for all $W_{L}\rtimes \Gal(\FF_{p^2}/\FF_p)$-orbits $\Ocal\subset \Phi^+\setminus \Phi^+_{L}$ and all subsets $S\subset \Ocal$. 
\end{proposition}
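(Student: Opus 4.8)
The plan is to reduce to the explicit model of $H^0(\GZip^\mu,\Vcal_I(\lambda))$ and then extract the linear functionals appearing in \eqref{orb-ineq}. Since those inequalities are homogeneous of degree one in $\lambda$ with vanishing right-hand side, and $\Ccal_{\zip}$ is the saturation of $C_{\zip}$, it is enough to treat $\lambda\in C_{\zip}$, i.e.\ the case $H^0(\GZip^\mu,\Vcal_I(\lambda))\neq 0$. As $P$ is defined over $\FF_p$ (assumption (i)), \cite[Theorem 3.7.2]{Koskivirta-automforms-GZip} identifies this space with $V_I(\lambda)_{\leq 0}\cap V_I(\lambda)^{L(\FF_p)}$, where $V_I(\lambda)=\Ind_B^P(\lambda)$; fix a nonzero $f$ in it, a $W_L\rtimes\Gal(\FF_{p^2}/\FF_p)$-orbit $\Ocal\subset\Phi^+\setminus\Phi_L^+$, and a subset $S\subset\Ocal$.

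A natural first move is a weight-combinatorial normalization. Every $T$-weight of $V_I(\lambda)=\Ind_B^P(\lambda)$ differs from $\lambda$ by an element of $\ZZ\Phi_L$, and since $\Ocal$ is $W_L$-stable the linear form $\xi\mapsto\sum_{\alpha\in\Ocal}\langle\xi,\alpha^\vee\rangle$ is $W_L$-invariant, hence constant — equal to its value at $\lambda$ — on the whole weight set of $V_I(\lambda)$. This replaces \eqref{orb-ineq} (at least for $S=\varnothing$) by the assertion that the common value $\sum_{\alpha\in\Ocal}\langle\chi,\alpha^\vee\rangle$ is $\leq 0$, and more generally allows one to replace $\lambda$ throughout by a conveniently chosen weight $\chi$ of $f$. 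The substantive input is then the interaction of the ``$\leq 0$'' condition with $L(\FF_p)$-invariance: $T(\FF_p)$-invariance already forces every weight of $f$ to be trivial on $T(\FF_p)$, and the full $L(\FF_p)$-invariance forces $f$ to involve only weights lying deep enough inside $V_I(\lambda)$ that, along the roots of the chosen subset $S$, the relevant pairings become divisible by $p$ — this is cleanest through the description of $V_I(\lambda)^{L(\FF_p)}$ as a finite-group-stable piece of the Brylinski--Kostant filtration, as in \cite[Theorem 3.4.1]{Imai-Koskivirta-vector-bundles}. Combining this divisibility with the $W_L$-invariant normalization above and pairing against the rational cocharacter $\sum_{\alpha\in\Ocal\setminus S}\alpha^\vee+\tfrac1p\sum_{\alpha\in S}\alpha^\vee$ then produces exactly \eqref{orb-ineq}, the factor $1/p$ being absorbed by the $p$-divisibility of the pairings along $S$. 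Running this over all pairs $(\Ocal,S)$ is the content of \cite[Theorem 3.2.5]{Goldring-Koskivirta-GS-cone} together with \S3.3 of \emph{loc.\ cit.}, which I would cite in the write-up.

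The main obstacle is precisely this last point. One only has invariance under the \emph{finite} group $L(\FF_p)$ — not under $L(\FF_{p^2})$ — and the fact that $G$ splits over $\FF_{p^2}$ (assumption (ii)) is what makes the Frobenius $\sigma$ act with order dividing $2$, so that the constraints imposed by $L(\FF_p)$ can be organized along $\sigma$-orbits and hence along $W_L\rtimes\Gal(\FF_{p^2}/\FF_p)$-orbits. Getting the bookkeeping exactly right — which weight of $f$ survives after imposing ``$\leq 0$'', and exactly how the Frobenius twist distributes the factor $1/p$ over $S$ — is the technical heart of \cite[\S3]{Goldring-Koskivirta-GS-cone}, and is what upgrades the naive ``one inequality per simple root of $\Delta\setminus I$'' to one inequality for each pair $(\Ocal,S)$.
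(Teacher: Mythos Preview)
The paper does not actually prove this proposition: it simply records it as a consequence of \cite[Theorem~3.2.5]{Goldring-Koskivirta-GS-cone} together with \S3.3 of \emph{loc.\ cit.} Your proposal ends in the same place, citing exactly the same reference, so at the level of what is actually established the two agree.

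Where your write-up differs is that you add an informal heuristic sketch before the citation. The reduction to $\lambda\in C_{\zip}$ and the use of the identification $H^0(\GZip^\mu,\Vcal_I(\lambda))\simeq V_I(\lambda)_{\leq 0}\cap V_I(\lambda)^{L(\FF_p)}$ are correct and match the setup of \cite{Goldring-Koskivirta-GS-cone}. The observation that $\sum_{\alpha\in\Ocal}\langle\,\cdot\,,\alpha^\vee\rangle$ is $W_L$-invariant and hence constant on the weight set of $V_I(\lambda)$ is also right. However, the step where you say $L(\FF_p)$-invariance ``forces the relevant pairings along $S$ to become divisible by $p$'' is too vague to stand on its own: the actual mechanism in \cite{Goldring-Koskivirta-GS-cone} goes through the unipotent-invariance cone and a careful analysis of how Frobenius interacts with the weight filtration, not a bare $T(\FF_p)$-divisibility argument. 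You acknowledge this yourself in the final paragraph. So your sketch is a reasonable motivation but not a proof, and the citation carries the weight in both your version and the paper's.
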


Following the notation and terminology of \cite{Goldring-Koskivirta-GS-cone}, we denote by $\Ccal_{L-\Min}$ the set of $\lambda\in X^*(T)$ satisfying the  above inequalities \eqref{orb-ineq}, and call $\Ccal_{L-\Min}$ the $L$-minimal cone. We also define
\begin{equation}\label{LMinplusI}
    \Ccal_{L-\Min}^{+,I}\colonequals \Ccal_{L-\Min}\cap X^*_{+,I}(T).
\end{equation}
Since $\Ccal_{\zip}$ and $\Ccal(\overline{\FF}_p)$ are always contained in the $I$-dominant cone $X^*_{+,I}(T)$, the cone $\Ccal_{L-\Min}^{+,I}$ is actually more relevant to us. The number of inequalities necessary to define $\Ccal_{L-\Min}^{+,I}$ is usually significantly smaller than for $\Ccal_{L-\Min}$, but it is difficult to determine a set of minimal inequalities for it. In particular, Conjecture \ref{cone-conj} implies the following:

\begin{conjecture}\label{conj-orb-ineq}
Let $S_K$ be the special fiber of a Hodge-type Shimura variety at a place $p$ of good reduction. Assume that $p$ splits in the reflex field $\mathbf{E}$ and that the attached reductive $\FF_p$-group $G$ is split over $\FF_{p^2}$. Then the space $H^0(S_K,\Vcal_I(\lambda))$ is zero outside of the locus defined by the inequalities \eqref{orb-ineq}.
\end{conjecture}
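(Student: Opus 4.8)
The general form of Conjecture~\ref{conj-orb-ineq} is, by construction, just the combination of Conjecture~\ref{cone-conj} with the Proposition above ($\Ccal_{\zip}\subseteq\Ccal_{L-\Min}$), so there is no genuinely new content there; the real task is to prove it \emph{unconditionally} in a case where the cone conjecture is not yet available, the natural target being $G=\GSp_{2n}$. I would work throughout on the flag space: by \eqref{identif-lambda}, the Koecher principle (Theorem~\ref{thm-koecher}) and the extension $\zeta^\Sigma$ of Section~\ref{sec-tor-ext}, one has $H^0(S_K,\Vcal_I(\lambda))=H^0(\Flag(S_K),\Vcal_{\flag}(\lambda))$, so it suffices to control global sections of the \emph{line} bundle $\Vcal_{\flag}(\lambda)$ on the flag space and on its closed flag strata $\overline{\Flag}(S_K)_w$. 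For $\GSp_{2n}$ a weight is $\lambda=(a_1,\dots,a_n,b)$, the group is split over $\FF_p$, the Levi $L$ has type $\GL_n$, and $W_L\cong\Sfr_n$ has two orbits on $\Phi^+\setminus\Phi^+_L$ (the long roots $\{2\mathbf{e}_i\}$ and the short ones $\{\mathbf{e}_i+\mathbf{e}_j\}_{i<j}$); an elementary, if slightly fiddly, computation in this root datum — using $I$-dominance to discard the redundant orbit inequalities — identifies \eqref{orb-ineq} with the nested family \eqref{ineq-thm-intro}. So the goal is: every nonzero $f\in H^0(\Flag(S_K),\Vcal_{\flag}(\lambda))$ satisfies $\sum_{i\le j}a_i+\frac{1}{p}\sum_{i>j}a_i\le 0$ for $j=1,\dots,n$.

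\emph{Step 1: the base case on the $w_{\max}$-stratum.} I would first show that a nonzero section of $\Vcal_{\flag}(\lambda)$ over $\overline{\Flag}(S_K)_{w_{\max}}$ forces $a_i\le 0$ for all $i$. Fix a totally real field $\mathbf{F}$ of degree $n$ in which $p$ splits and the associated Hilbert--Blumenthal variety; by Proposition~\ref{uflag-image} the map $\widetilde{u}_{\flag}\colon X_H\to\Flag_G$ of diagram~\eqref{diag-HG} has image inside $\overline{\Flag}(S_K)_{w_{\max}}$, and $\widetilde{u}_{\flag}^*\Vcal_{\flag}(\lambda)$ is the line bundle on $X_H$ attached via $\zeta_H$ to the restriction $\lambda'$ of $\lambda$ along $X^*(T)\to X^*(T_H)$. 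Since Conjecture~\ref{cone-conj} holds for Hilbert--Blumenthal varieties (case (a) of the theorem recalled above, ultimately Diamond--Kassaei \cite{Diamond-Kassaei}), the weight of any nonzero section of any power of this line bundle lies in the zip cone of $H$, which is generated by partial Hasse invariants; a short check with the explicit restriction map shows this is incompatible with $a_i>0$. Hence if some $a_i>0$, then $\widetilde{u}_{\flag}(X_H)\subseteq\BB_{K^p,w_{\max}}(\lambda)$, so this stable base locus is nonempty. Now invoke the apparatus already in place: $\BB_{K^p,w_{\max}}(\lambda)$ is Hecke-stable (Theorem~\ref{B-hecke-stable}), $\pi$ is finite \'etale from $\Flag(S_K)_{w_{\max}}$ onto $S_K^{\ord}$, and prime-to-$p$ Hecke orbits of ordinary points are Zariski dense in $S_K$ (Theorem~\ref{thm-Hx-dense}; here $\mathbf{E}=\QQ$, so $\mathbf{E}_\pfr=\QQ_p$ and $S_K^{\ord}\ne\emptyset$); by Lemma~\ref{lemma-dense} and Corollary~\ref{cor-sbl-zero} this forces $\BB_{K^p,w_{\max}}(\lambda)=\overline{\Flag}(S_K)_{w_{\max}}$, so $H^0(\overline{\Flag}(S_K)_{w_{\max}},\Vcal_{\flag}(\lambda)^{\otimes r})=0$ for all $r\ge 1$. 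Contrapositively, a nonzero section forces $a_i\le 0$ for every $i$.

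\emph{Step 2: propagation from $w_{\max}$ to $w_0$.} Following the strategy of \cite{Goldring-Koskivirta-GS-cone}, choose a saturated chain $w_{\max}=v_0\lessdot v_1\lessdot\dots\lessdot v_N=w_0$ in the Bruhat order of $W$, arranged so that the steps successively release the $n$ inequalities \eqref{ineq-thm-intro}. For each step $v\lessdot v'$, the closure $\overline{\Flag}(S_K)_v$ is a divisor in the normal variety $\overline{\Flag}(S_K)_{v'}$, cut out (with some multiplicity) by the restriction of a flag partial Hasse invariant $h$ — a global section of $\Vcal_{\flag}(\eta)$ on the stack $\GF^\mu$, for an explicit weight $\eta=\eta_{v\to v'}$; in particular $\eta\in C_{\zip}$, so $\eta$ satisfies \eqref{orb-ineq} by the Proposition. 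Given nonzero $f\in H^0(\overline{\Flag}(S_K)_{v'},\Vcal_{\flag}(\lambda))$, write $f=h^m g$ with $m\ge 0$ the order of vanishing along that divisor; then $g$ does not vanish identically on $\overline{\Flag}(S_K)_v$, so $g|_{\overline{\Flag}(S_K)_v}$ is a nonzero section of $\Vcal_{\flag}(\lambda-m\eta)$, and the inductive hypothesis along the chain (base case $a_i\le 0$ at $v_0$, from Step~1) applies to $\lambda-m\eta$; since $m\ge 0$ and $\eta$ itself satisfies \eqref{ineq-thm-intro}, the required inequality follows for $\lambda$. Iterating to $v_N=w_0$, where $\overline{\Flag}(S_K)_{w_0}$ is the whole toroidally compactified flag space and $H^0$ is the space of automorphic forms, yields the full list \eqref{ineq-thm-intro}, which is Conjecture~\ref{conj-orb-ineq} for $\Ascr_{n,K}$.

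The main obstacle, I expect, is Step~1, and more precisely its geometric input: verifying that $\widetilde{u}_{\flag}$ genuinely factors through the closed stratum $\overline{\Flag}(S_K)_{w_{\max}}$ and meets its open (ordinary) part rather than landing in the boundary — which is what makes the Hecke-density argument applicable, and is also where the hypothesis that $p$ splits in $\mathbf{E}$ enters, via $S_K^{\ord}\ne\emptyset$ — together with the passage from the generic-fibre cone conjecture for Hilbert--Blumenthal varieties to a statement about \emph{all} powers $\Vcal_{\flag}(\lambda)^{\otimes r}$ on the $w_{\max}$-stratum, which is what lets Corollary~\ref{cor-sbl-zero} be applied. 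A secondary, essentially clerical difficulty lies in Step~2: exhibiting the right chain in $W$ and the partial-Hasse weights $\eta_{v\to v'}$ for $\GSp_{2n}$, and the root-datum computation translating \eqref{orb-ineq} into \eqref{ineq-thm-intro}.
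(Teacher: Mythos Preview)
Your overall architecture matches the paper's exactly: Step~1 is precisely Theorem~\ref{thm-Hasse-reg-wmax} (Hilbert--Blumenthal embedding, Proposition~\ref{uflag-image}, the known cone conjecture for $H$, then Theorem~\ref{B-hecke-stable} and Corollary~\ref{cor-sbl-zero}), and Step~2 is the propagation argument of \S\ref{subsec-auxil}--\S\ref{sec-proof}.

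There is, however, a genuine error in Step~2. You assert that the partial Hasse invariant $h$ cutting out $\overline{\Flag}(S_K)_v$ inside $\overline{\Flag}(S_K)_{v'}$ is ``a global section of $\Vcal_{\flag}(\eta)$ on the stack $\GF^\mu$'', and then conclude $\eta\in C_{\zip}$, hence $\eta$ satisfies \eqref{orb-ineq}. This is not correct: the sections $\Ha_{w,\chi_\beta}$ of \S\ref{pha-sec} live only on the stratum closure $\overline{\Fcal}_{v'}$, not on all of $\GF^\mu$, so there is no reason their weights lie in $C_{\zip}$. The paper does not take this shortcut; it computes the weights explicitly (equation~\eqref{eq-ha-e}: $\ha_d^{(i)}=e_{d-i+1}-pe_{n-i}$) and then checks by hand (Lemma~\ref{lemma-ha-Lmin}) that each lies in $\Ccal_{L-\Min}$. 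This direct verification is what makes the induction go through.

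Relatedly, what you flag as ``clerical'' --- exhibiting the chain --- is less trivial than you suggest. An arbitrary saturated chain will not do: one needs each intermediate $v'$ to admit a \emph{separating system of partial Hasse invariants} (Definition~\ref{def-syst-PHI}), i.e.\ the coroots $\{\alpha^\vee:\alpha\in E_{v'}\}$ must be linearly independent, so that one can isolate the single boundary divisor $\overline{\Fcal}_v$. The paper constructs a specific chain $(\tau_d^{(i)})$ in \S\ref{subsec-auxil} and verifies this property in Proposition~\ref{prop-sep-syst}; this is a real combinatorial constraint, not an automatic feature of the Bruhat order.
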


\subsection{Partial Hasse invariants} \label{pha-sec}

For $w\in W$, define a set $E_w$ as follows:
\begin{equation}\label{def-Ew}
    E_w\colonequals \{\alpha\in \Phi^+ \ | \ w s_{\alpha}< w \ \textrm{and} \ \ell(ws_\alpha)=\ell(w)-1 \}.
\end{equation}
The elements $ws_\alpha$ for $\alpha\in E_w$ are precisely the "lower neighbors" of $w$ in $W$ with respect to the Bruhat order. For a pair $(\chi,\eta)\in X^*(T)^2$, let $\Vcal_{\Sbt}(\chi,\eta)$ be the attached line bundle on the Schubert stack $\Sbt$. We review Chevalley's formula for the stratum $\Sbt_w$. The strata contained in the Zariski closure $\overline{\Sbt}_w$ with codimension $1$ are precisely those of the form $\Sbt_{ws_\alpha}$ for $\alpha\in E_w$.

\begin{theorem}[{\cite[Theorem 2.2.1]{Goldring-Koskivirta-Strata-Hasse}}]\label{brion}
Let $w \in W$. One has the following:
\begin{assertionlist}
\item \label{brion1} $H^0\left(\Sch_w,\Vcal_{\Sch}(\chi,\eta)\right)\neq 0\Longleftrightarrow \eta = -w^{-1} \chi$.
\item \label{brion2} $\dim_k H^0\left(\Sch_w,\Vcal_{\Sch}(\chi,-w^{-1} \chi) \right)=1$.
\item \label{brion3} For any nonzero $f\in H^0\left(\Sch_w,\Vcal_{\Sch}(\chi,-w^{-1} \chi) \right)$, one has
\begin{equation}\label{briondiv}
\div(f)=-\sum_{\alpha \in E_w} \langle \chi , w\alpha^\vee \rangle \overline{\Sbt}_{w s_\alpha}.
\end{equation}
\end{assertionlist}
\end{theorem}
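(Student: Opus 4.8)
The plan is to reduce parts \textup{(1)} and \textup{(2)} to the fact that the Schubert stratum $\Sbt_w$ is a classifying stack, and part \textup{(3)} to a degree computation on a $\PP^1$, i.e.\ to the classical Chevalley formula.

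\emph{Parts (1) and (2).} First I would identify $\Sbt_w=[B\backslash BwB/B]$ with the classifying stack of $H_w\colonequals B\cap {}^{w}\!B$. Since $BwB/B$ is a single $B$-orbit (the Schubert cell), the orbit map at $\dot{w}B$ yields $BwB/B\cong B/H_w$, hence $\Sbt_w\cong [\,\mathrm{pt}/H_w\,]$. The group $H_w$ is connected with reductive quotient $T$, so a line bundle on $\Sbt_w$ is the same datum as a character of $T$, and a global section is an invariant vector in the corresponding one-dimensional representation. Unwinding the associated-sheaf construction of \cite[I.5.8]{jantzen-representations} (and tracking the sign conventions of \cite{Goldring-Koskivirta-Strata-Hasse}), $\Vcal_{\Sbt}(\chi,\eta)$ restricts on $\Sbt_w$ to the character $\chi+w\eta\in X^*(T)$. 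Therefore $H^0(\Sbt_w,\Vcal_{\Sbt}(\chi,\eta))$ is $k$ if $\chi+w\eta=0$, i.e.\ $\eta=-w^{-1}\chi$, and $0$ otherwise; this gives \textup{(1)} and \textup{(2)}, and shows moreover that a nonzero section $f$ is nowhere vanishing on $\Sbt_w$.

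\emph{Part (3).} When $\eta=-w^{-1}\chi$ the line bundle $\Vcal_{\Sbt}(\chi,\eta)$ is trivial on the open stratum $\Sbt_w$ and $f$ trivializes it there, so $f$ is a rational section of $\Vcal_{\Sbt}(\chi,\eta)$ on the closure $\overline{\Sbt}_w=[B\backslash\overline{BwB}/B]$. Since Schubert varieties are normal in all characteristics, $\overline{\Sbt}_w$ is a normal stack, so $\div(f)$ is a well-defined Weil divisor supported on $\overline{\Sbt}_w\setminus \Sbt_w$; its prime components are exactly the codimension-one boundary strata, which by the structure of the Bruhat order are the $\overline{\Sbt}_{ws_\alpha}$ for $\alpha\in E_w$ (the lower neighbours of $w$). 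Hence $\div(f)=\sum_{\alpha\in E_w} n_\alpha\,\overline{\Sbt}_{ws_\alpha}$ with $n_\alpha=\ord_{\overline{\Sbt}_{ws_\alpha}}(f)$. To compute a given $n_\alpha$ I would restrict $f$ to a complete curve $C\cong \PP^1$ inside $\overline{\Sbt}_w$ which is generically contained in $\Sbt_w$, meets $\overline{\Sbt}_{ws_\alpha}$ transversally in a single point, and avoids the remaining boundary divisors; for $\alpha$ simple one may take the fibre of the minimal-parabolic fibration $G_k/B\to G_k/P_\alpha$ through a general point of $\overline{\Sbt}_{ws_\alpha}$, and in general the appropriate dual Schubert curve. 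Because $f$ is nowhere zero on $\Sbt_w$, one gets $n_\alpha=\deg_{\PP^1}\bigl(\Vcal_{\Sbt}(\chi,-w^{-1}\chi)|_C\bigr)$; identifying this restriction with the homogeneous line bundle on $P_\alpha/B$ attached to $\eta=-w^{-1}\chi$ and applying Borel--Weil on $\PP^1$, this degree equals $\langle -w^{-1}\chi,\alpha^\vee\rangle=-\langle\chi,w\alpha^\vee\rangle$ (using $W$-invariance of the pairing), which is the asserted formula.

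\emph{Main obstacle.} The delicate point is the last step: constructing the dual curve $C$ with the correct transversality along $\overline{\Sbt}_{ws_\alpha}$ and, for \emph{non-simple} $\alpha\in E_w$, verifying that it is disjoint from the other boundary divisors $\overline{\Sbt}_{ws_\beta}$ (so that $\deg_{\PP^1}(\Vcal_{\Sbt}(\chi,\eta)|_C)$ sees only $n_\alpha$), together with pinning down the sign in the $\PP^1$-degree against the paper's convention that $\alpha>0\iff U_\alpha\subset B^+$. Equivalently, one is computing the class of $\Vcal_{\Sbt}(\chi,-w^{-1}\chi)$ in $\operatorname{Pic}(\overline{\Sbt}_w)\subset \operatorname{Cl}(\overline{\Sbt}_w)$ against the basis of codimension-one Schubert strata, which is precisely the content of Chevalley's formula; I would either import that result or follow the detailed argument of \cite[Theorem 2.2.1]{Goldring-Koskivirta-Strata-Hasse}.
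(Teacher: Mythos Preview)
The paper does not give its own proof of this statement: Theorem~\ref{brion} is quoted verbatim from \cite[Theorem~2.2.1]{Goldring-Koskivirta-Strata-Hasse} and used as a black box, so there is nothing in the paper to compare your argument against.

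Your sketch is essentially the standard route to the Chevalley formula and is sound in outline. The identification of $\Sbt_w$ with $[\mathrm{pt}/H_w]$ and the computation of the restricted character as $\chi+w\eta$ are correct and settle (1)--(2). For (3), reducing to a degree computation on a $\PP^1$ is the classical approach; the honest difficulty you flag---producing, for each $\alpha\in E_w$, a curve transverse to $\overline{\Sbt}_{ws_\alpha}$ and disjoint from the other boundary components---is exactly where the work lies, and your suggestion to invoke the Chevalley formula directly (or the argument in \cite{Goldring-Koskivirta-Strata-Hasse}) rather than redo this is appropriate. One small point: for non-simple $\alpha\in E_w$ the curve is not literally a fibre of a $G/B\to G/P_\alpha$ fibration, so the identification of $\Vcal_{\Sbt}(\chi,\eta)|_C$ with a homogeneous bundle on $P_\alpha/B$ needs adjustment; the cleaner way is to use the Bott--Samelson resolution of $\overline{\Sbt}_w$, where every boundary divisor is the image of a genuine $\PP^1$-fibre and the transversality is automatic.
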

For each $\chi\in X^*(T)$ and $w\in W$, we denote by $f_{w,\chi}$ a nonzero element in the space $H^0\left(\Sch_w,\Vcal_{\Sch}(\chi,-w^{-1} \chi) \right)$. By Theorem \ref{brion}, the section $f_{w,\chi}$ extends to the Zariski closure $\overline{\Sch}_w$ if and only if $\langle \chi, w\alpha^\vee \rangle \leq 0$ for all $\alpha\in E_w$. By \cite[Lemma 3.1.1 (b)]{Goldring-Koskivirta-Strata-Hasse}, the pullback of $\Vcal_{\Sbt}(\chi,\eta)$ via the morphism $\psi\colon \GF^\mu\to \Sbt$ defined in \eqref{psi-eq} is:
\begin{equation}
\psi^*(\Vcal_{\Sbt}(\chi,\nu))=\Vcal_{\flag}(\chi + p w_{0,I}w_0\sigma^{-1}(\nu))    
\end{equation}
(note that \loccit contains a typo; it should be $\sigma^{-1}$ instead of $\sigma$). For $w\in W$, define 
\begin{equation}\label{hw-eq}
 h_w\colon X^*(T)\to X^*(T),  \quad \chi\mapsto -w\chi + p w_{0,I}w_0\sigma^{-1}(\chi).   
\end{equation}
For all $\chi\in X^*(T)$ and all $w\in W$, define
\begin{equation}
    \Ha_{w,\chi} \colonequals \psi^*(f_{w,-w\chi}).
\end{equation}
By construction, we have
\begin{equation}
    \Ha_{w,\chi} \in H^0(\Fcal_w, \Vcal_{\flag}(h_w(\chi))).
\end{equation}
Since $\psi$ is smooth, the multiplicity of $\Ha_{w,\chi}$ along $\Fcal_{ws_\alpha}$ is the same as the multiplicity of $f_{w,-w\chi}$ along $\Sbt_{ws_\alpha}$, namely $\langle \chi,\alpha^\vee\rangle$. We are particularly interested in sections which vanish exactly on the Zariski closure of a single stratum $\overline{\Fcal}_{ws_\alpha}\subset \overline{\Fcal}_w$ (where $\alpha \in E_w$). This kind of section is sometimes called a partial Hasse invariant. 

\begin{definition}\label{def-syst-PHI}
We say that $w\in W$ admits a separating system of partial Hasse invariants if the elements $\{\alpha^\vee \ | \ \alpha\in E_w \}$ are linearly independent in the $\QQ$-vector space $X_*(T)\otimes_\ZZ \QQ$.
\end{definition}

If $w$ admits a separating system of partial Hasse invariants, then each flag stratum $\overline{\Fcal}_{ws_\alpha}\subset \overline{\Fcal}_w$ ($\alpha\in E_w$) can be cut out by a partial Hasse invariant. Indeed, by linear independence, for each $\beta \in E_w$ we can find a character $\chi_\beta\in X^*(T)$ satisfying
\begin{equation}\label{chibeta}
    \begin{cases}
        \langle \chi_\beta, \beta^\vee\rangle >0 & \textrm{and} \\
        \langle \chi_\beta, \alpha^\vee\rangle =0 & \textrm{for all } \alpha\in E_w\setminus\{\beta\}.
    \end{cases}
\end{equation}
For this character $\chi_\beta$, the corresponding section $\Ha_{w,\chi_\beta}$ is a partial Hasse invariant which vanishes exactly on $\overline{\Fcal}_{ws_\beta}$ inside $\overline{\Fcal}_w$. The weight of $\Ha_{w,\chi_\beta}$ is
\begin{equation}\label{weight-Ha}
    \ha_{w,\chi_\beta} \colonequals h_w(\chi_\beta) = -w\chi_\beta + p w_{0,I}w_0\sigma^{-1}(\chi_\beta).
\end{equation}

\subsection{Regularity of strata}
Although we are mainly interested in proving results pertaining to automorphic forms in $H^0(S_K,\Vcal_I(\lambda))$, it is useful to consider sections on smaller flag strata or zip strata, or the Zariski closures thereof. This strategy was carried out in \cite{Goldring-Koskivirta-Strata-Hasse}, where exact sequences between cohomology groups of various strata played a key role to move Hecke eigen-systems from higher degree cohomology to degree $0$. For this reason, we introduce the following, somewhat cumbersome but useful notation and terminology. For each $w\in W$, denote by $X_{+,w}^*(T)$ the set of characters $\chi\in X^*(T)$ such that $\langle \chi,\alpha^\vee\rangle \geq 0$ for all $\alpha\in E_w$. We may call such characters "$w$-dominant". Define the cone of partial Hasse invariants of $w$ by:
\begin{equation}
    C_{\Hasse,w}\colonequals h_w(X_{+,w}^*(T))
\end{equation}
where $h_w$ is the map \eqref{hw-eq}. By Chevalley's formula (Theorem \ref{brion}), the section $\Ha_{w,\chi}$ extends to $\overline{\Fcal}_w$ if and only if $\chi\in X_{+,w}^*(T)$. Therefore, the set $C_{\Hasse,w}$ is the set of all $\lambda\in X^*(T)$ such that $\Vcal_{\flag}(\lambda)$ admits a nonzero section over $\overline{\Fcal}_w$ which arises by pullback from a section over $\overline{\Sbt}_w$. In general, there are many sections on $\overline{\Fcal}_w$ which do not arise by pullback in this fashion. To account for them, we define the following cone:
\begin{equation}
    C_{\flag, w} \colonequals \{\lambda\in X^*(T) \ | \ H^0(\overline{\Fcal}_w, \Vcal_{\flag}(\lambda))\neq 0\}.
\end{equation}
By the above discussion, it is clear that $C_{\Hasse,w}\subset C_{\flag, w}$. 

Consider now a $k$-scheme $X$ endowed with a surjective map $\zeta\colon X\to \GZip^\mu$ (for example $X=S_K$ or $X=S^{\Sigma}_K$). Let $\Flag(X)$ be the flag space of $X$ (diagram \eqref{zeta-flag}). Let $(\Flag(X)_w)_{w\in W}$ be the flag stratification on $\Flag(X)$ and write $\overline{\Flag}(X)_w$ for the Zariski closure of $\Flag(X)_w$ (endowed with the reduced structure). Define similarly:
\begin{equation}\label{CXw-eq}
C_{X, w} \colonequals \{\lambda\in X^*(T) \ | \ H^0(\overline{\Flag}(X)_w, \Vcal_{\flag}(\lambda))\neq 0\}.
\end{equation}
Since $\zeta_{\flag}$ is surjective, the pullback map via $\zeta_{\flag}$ is injective, so we get inclusions
\begin{equation}\label{inclu}
    C_{\Hasse,w}\subset C_{\flag,w}\subset C_{X, w}.
\end{equation}
Note that when $w=w_0$, the identification \eqref{identif-lambda} shows that $C_{\flag, w_0} = C_{\zip}$ and similarly $C_{X,w_0}=C_{X}$. For $w=w_0$, we simply write $C_{\Hasse,w_0}=C_{\Hasse}$. Concretely, $C_{\Hasse}$ is the set of non-negative linear combinations of the weights of the (global) partial Hasse invariants, similarly to those studied in \cite{Diamond-Kassaei} and \cite{Imai-Koskivirta-partial-Hasse}. In all cones defined above, we change the letter $C$ to a calligraphic $\Ccal$ to denote the saturation.

\begin{definition} \label{def-regular} Let $w\in W$.
\begin{definitionlist}
\item We say that the flag stratum $\Flag(X)_w$ is Hasse-regular if $\Ccal_{X,w} = \Ccal_{\Hasse, w} $.
\item We say that the flag stratum $\Flag(X)_w$ is flag-regular if $\Ccal_{X,w} = \Ccal_{\flag, w}$.
\end{definitionlist}
\end{definition}
In view of the inclusions \eqref{inclu}, it is clear that any Hasse-regular stratum is flag-regular. Conjecture \ref{cone-conj} asserts that the maximal stratum $\Flag(X)_{w_0}$ is flag-regular. In certain cases, $\Flag(X)_{w_0}$ is even Hasse-regular. For example, this is the case for Hilbert--Blumenthal modular varieties,  Picard surfaces, Siegel threefolds (\cite[Theorem D]{Goldring-Koskivirta-global-sections-compositio}). However, in general the stratum $\Flag(X)_{w_0}$ is not Hasse-regular because the inclusion $\Ccal_{\Hasse} \subset \Ccal_{\zip}$ is usually strict. A complete characterization of the cases when the equality $\Ccal_{\Hasse} = \Ccal_{\zip}$ holds is given in \cite[Theorem 4.3.1]{Imai-Koskivirta-zip-schubert}. Returning to the case of Hilbert--Blumenthal modular varieties (attached to a totally real field $\mathbf{F}$), the paper \cite{Goldring-Koskivirta-global-sections-compositio} also covers the case of smaller strata. There is an explicit and straight-forward criterion (see \loccit Theorem 4.2.3) on the element $w\in W$ for the Hasse-regularity of $S_{K,w}$. When $p$ is split in $\mathbf{F}$, all strata are Hasse-regular.

The flag stratum parametrized by the element $w_{\max}=w_{0,I}w_0$ plays a central role in our strategy. In the case of PEL unitary Shimura varieties of signature $(n-1,1)$ at a split prime, Goldring and the author showed that $\Flag(X)_{w_{\max}}$ is Hasse-regular (\cite{Goldring-Koskivirta-GS-cone}). Goldring pointed out to us that it is not Hasse-regular in general. When $P$ is defined over $\FF_p$, the cone $C_{\Hasse,w_{\max}}$ has a very simple form:
\begin{equation}
    C_{\Hasse,w_{\max}}=\{\lambda\in X^*(T) \ | \ \langle \lambda, \alpha^\vee\rangle \leq 0, \ \textrm{for all} \ \alpha\in \Phi^+\setminus \Phi^+_{L} \}.
\end{equation}
In the case of PEL unitary Shimura varieties of signature $(n-1,1)$ at a split prime, the Hasse-regularity of $\Flag(X)_{w_{\max}}$ was proved by Goldring and the author (\cite{Goldring-Koskivirta-GS-cone}) using a highly computational and tedious combinatorial argument. In the present paper concerned with the case of Siegel-type Shimura varieties, we will replace this argument by a much more conceptual approach involving the density of Hecke-orbits. This Hasse-regularity result will be the main ingredient in the proof of Conjecture \ref{conj-orb-ineq}.

\section{Vanishing result for Siegel automorphic forms}

In this section, we return to the the case of the Siegel-type Shimura variety $\overline{\Ascr}_{n,K}$ (section \ref{Siegel-type-sec}). we first show the Hasse-regularity of $\Flag(X)_{w_{\max}}$, and then use it to prove Conjecture \ref{conj-orb-ineq}.

\subsection{Main result}

Recall that we defined $\Ccal^{+,I}_{L-\Min} \colonequals \Ccal_{L-\Min}\cap X^*_{+,I}(T)$ (equation \eqref{LMinplusI}). It is often the case that many of the $W_{L}\rtimes \Gal(\FF_{p^2}/\FF_p)$-orbits $\Ocal\subset \Phi^+\setminus \Phi^+_{L}$ and subsets $S\subset \Ocal$ appearing in \eqref{orb-ineq} will contribute trivially when we intersect with the $I$-dominant characters $X^*_{+,I}(T)$. Let us examine the case of $G=\GSp_{2n,\FF_p}$. We retain the notation of \S\ref{Siegel-type-sec}. Write $T$ for the diagonal torus of $G$. It is generated by the subgroup $Z\subset T$ of nonzero scalar matrices and the subtorus $T_0\colonequals T\cap \Sp_{2n}$. Explicitly, $T_0$ is given by
\begin{equation}
    T_0 = \{ \diag(t_1,\dots, t_n,t_n^{-1},\dots,t_1^{-1}) \ | \ t_i\in \GG_{\mathrm{m}} \}.
\end{equation}
Identify $X^*(T_0)=\ZZ^n$ such that $(a_1,\dots,a_n)$ corresponds to the character
\begin{equation}\label{charT-Sp}
    \diag(t_1,\dots, t_n,t_n^{-1},\dots,t_1^{-1})\mapsto \prod_{i=1}^n t_i^{a_i}.
\end{equation}
We also identify $X^*(Z)=\ZZ$, such that $b\in \ZZ$ corresponds to the character $\diag(t,\dots,t)\mapsto t^b$. The natural map $X^*(T)\to X^*(T_0)\times X^*(Z)=\ZZ^n\times \ZZ$, $\chi\mapsto (\chi|_{T_0},\chi|_Z)$ is injective, and identifies $X^*(T)$ with the set
\begin{equation}\label{XT-GSp}
    X^*(T) = \left\{(a_1,\dots,a_n,b)\in \ZZ^{n+1} \ \relmiddle| \ \sum_{i=1}^n a_i \equiv b \pmod 2 \right\}.
\end{equation}
Write $e_1,\dots, e_n$ for the canonical basis of $\ZZ^n$, and write again $e_i$ for the element $(e_i,0)\in \ZZ^{n+1}$. The positive roots of $G$ are
\begin{equation}
    \Phi^+\colonequals \{e_i\pm e_j \ | \ 1\leq i<j \leq n \} \sqcup \{2e_i \ | \ 1\leq i\leq n \}.
\end{equation}
Hence, the set $\Phi^+\setminus \Phi^+_{L}$ consists of exactly two $W_L$-orbits, namely
\begin{equation}
     \Ocal_1=\{2e_i \ | \ 1\leq i\leq n \} \quad \textrm{and} \quad \Ocal_2=\{e_i+e_j \ | \ 1\leq i<j\leq n \}.
\end{equation}
One can show that the inequalities \eqref{orb-ineq} corresponding to subsets $S\subset \Ocal_2$ become redundant when intersecting with  $X^*_{+,I}(T)$, so we may restrict to subsets of $\Ocal_1$. Similarly, only subsets of the form $S=\{2e_1,2e_2,\dots, 2e_j\}$ (for $j=1,\dots, n$) are relevant. Therefore, the cone $\Ccal^{+,I}_{L-\Min}$ can be defined inside $X^*_{+,I}(T)$ by the following inequalities:
\begin{equation}\label{ineq-thm}
 \Ccal^{+,I}_{L-\Min} = \left\{ (a_1,\dots,a_n,b)\in X^*_{+,I}(T) \ \relmiddle| \ \sum_{i=1}^j a_i + \frac{1}{p} \sum_{i=j+1}^{n} a_i \leq 0 \ \textrm{ for }j=1,\dots, n\right\}.
\end{equation}
Actually, one can show that the inequality corresponding to $j=n$ is also redundant and can be removed. The following theorem is our main result on the topic of cohomology vanishing for Siegel automorphic forms. We explain the proof in section \ref{sec-proof}.

\begin{theorem} \label{main-thm}
One has $\Ccal(\overline{\FF}_p)\subset \Ccal^{+,I}_{L-\Min}$. In other words, if the character $\lambda$ fails to satisfy any of the above inequalities \eqref{ineq-thm}, then $H^0(\overline{\Ascr}_{n,K}, \Vcal_I(\lambda))=0$.
\end{theorem}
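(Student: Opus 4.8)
The plan is to transfer the problem to the flag space and argue in two stages, exactly as outlined in the introduction. By \eqref{identif-lambda} a nonzero $f\in H^0(\overline{\Ascr}_{n,K},\Vcal_I(\lambda))$ is the same thing as a nonzero section of $\Vcal_{\flag}(\lambda)$ on $\Flag_G=\Flag(\overline{\Ascr}_{n,K})=\overline{\Flag}(X_G)_{w_0}$. Writing $L_j(\lambda)\colonequals \sum_{i=1}^{j}a_i+\tfrac1p\sum_{i=j+1}^{n}a_i$, the goal \eqref{ineq-thm} is that $L_j(\lambda)\le 0$ for $j=1,\dots,n$. The crucial book-keeping remark is that $L_j$ is precisely the linear functional attached in \eqref{orb-ineq} to the orbit $\Ocal_1=\{2e_i\}$ and the subset $S=\{2e_{j+1},\dots,2e_n\}$ (using $\langle\lambda,(2e_i)^\vee\rangle=a_i$); consequently \emph{every} weight lying in $\Ccal_{L-\Min}$ satisfies $L_j\le 0$, and a weight all of whose coordinates $a_i$ are $\le 0$ trivially lies in $\Ccal_{L-\Min}$. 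The two steps are then: (i) show that a nonzero section of $\Vcal_{\flag}(\lambda)$ on $\overline{\Flag}(X_G)_{w_{\max}}$ forces $a_i\le 0$ for all $i$; (ii) propagate this from $w_{\max}$ up to $w_0$ along a chain in $W$, at each stage either restricting the section to a smaller flag stratum or dividing it by a vertical partial Hasse invariant, in such a way that the inequalities $L_j\le 0$ are never destroyed.

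\emph{Step (i): the $\mu$-ordinary flag stratum.} This is the Hasse-regularity statement ``Theorem~2'' (Theorem~\ref{thm-Hasse-reg-wmax}). Fix a totally real field $\mathbf{F}$ of degree $n$ in which $p$ splits, with its Hilbert--Blumenthal datum (\S\ref{hb-var-sec}); for compatible levels we get the morphism $\widetilde{u}_{\flag}\colon X_H\to\Flag_G$ of \eqref{diag-HG}, where $X_H=\Flag(X_H)$ because the parabolic attached to $\mu_H$ is a Borel. By Proposition~\ref{uflag-image}, $u_{\flag}$ sends the $\mu_H$-ordinary locus of $\HZip$ into $\Fcal_{G,w_{\max}}$ and its whole image into $\overline{\Fcal}_{G,w_{\max}}$; pulling back through $\zeta_{G,\flag}$ (smooth, hence flat) gives $\widetilde{u}_{\flag}(X_H)\subset\overline{\Flag}(X_G)_{w_{\max}}$, with the $\mu_H$-ordinary locus of $X_H$ landing in the open stratum $\Flag(X_G)_{w_{\max}}$ and over classical ordinary points of $\overline{\Ascr}_{n,K}$ (recall $\mathbf{E}=\QQ$). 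Now if $f\in H^0(\overline{\Flag}(X_G)_{w_{\max}},\Vcal_{\flag}(\lambda))$ is nonzero with $a_i>0$ for some $i$, the pullback $\widetilde{u}_{\flag}^{*}f$ is a mod $p$ Hilbert modular form of weight $u^{*}\lambda$; a routine translation of \eqref{emb-split} identifies the $i$-th component of $u^{*}\lambda$ with (essentially) $a_i$, and since Conjecture~\ref{cone-conj} is known for Hilbert--Blumenthal varieties at a split prime, the sign $a_i>0$ forces $\widetilde{u}_{\flag}^{*}f=0$, and likewise $\widetilde{u}_{\flag}^{*}(f^{\otimes r})=0$ for all $r\ge 1$. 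Hence any point of $\widetilde{u}_{\flag}(X_H)\cap\Flag(X_G)_{w_{\max}}$ lying over an ordinary point belongs to the stable base locus $\BB_{K^p,w_{\max}}(\lambda)$, which is therefore nonempty; by Corollary~\ref{cor-sbl-zero} (valid since $\mathbf{E}_{\pfr}=\QQ_p$) it equals all of $\overline{\Flag}(X_G)_{w_{\max}}$, contradicting $f\ne 0$. The two engines of this step are Theorem~\ref{B-hecke-stable} (Hecke-stability of the stable base locus) and Chai's density theorem~\ref{thm-Hx-dense}.

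\emph{Step (ii): propagation from $w_{\max}$ to $w_0$.} Choose a suitable saturated chain $w_{\max}=v_N\lessdot v_{N-1}\lessdot\cdots\lessdot v_0=w_0$ in the Bruhat order on $W$, so that each $\overline{\Flag}(X_G)_{v_{k+1}}$ is a prime divisor in $\overline{\Flag}(X_G)_{v_k}$. Starting from our nonzero $f$ of weight $\lambda$ on $\overline{\Flag}(X_G)_{w_0}$, descend the chain: given a nonzero section on $\overline{\Flag}(X_G)_{v_k}$, if its restriction to $\overline{\Flag}(X_G)_{v_{k+1}}$ is nonzero, pass to $v_{k+1}$; if that restriction vanishes, the section is divisible by the vertical partial Hasse invariant cutting out $\overline{\Flag}(X_G)_{v_{k+1}}$ inside $\overline{\Flag}(X_G)_{v_k}$ (its existence and weight come from Chevalley's formula, Theorem~\ref{brion}, together with the separating-system mechanism of \S\ref{pha-sec}), so we divide and repeat with the weight decreased by a partial Hasse weight $\ha_t$. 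Since each division strictly removes effective divisorial content, the process terminates at a nonzero section on $\overline{\Flag}(X_G)_{w_{\max}}$ of weight $\lambda-\sum_t\ha_t$. By Step~(i) this weight has all coordinates $a_i\le 0$, hence $L_j(\lambda-\sum_t\ha_t)\le 0$ for all $j$; and the chain being chosen so that each $\ha_t$ lies in $\Ccal_{L-\Min}$ (hence $L_j(\ha_t)\le 0$, morally because $\Ccal_{\Hasse}\subset\Ccal_{\zip}\subset\Ccal_{L-\Min}$), we conclude $L_j(\lambda)\le L_j\bigl(\sum_t\ha_t\bigr)\le 0$ for all $j$, which is \eqref{ineq-thm}. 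In view of the description of $\Ccal^{+,I}_{L-\Min}$ in \eqref{ineq-thm}, this is exactly the assertion $\Ccal(\overline{\FF}_p)\subset\Ccal^{+,I}_{L-\Min}$ of Theorem~\ref{main-thm}.

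\emph{Main obstacle.} The real work lies in Step~(ii): one must choose the chain $v_\bullet$ so that at every stage the stratum one is about to cross is genuinely cut out, as a Cartier divisor with the correct line bundle, by an honest partial Hasse invariant — equivalently, that the relevant elements of $W(\GSp_{2n})$ admit separating systems in the sense of Definition~\ref{def-syst-PHI} — \emph{and} so that all the weights $\ha_t$ collected along the way remain inside $\Ccal_{L-\Min}$. For the intermediate elements $v_k\ne w_0$ this last point is not automatic; it amounts to verifying that $h_{v_k}\bigl(X^*_{+,v_k}(T)\bigr)\subset\Ccal_{L-\Min}$, i.e. that $L_j\circ h_{v_k}\le 0$ on $X^*_{+,v_k}(T)$, an explicit computation with the coroots of the chosen lower neighbours. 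Carrying this out requires a careful analysis of the Bruhat order and of the sets $E_v$ for $W(\GSp_{2n})\cong(\ZZ/2\ZZ)^n\rtimes\Sfr_n$, in the spirit of — but more transparent than — the unitary computation of \cite{Goldring-Koskivirta-GS-cone}. A lesser subtlety is the exact sign book-keeping in the Hilbert--Blumenthal pullback of Step~(i).
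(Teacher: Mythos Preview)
Your proposal is correct and follows essentially the same two-stage strategy as the paper: Step~(i) is precisely the paper's proof of Theorem~\ref{thm-Hasse-reg-wmax} in \S\ref{sec-Hasse-reg}, and Step~(ii) is the propagation argument of \S\ref{sec-proof} via the cone inclusion \eqref{CK-inclu-rec}. The ``main obstacle'' you flag---producing an explicit saturated chain from $w_0$ down to $w_{\max}$ in which every element admits a separating system of partial Hasse invariants \emph{and} every resulting partial Hasse weight lies in $\Ccal_{L-\Min}$---is exactly what the paper supplies in \S\ref{subsec-auxil}: the elements $\tau_d^{(i)}$ (interpolating between the $\Lambda_d$), together with Proposition~\ref{prop-sep-syst} (separating systems, via the explicit description of $E_{\tau_d^{(i)}}$ afforded by Lemma~\ref{lemma-gamma}) and Lemma~\ref{lemma-ha-Lmin} (the weights $\ha_d^{(i)}=e_{d-i+1}-pe_{n-i}$ lie in $\Ccal_{L-\Min}$).

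One minor point: your section-by-section ``divide and repeat until restriction is nonzero'' description in Step~(ii) is slightly looser than the paper's formulation, which works directly at the level of cones via \eqref{CK-inclu-rec} and thereby sidesteps any termination argument; but the content is the same.
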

In the case $n=3$, it was shown for $p\geq 5$ in \cite{Goldring-Koskivirta-divisibility} that the cone $\Ccal(\overline{\FF}_p)$ coincides with the zip cone $\Ccal_{\zip}$ (Conjecture \ref{cone-conj}) and is given as follows:
\begin{equation}
    \Ccal(\overline{\FF}_p) = \left\{ (a_1,a_2,a_3,b)\in X^*_{+,I}(T) \ \relmiddle| \ \parbox{4cm}{ $p^2 a_1 + a_2 + pa_3 \leq 0 \\
    pa_1 + p^2a_2 + a_3 \leq 0$ } \right\}.
\end{equation}
This cone is only slightly smaller than the cone $\Ccal^{+,I}_{L-\Min}$, which is given by
\begin{equation}
\Ccal^{+,I}_{L-\Min}  = \left\{ (a_1,a_2,a_3,b)\in X^*_{+,I}(T) \ \relmiddle| \ \parbox{4cm}{ $p a_1 + a_2 + a_3 \leq 0 \\
pa_1 + pa_2 + a_3 \leq 0$ } \right\}
\end{equation}
(see \cite[Figure 1]{Goldring-Koskivirta-divisibility} for a visual plot of these cones). This shows that the upper bound for $\Ccal(\overline{\FF}_p)$ provided by Theorem \ref{main-thm} is a good approximation. However, it would be interesting to determine the precise inequalities defining $\Ccal(\overline{\FF}_p)$ for general $n$.

\subsection{\texorpdfstring{Bruhat order of $W$}{}}

Recall that the Weyl group $W$ of $G=\GSp_{2n}$ can be identified with the set of permutations $w\in \Sfr_{2n}$ satisfying $w(i)+w(2n+1-i)=2n+1$ for all $1\leq i \leq 2n$. For $w\in W$, put
\begin{align}
\Mcal(w)&\colonequals \{(i,j) \ | \  1\leq i<j\leq n \ \textrm{and} \ w(i)>w(j) \} \\
\Ncal(w)&\colonequals \{(i,j) \ | \  1\leq i\leq j\leq n \ \textrm{and} \ w(i)+w(j)>2n+1 \}.
\end{align}
Write $M(w)$, $N(w)$ for the number of elements of $\Mcal(w)$ and $\Ncal(w)$ respectively. The length $\ell(w)$ of $w\in W$ is given by
\begin{equation}\label{length-eq}
    \ell(w)=M(w)+N(w).
\end{equation}
For $w\in W$ and $1\leq i,j\leq 2n$, define
\begin{equation}
    r_w(i,j)\colonequals |\{1\leq k\leq i \ | \ w(k) \leq j\}|.
\end{equation}
For two elements $w_1,w_2\in W$, one has an equivalence
\begin{equation}\label{bruhat-eq}
    w_1\leq w_2 \ \Longleftrightarrow \ r_{w_1}(i,j)\geq r_{w_2}(i,j) \quad \textrm{for all } 1\leq i,j\leq 2n.
\end{equation}
For any permutation $\tau\in \Sfr_{2n}$, write $M_\tau\in \GL_{2n}$ for the permutation matrix of $\tau$. We say that a pair $(i,j)$ is admissible for $\tau$ if it satisfies the following conditions:
\begin{bulletlist}
    \item $1\leq i<j\leq 2n$
    \item $\tau(i)>\tau(j)$
    \item There is no $i<k<j$ such that $\tau(j)<\tau(k)<\tau(i)$.
\end{bulletlist}
In other words, $(i,j)$ is admissible for $\tau$ if and only if the submatrix of $M_\tau$ with corners $(i,\tau(i))$ and $(j,\tau(j))$ has only zero coefficients, except for these two corners. Now, we let $w\in W$ and define three sets $\Ecal^1_w$, $\Ecal_w^2$, $\Ecal_w^3$ as follows:
\begin{align}
    \Ecal_w^1 &\colonequals \{(i,j) \ | \ \textrm{admissible for $w$, and } 1\leq i<j\leq n \} \\
    \Ecal_w^2 &\colonequals \{(i,j) \ | \ \textrm{admissible for $w$, }i\leq n<j  \textrm{ and } w(i),w(j)\leq n \} \\
    \Ecal_w^3 &\colonequals \{(i,j) \ | \ \textrm{admissible for $w$, } i\leq n 
 \textrm{ and } j=2n+1-i \}.
\end{align}
It is easy to see that $\Ecal_w^1$, $\Ecal_w^2$, $\Ecal_w^3$ are pairwise disjoint. We set
\begin{equation}
    \Ecal_w=\Ecal_w^1\sqcup \Ecal_w^2 \sqcup \Ecal_w^3.
\end{equation}
For $(i,j)\in \Ecal_w$, define a positive root $\gamma(i,j)\in \Phi^+$ as follows:
\begin{equation}
    \gamma(i,j)\colonequals \begin{cases}
        e_i-e_j & \textrm{if } (i,j)\in \Ecal_w^1 \\
         e_i+e_{2n+1-j} & \textrm{if }  (i,j)\in \Ecal_w^2 \\
          2e_i & \textrm{if }  (i,j)\in \Ecal_w^3.
    \end{cases}
\end{equation}
Recall the set $E_w\subset \Phi^+$ defined in \eqref{def-Ew}. One has the following lemma:

\begin{lemma}\label{lemma-gamma}
    The map $\gamma$ is a bijection of $\Ecal_w$ onto the set $E_w$.
\end{lemma}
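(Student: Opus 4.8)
The plan is to prove the lemma by combining two facts: first, that $E_w$ admits a clean combinatorial description in terms of admissible pairs for $w$ (viewed as a permutation of $\{1,\dots,2n\}$), and second, that the three families $\Ecal_w^1,\Ecal_w^2,\Ecal_w^3$ are precisely the admissible pairs that produce the three types of positive roots $e_i-e_j$, $e_i+e_k$, $2e_i$. Recall that $\alpha\in E_w$ iff $ws_\alpha<w$ and $\ell(ws_\alpha)=\ell(w)-1$; equivalently $ws_\alpha$ is a lower neighbour of $w$ in the Bruhat order. For the ambient group $\GL_{2n}$ it is classical that right multiplication by a transposition $s_{ij}$ (where $\alpha=e_i-e_j$) produces a lower neighbour of a permutation $\tau$ precisely when $(i,j)$ is admissible for $\tau$ in the sense defined before the lemma (the submatrix of $M_\tau$ with corners $(i,\tau(i))$, $(j,\tau(j))$ is otherwise zero). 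So the first step is to recall/establish this $\GL_{2n}$ statement, and then descend it to $W=W(\GSp_{2n},T)$.

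The descent is the technical heart. Since $W\hookrightarrow \Sfr_{2n}$ is the subgroup of permutations with $w(i)+w(2n+1-i)=2n+1$, each reflection $s_\alpha$ of $W$ corresponds to either a single transposition $(i,j)$ with $j=2n+1-i$ (the long roots $2e_i$, case $\Ecal^3$), or a pair of transpositions $\{(i,j),(2n+1-j,2n+1-i)\}$ acting simultaneously (the roots $e_i-e_j$ when $j\le n$, case $\Ecal^1$; and $e_i+e_k$ with $k=2n+1-j$, case $\Ecal^2$). Using the length formula $\ell(w)=M(w)+N(w)$ from \eqref{length-eq} and the Bruhat criterion \eqref{bruhat-eq} in terms of the rank functions $r_w(i,j)$, I would check in each of the three cases that the condition "$ws_\alpha$ is a lower neighbour in $W$" translates exactly into the admissibility condition imposed in the definition of $\Ecal_w^1$, $\Ecal_w^2$, $\Ecal_w^3$ — in particular that admissibility of $(i,j)$ automatically forces the correct behaviour of the mirror transposition $(2n+1-j,2n+1-i)$, so that the length drops by exactly one in $W$ and not by two. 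The bookkeeping here (that a simultaneous double transposition still changes the $\GSp$-length by $1$, because the two $\GL$-inversions created/destroyed are identified under the symplectic symmetry) is where one must be careful, especially around the "diagonal" pairs in $\Ecal_w^3$ where $j=2n+1-i$ and the reflection is a single transposition contributing to $N(w)$ rather than $M(w)$.

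Having matched admissible pairs with lower neighbours, it remains to see that $\gamma$ is well-defined (lands in $\Phi^+$), injective, and surjective onto $E_w$. Well-definedness and landing in $\Phi^+$ are immediate from the case definitions ($e_i-e_j$ with $i<j\le n$; $e_i+e_{2n+1-j}$ with $i\le n<j$, $w(i),w(j)\le n$, so $2n+1-j<i$ is impossible and one gets a genuine positive root of the form $e_a+e_b$; $2e_i$ with $i\le n$). Injectivity follows because the three cases produce roots of three disjoint shapes, and within each case the pair $(i,j)$ is recoverable from the root (for $\Ecal^1$ directly; for $\Ecal^2$ from $j=2n+1-(\text{second index})$; for $\Ecal^3$ from $i$). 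Surjectivity is the statement that every $\alpha\in E_w$ arises this way, which is exactly the content of the descent above: given $\alpha\in E_w$, the reflection $s_\alpha$ acts on $\{1,\dots,2n\}$ as one of the three types, the lower-neighbour condition forces the corresponding $(i,j)$ to be admissible and to satisfy the extra constraints ($1\le i<j\le n$, or $i\le n<j$ with $w(i),w(j)\le n$, or $j=2n+1-i$), hence $(i,j)\in\Ecal_w$ and $\gamma(i,j)=\alpha$. I expect the main obstacle to be the second paragraph — verifying in the double-transposition cases ($\Ecal^1$ and $\Ecal^2$) that the $\GSp$-Bruhat length really drops by exactly one, which requires carefully pairing up the inversions in $\Mcal(\cdot)$ and $\Ncal(\cdot)$ created or removed by the two mirror-symmetric transpositions; everything else is routine case analysis.
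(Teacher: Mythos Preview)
Your proposal is correct and follows essentially the same approach as the paper: both argue case by case over the three root types, using the length formula $\ell(w)=M(w)+N(w)$ to verify that $\gamma(\Ecal_w)\subset E_w$, and then prove surjectivity by showing that a non-admissible pair would produce an intermediate element contradicting $\ell(ws_\alpha)=\ell(w)-1$. One small difference worth noting: for the $\Ecal_w^2$ case the paper avoids redoing the inversion bookkeeping by observing that $(w(j_0),w(i_0))\in\Ecal_{w^{-1}}^1$ and using $ws_\alpha=s_{w\cdot\alpha}w$ to reduce to the already-handled $\Ecal^1$ case for $w^{-1}$, which is a bit slicker than pairing up inversions directly as you plan.
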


\begin{proof}
We first show that when $(i_0,j_0)\in \Ecal_w$, the positive root $\alpha=\gamma(i_0,j_0)$ lies in $E_w$. By the characterization of the Bruhat order (see \eqref{bruhat-eq}), we clearly have $w s_\alpha < w$. Next, we need to show that $\ell(w s_\alpha)=\ell(w)-1$. We consider the following cases:
\begin{bulletlist}
\item The case $ (i_0,j_0)\in \Ecal_w^1$: We construct a bijection $\epsilon\colon \Mcal(w)\setminus\{(i_0,j_0)\}\to \Mcal(ws_\alpha)$. Let $(i,j)\in \Mcal(w)\setminus\{(i_0,j_0)\}$.
\begin{equation}
    \begin{cases}
\textrm{if } i<i_0 \textrm{ and } w(i_0)>w(i)>w(j_0), & \ \textrm{we set } \epsilon(i,j_0)=(i,i_0) \\
\textrm{if } j_0<j \textrm{ and } w(i_0)>w(j)>w(j_0), & \ \textrm{we set } \epsilon(i_0,j)=(j_0,j).
\end{cases}
\end{equation}
In all other cases, set $\epsilon(i,j)=(i,j)$. Then on checks easily that $\epsilon$ is a bijection $\epsilon\colon \Mcal(w)\setminus\{(i_0,j_0)\}\to \Mcal(ws_\alpha)$. Therefore, we have $M(ws_\alpha)=M(w)-1$. On the other hand, if two permutations $\tau$, $\tau'$ satisfy $\{\tau(1), \dots, \tau(n)\}=\{\tau'(1),\dots, \tau'(n)\}$, then one clearly has $N(\tau)=N(\tau')$. Hence we deduce $N(ws_\alpha)=N(w)$ and thus $\ell(ws_\alpha)=\ell(w)-1$.
\item The case $(i_0,j_0)\in \Ecal_w^2$: In this case, one has $(w(j_0),w(i_0))\in \Ecal_{w^{-1}}^1$. We have $ws_\alpha = s_{w\cdot \alpha} w$. By the first case above, we deduce 
\begin{equation}
    \ell(ws_\alpha)=\ell(s_{w\cdot \alpha} w)=\ell(w^{-1}s_{w\cdot \alpha})=\ell(w^{-1})-1=\ell(w)-1.
\end{equation}
\item The case $(i_0,j_0)\in \Ecal_w^3$: Denote by $A$ the set of pairs $(i,i_0)$ satisfying $1\leq i <i_0$ and $w(j_0)<w(i)<w(i_0)$. One shows easily the following:
\begin{align}
    \Mcal(ws_\alpha)&=\Mcal(w)\sqcup A \\
    \Ncal(ws_\alpha)&=\Ncal(w)\setminus (A\cup \{(i_0,i_0)\})
\end{align}
It follows that $\ell(ws_\alpha)=\ell(w)-1$. This terminates the proof of $\gamma(\Ecal_w)\subset E_w$.
\end{bulletlist}
\bigskip

Finally, we show that any $\alpha\in E_w$ is of the form $\gamma(i_0,j_0)$ for $(i_0,j_0)\in \Ecal_w$. We consider the case $\alpha=e_{i_0}-e_{j_0}$ for $1\leq i_0<j_0\leq n$. Since $ws_\alpha < w$, it is clear that $w(i_0)>w(j_0)$. We check that $(i_0,j_0)$ is admissible for $w$. If $i_0<k<j_0$ satisfies $w(i_0)>w(k)>w(j_0)$, then define $\beta\colonequals e_{i_0}-e_k$. Using \eqref{bruhat-eq}, one sees immediately that $w>ws_\beta > ws_\alpha$. This contradicts the assumption $\ell(ws_\alpha)=\ell(w)-1$. Hence $(i_0,j_0)$ is admissible for $w$. In the case $\alpha=e_{i_0}+e_{j_0}$ (resp. $\alpha=2e_{i_0}$), a similar argument applies to show that $(i_0,j_0)$ (resp. $(i_0,2n+1-i_0)$) is admissible. We deduce the result.
\end{proof}

\subsection{Auxilliary sequence}\label{subsec-auxil}

Using the same strategy as in \cite{Goldring-Koskivirta-GS-cone}, we define a descending path in $W$ from the longest element $w_0$ of $W$ to $w_{\max}=w_{0,I}w_0$ (the longest element of ${}^I W$) satisfying certain requirements. Specifically, we construct a sequence of elements $\tau_1,\tau_2,\dots,\tau_N$ satisfying the following conditions:
\begin{definitionlist}
\item $\tau_1=w_0$ and $\tau_N=w_{\max}$.
\item $\tau_1>\dots > \tau_N$ and $\ell(\tau_{i+1})=\ell(\tau_{i})-1$ for each $i=1,\dots,N-1$.
\item Each $\tau_i$ admits a separating system of partial Hasse invariants (Definition \ref{def-syst-PHI}).
\end{definitionlist}

\bigskip

For $1\leq d\leq n$, let $\Lambda_d\in W$ be the element corresponding to the following permutation:
\begin{equation}
    \Lambda_d \colonequals 
   \left( \begin{array}{ccc|ccc|ccc}
        &&& & & &1 & &  \\
        &&& & & & &\ddots &  \\
        &&& & & & & & 1 \\ \hline
        &&& & & 1 & & &  \\
        &&& & \iddots& & & &  \\
        &&& 1& & & & & \\ \hline 
        1&&& & & & & & \\ 
        &\ddots&& & & & & & \\ 
        &&1& & & & & & \\ 
    \end{array}\right)
\end{equation}
where the upper right block and the lower left block have size $d\times d$ and the middle block has size $(2n-2d)\times (2n-2d)$. In particular, $\Lambda_1=w_0$ is the longest element of $W$ and $\Lambda_n=w_{0,I}w_0=w_{\max}$ is the longest element of ${}^I W$. For each $1\leq d\leq n-1$, we construct a path from $\Lambda_d$ to $\Lambda_{d+1}$ as follows. Define
\begin{align}
    \tau_d^{(0)} &= \Lambda_d  \\
    \tau_d^{(1)} &= \tau_d^{(0)}  s_{\alpha_1} \quad  \textrm{with} \ \alpha_1=e_{1}-e_{d+1} \\
 \tau_d^{(2)} &= \tau_d^{(1)} s_{\alpha_2} \quad \textrm{with} \ \alpha_2=e_{2}-e_{d+1} \\
 &\vdots & \\
 \tau_d^{(d-1)} &= \tau_d^{(d-2)} s_{\alpha_{d-1}} \quad \textrm{with} \ \alpha_{d-1} =e_{d-1}-e_{d+1} \\
 \tau_d^{(d)} &= \tau_d^{(d-1)} s_{\alpha_d} \quad \ \ \textrm{with} \ \alpha_d=e_{d}-e_{d+1}     
\end{align}
At each step, the coefficient 1 in the $d+1$-th row of the matrix moves down by one. Therefore, after $d$ steps, we have $\tau_d^{(d)} = \Lambda_{d+1}$. It is easy to check that $\ell(\tau_d^{(i+1)})=\ell(\tau_d^{(i)})-1$. By this construction, we obtain a path in $W$ from $\Lambda_d$ to $\Lambda_{d+1}$. By concatenating these paths, we obtain a path from $\Lambda_1=w_0$ to $\Lambda_n=w_{\max}$.

\begin{proposition}\label{prop-sep-syst}
For all $1\leq d\leq n-1$ and all $0\leq i \leq d-1$, the element $\tau_d^{(i)}$ admits a separating system of partial Hasse invariants.
\end{proposition}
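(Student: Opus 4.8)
The plan is to reduce the statement to an explicit linear‑algebra verification, carried out separately for each $\tau_d^{(i)}$. By Lemma~\ref{lemma-gamma}, for any $w\in W$ the map $\gamma$ identifies $E_w$ (see \eqref{def-Ew}) with $\Ecal_w=\Ecal_w^1\sqcup\Ecal_w^2\sqcup\Ecal_w^3$, so $w$ admits a separating system of partial Hasse invariants in the sense of Definition~\ref{def-syst-PHI} if and only if the coroots $\{\gamma(i,j)^\vee\mid (i,j)\in\Ecal_w\}$ are linearly independent in $X_*(T)\otimes_\ZZ\QQ$. Thus for each pair $(d,i)$ with $1\le d\le n-1$ and $0\le i\le d-1$ I would (a) write down the permutation $\tau_d^{(i)}\in\Sfr_{2n}$ explicitly, (b) read off the three families $\Ecal_{\tau_d^{(i)}}^1,\Ecal_{\tau_d^{(i)}}^2,\Ecal_{\tau_d^{(i)}}^3$ from its permutation matrix, and (c) check that the resulting coroots are independent.

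For step (a): since $s_{e_k-e_{d+1}}$ acts on $W\subset\Sfr_{2n}$ as the product of transpositions $(k,\,d+1)(2n-k+1,\,2n-d)$, an immediate induction on $i$, starting from the block description of $\Lambda_d$, gives a closed formula for $\tau_d^{(i)}(j)$ for all $j$; concretely, as indicated after the definition of the $\tau_d^{(i)}$, passing from $\tau_d^{(i)}$ to $\tau_d^{(i+1)}$ only moves one entry of the permutation matrix by a single position. For step (b), an admissible pair is one whose two corners bound a rectangle of the permutation matrix with no other nonzero entry, and for the highly structured matrices $\tau_d^{(i)}$ this is a direct inspection; one can cross‑check that the pairs found are exactly the lower covers of $\tau_d^{(i)}$ using the length formula \eqref{length-eq} and the Bruhat criterion \eqref{bruhat-eq}. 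As sample outputs, $\tau_1^{(0)}=\Lambda_1=w_0$ has $E_{w_0}=\Delta$, while for $\GSp_6$ one finds $E_{\Lambda_2}=\{e_1-e_3,\ e_2-e_3,\ 2e_3\}$ and $E_{\tau_2^{(1)}}=\{e_2-e_3,\ 2e_1,\ 2e_3\}$.

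The real content is step (c). The coroots produced by $\gamma$ have one of the three shapes $e_a^\vee-e_b^\vee$ (from $\Ecal^1$), $e_a^\vee+e_b^\vee$ (from $\Ecal^2$), and $e_a^\vee$, the coroot of $2e_a$ (from $\Ecal^3$). Independence of such a family can be certified either by the signed‑graphic‑matroid criterion (a vertex for each index that occurs, a signed edge for each $e_a\mp e_b$, a half‑edge for each $2e_a$), whose failure forces a balanced cycle in the support, or — more robustly in practice — by ordering $E_{\tau_d^{(i)}}$ so that the coroots are in echelon form with respect to $e_1^\vee,\dots,e_n^\vee$. From the explicit lists in (b) I expect each connected component of the associated graph to be a path, or a path carrying a single half‑edge at one end, making both checks essentially visible; in particular the $\Ecal^3$‑pairs supply pivots $e_a^\vee$ and the $\Ecal^1$‑pairs triangulate the rest, and one verifies en route that $|E_{\tau_d^{(i)}}|\le n$. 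The main obstacle is therefore not the linear algebra itself but the bookkeeping: one must show that, as $i$ runs from $0$ to $d-1$, the $\Ecal^2$‑pairs (the only source of $e_a^\vee+e_b^\vee$ coroots) never combine with $\Ecal^1$‑ and $\Ecal^3$‑pairs to close a balanced cycle. I would organize this by an induction on $i$ tracking how the three families $\Ecal^1,\Ecal^2,\Ecal^3$ of $\tau_d^{(i)}$ change at each elementary step $s_{e_{i+1}-e_{d+1}}$, treating $d=1$ (where $\tau_1^{(0)}=w_0$, $E_{w_0}=\Delta$) as the trivial base of the whole scheme.
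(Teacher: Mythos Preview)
Your approach is essentially the paper's: compute $E_{\tau_d^{(i)}}$ via Lemma~\ref{lemma-gamma} and verify linear independence of the resulting coroots. The substantive point you are missing, and which dissolves what you call the ``main obstacle'', is that $\Ecal_{\tau_d^{(i)}}^2=\emptyset$ for \emph{every} $(d,i)$. Indeed, each reflection $s_{e_j-e_{d+1}}$ preserves the subset $\{1,\dots,n\}$, so $\tau_d^{(i)}(\{1,\dots,n\})=\Lambda_d(\{1,\dots,n\})\subset\{n+1,\dots,2n\}$; hence no index $k\le n$ satisfies $\tau_d^{(i)}(k)\le n$, and the defining condition of $\Ecal^2$ is never met. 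With no coroots of the form $e_a^\vee+e_b^\vee$ present, your balanced-cycle bookkeeping is vacuous and step~(c) collapses to a trivial echelon-form check on a list consisting only of differences $e_a-e_b$ and roots $2e_k$.

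The paper simply writes down the explicit list $E_{\tau_d^{(i)}}$ in closed form for $1\le d\le n-2$ (a union of roots $e_k-e_{d+1}$, $e_k-e_{d+2}$, $e_k-e_{k+1}$, together with $2e_n$) and a separate list for $d=n-1$, then observes independence. Your sample computations are correct and consistent with this; in particular your $E_{\tau_2^{(1)}}=\{e_2-e_3,\,2e_1,\,2e_3\}$ for $n=3$ shows that the paper's stated list for $d=n-1$ omits some $\Ecal^3$-contributions, though this does not affect the independence claim.
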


\begin{proof}
We use Lemma \ref{lemma-gamma}. For each $1\leq d\leq n-2$ and each $0\leq i \leq d-1$, the element $\tau_d^{(i)}$ has exactly $n$ lower neighbors. By the shape of the permutation matrix of $\tau_d^{(i)}$, we see that the set $E_{\tau_d^{(i)}}$ is made of five different parts, namely:
\begin{align}
    E_{\tau_d^{(i)}}=\{e_k-e_{d+2} \ | \ 1\leq k \leq i\} \ \cup \ \{e_{d+1}-e_{d+2}\} \ \cup \ \{e_k-e_{d+1} \ | \ i+1\leq k \leq d\} \\
    \cup \ \{e_{k}-e_{k+1} \ | \ d+2\leq k \leq n-1\} \ \cup \ \{2e_n\}.
\end{align}
It is straight-forward to check that the above elements are linearly independent. Finally, in the case $d=n-1$, the number of lower neighbors of $\tau_{n-1}^{(i)}$ is $n-i-1$ for all $0\leq i \leq n-2$. Specifically, we have
\begin{equation}
    E_{\tau_{n-1}^{(i)}} = \{e_k-e_n \ | \ i+1\leq k \leq n-1\}.
\end{equation}
Again, the above elements are linearly independent. This terminates the proof.
\end{proof}

In particular, there exists a partial Hasse invariant on the flag stratum parametrized by $\tau_{d}^{(i)}$ whose vanishing locus is exactly the Zariski closure of the stratum parametrized by $\tau_{d}^{(i+1)}$, for each $1\leq d\leq n-1$ and each $0\leq i\leq d-1$. We first examine the case $1\leq d\leq n-2$. In this case, the elements of $E_{\tau_{d}^{(i)}}$ form a basis of $\ZZ^n$, so there is a unique such partial Hasse invariant (up to multiple). To determine the weight of this partial Hasse invariant, recall that $\tau_{d}^{(i+1)}=\tau_{d}^{(i)} s_{\alpha}$ with $\alpha=e_{i+1}-e_{d+1}$. We consider the character 
\begin{equation}
    \chi_d^{(i)}\colonequals e_{i+1}.
\end{equation}
By the proof of Proposition \ref{prop-sep-syst}, $\chi_d^{(i)}$ is orthogonal to all elements of $E_{\tau_{d}^{(i)}}$ different from $\alpha=e_{i+1}-e_{d+1}$, and satisfies $\langle \chi_d^{(i)}, \alpha^\vee \rangle =1$. By \eqref{weight-Ha} the corresponding partial Hasse invariant $\Ha_{\tau_{d}^{(i)},\chi_d^{(i)}}$ has weight
\begin{align}
    \ha_{\tau_{d}^{(i)},\chi_d^{(i)}} &=  - \tau_{d}^{(i)} \chi_d^{(i)} + p w_{0,I}w_0 \chi_d^{(i)} \\
    & = e_{d-i+1} - p e_{n-i}. \label{eq-ha-e}
\end{align}
We denote the above character simply by $\ha_d^{(i)}$.

\begin{lemma}\label{lemma-ha-Lmin}
For each $1\leq d\leq n-1$ and each $0\leq i\leq d-1$, the weight $\ha_d^{(i)}$ lies in the $L$-minimal cone \eqref{ineq-thm}.
\end{lemma}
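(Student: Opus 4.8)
The plan is to unwind the definition of the $L$-minimal cone and then carry out a short finite check on the two nonzero coordinates of $\ha_d^{(i)}$. By \eqref{eq-ha-e} we have $\ha_d^{(i)}=e_{d-i+1}-p\,e_{n-i}$; set $r\colonequals d-i+1$ and $s\colonequals n-i$, so that the coordinates $(a_1,\dots,a_n)$ of $\ha_d^{(i)}$ are $a_r=1$, $a_s=-p$ and $a_k=0$ for $k\notin\{r,s\}$. Note $1<r\leq d+1\leq n$, $n-d+1\leq s\leq n$ and $s-r=n-d-1\geq 0$, so $r\leq s$, with equality exactly when $d=n-1$; in that case \eqref{eq-ha-e} degenerates to $\ha_{n-1}^{(i)}=(1-p)e_{n-i}$ (the same computation of $h_{\tau_{n-1}^{(i)}}(e_{i+1})$ yields this, even though the derivation above was phrased only for $d\leq n-2$), i.e. $a_r=1-p$ and $a_k=0$ for $k\neq r$. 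Recall also that for $G=\GSp_{2n}$ the set $\Phi^+\setminus\Phi^+_L$ is the disjoint union of the two $W_L\rtimes\Gal(\FF_{p^2}/\FF_p)$-orbits $\Ocal_1=\{2e_k\mid 1\leq k\leq n\}$ and $\Ocal_2=\{e_j+e_k\mid 1\leq j<k\leq n\}$, and that $\langle\lambda,(2e_k)^\vee\rangle=a_k$ while $\langle\lambda,(e_j+e_k)^\vee\rangle=a_j+a_k$.

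I would then reduce the family of inequalities \eqref{orb-ineq} defining $\Ccal_{L-\Min}$ to one inequality per orbit. Fixing an orbit $\Ocal$ and writing $f(\alpha)\colonequals\langle\ha_d^{(i)},\alpha^\vee\rangle$, the left-hand side of \eqref{orb-ineq} equals $\sum_{\alpha\in\Ocal}f(\alpha)-\tfrac{p-1}{p}\sum_{\alpha\in S}f(\alpha)$, which is maximised over $S\subseteq\Ocal$ by taking $S=\{\alpha\in\Ocal\mid f(\alpha)<0\}$; hence it suffices to prove
\[
\sum_{\alpha\in\Ocal,\ f(\alpha)\geq 0} f(\alpha)\ +\ \frac1p\sum_{\alpha\in\Ocal,\ f(\alpha)<0} f(\alpha)\ \leq\ 0
\]
for $\Ocal\in\{\Ocal_1,\Ocal_2\}$. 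For $\Ocal_1$: when $r<s$ the only nonzero values are $f(2e_r)=1$ and $f(2e_s)=-p$, so the left-hand side is $1+\tfrac1p(-p)=0$; when $r=s$ it is $\tfrac1p(1-p)<0$. For $\Ocal_2$ with $r<s$: $f$ equals $1$ on the $n-2$ pairs $\{r,k\}$ with $k\neq r,s$, equals $1-p$ on $\{r,s\}$, equals $-p$ on the $n-2$ pairs $\{k,s\}$ with $k\neq r,s$, and $0$ otherwise, giving $(n-2)+\tfrac1p\bigl((1-p)+(n-2)(-p)\bigr)=\tfrac{1-p}{p}<0$. For $\Ocal_2$ with $r=s$: $f$ equals $1-p$ on the $n-1$ pairs containing $r$ and $0$ otherwise, giving $\tfrac1p(n-1)(1-p)\leq 0$. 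In every case the bound holds, so $\ha_d^{(i)}\in\Ccal_{L-\Min}$.

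The argument is essentially routine once \eqref{eq-ha-e} is in hand; the only points needing a moment of care are confirming \eqref{eq-ha-e} in the boundary case $d=n-1$ (where it degenerates as above, which only makes the inequalities easier) and checking that the degenerate small cases are consistent with the stated counts — $n=1$, where $1\leq d\leq n-1$ is empty and the statement is vacuous, and $n=2,3$, where some of the index sets above are empty. If one only wishes to verify the literal inequalities \eqref{ineq-thm} indexed by $j$ rather than all of \eqref{orb-ineq}, the check is even shorter: $\sum_{k\leq j}a_k+\tfrac1p\sum_{k>j}a_k$ equals $\tfrac{1-p}{p}$ for $j<r$, equals $0$ for $r\leq j<s$, and equals $1-p$ for $j\geq s$, all $\leq 0$ since $p\geq 2$.
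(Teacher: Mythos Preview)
Your proof is correct and follows essentially the same approach as the paper: both use the explicit formula \eqref{eq-ha-e} for $\ha_d^{(i)}=e_r-pe_s$ with $r=d-i+1\leq s=n-i$ and then verify the defining inequalities directly. The paper's proof is the one-liner ``this follows immediately from \eqref{eq-ha-e}, noting that $d-i+1\leq n-i$,'' whereas you have spelled out the verification in full---indeed more than is strictly needed, since checking the $j$-indexed inequalities of \eqref{ineq-thm} (your final paragraph) already suffices for the application, and your additional check of all of \eqref{orb-ineq} for both orbits $\Ocal_1,\Ocal_2$ is a bonus; your handling of the boundary case $d=n-1$ (where $r=s$) is also a detail the paper leaves implicit.
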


\begin{proof}
This follows immediately from \eqref{eq-ha-e}, noting that $d-i+1\leq n-i$.
\end{proof}

\subsection{Hasse-regularity}\label{sec-Hasse-reg}

In this section, we prove that the flag stratum $\Flag(X)_{w_{\max}}$ is Hasse-regular in the case $X=\overline{\Ascr}_{n,K}$. For $w\in W$, we write $C_{K,w}$ for the cone $C_{X,w}$ defined in \eqref{CXw-eq}, and $\Ccal_{K,w}$ for its saturation. Recall that we parametrize characters in $X^*(T)$ by $n+1$-tuples $(a_1,\dots,a_n,b)$ satisfying $\sum_{i=1}^n a_i \equiv b \pmod{2}$ (see \eqref{XT-GSp}).

\begin{theorem}\label{thm-Hasse-reg-wmax}
The flag stratum $\Flag(\overline{\Ascr}_{n,K})_{w_{\max}}$ is Hasse-regular. Explicitly, one has
\begin{equation}
    \Ccal_{K,w_{\max}} = \Ccal_{\Hasse,w_{\max}} = \{(a_1,\dots,a_n,b)\in X^*(T) \ | \ a_i\leq 0, \ i=1,\dots,n \}.
\end{equation}
\end{theorem}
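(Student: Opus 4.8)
The plan is to prove the two inclusions $\Ccal_{\Hasse,w_{\max}}\subseteq \Ccal_{K,w_{\max}}$ and $\Ccal_{K,w_{\max}}\subseteq \Ccal_{\Hasse,w_{\max}}$, making $C_{\Hasse,w_{\max}}$ explicit along the way. Since the parabolic $P$ attached to $\mu_G$ is defined over $\FF_p$, the formula recalled in \S\ref{pha-sec} gives $C_{\Hasse,w_{\max}}=\{\lambda\in X^*(T)\mid \langle\lambda,\alpha^\vee\rangle\leq 0\text{ for all }\alpha\in\Phi^+\setminus\Phi^+_L\}$; for $G=\GSp_{2n}$ one has $\Phi^+\setminus\Phi^+_L=\{2e_i\}\cup\{e_i+e_j\mid i<j\}$, with $\langle(a_1,\dots,a_n,b),(2e_i)^\vee\rangle=a_i$ and $\langle(a_1,\dots,a_n,b),(e_i+e_j)^\vee\rangle=a_i+a_j$, so these conditions collapse to $a_i\leq 0$ for all $i$. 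Thus $C_{\Hasse,w_{\max}}=\{(a_1,\dots,a_n,b)\mid a_i\leq 0\}$ is already saturated. The inclusion $C_{\Hasse,w_{\max}}\subseteq C_{K,w_{\max}}$ is part of \eqref{inclu}, so the crux is the reverse containment $C_{K,w_{\max}}\subseteq\{(a_1,\dots,a_n,b)\mid a_i\leq 0\}$; establishing it forces $C_{K,w_{\max}}=C_{\Hasse,w_{\max}}$ on the nose, hence also on saturations.

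To prove this containment I would argue by contradiction: assume $\lambda=(a_1,\dots,a_n,b)$ has $a_{i_0}>0$ for some $i_0$, yet $\Vcal_{\flag}(\lambda)$ has a nonzero global section over $\overline{\Flag}(\overline{\Ascr}_{n,K})_{w_{\max}}$. Using the Koecher principle (Theorem \ref{thm-koecher}) and the extension $\zeta^\Sigma$ of \S\ref{sec-tor-ext}, I would first replace $\overline{\Ascr}_{n,K}$ by a toroidal compactification, so that $X_G\colonequals\overline{\Ascr}_{n,K}$ becomes proper (this is needed only so that the cones of the auxiliary Hilbert varieties below behave well). Then I would choose a totally real field $\mathbf F$ of degree $n$ in which $p$ splits completely --- such fields exist --- together with a compatible level $K'^p\subseteq u^{-1}(K^p)$, form $X_H\colonequals\overline{\Hscr}_{\mathbf F,K'}$ (and its compactification), and invoke the functoriality of \S\ref{sec-functo-flag} to get the commutative diagram \eqref{diag-HG}, in particular the lift $\widetilde u_{\flag}\colon X_H\to\Flag_G$ of $\widetilde u$. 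By Proposition \ref{uflag-image} together with \eqref{diag-HG}, $\widetilde u_{\flag}$ carries the (nonempty, dense) $\mu_H$-ordinary locus of $X_H$ into $\Flag(X_G)_{w_{\max}}$, hence $\widetilde u_{\flag}(X_H)\subseteq\overline{\Flag}(X_G)_{w_{\max}}$, and since this closed subscheme is reduced, $\widetilde u_{\flag}$ factors through it.

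The key computation is the pullback along $\widetilde u_{\flag}$. Since $u(B_H)\subseteq B_G$ and the parabolic of $\mu_H$ is $B_H$ itself, $\widetilde u_{\flag}^*(\Vcal_{\flag}(\lambda)^{\otimes d})$ is the automorphic line bundle $\Vcal_{I_H}(d\,\lambda|_{T_H})$ on $X_H$, and using the explicit form \eqref{emb-split} of $u$ on diagonal tori one checks that, in the standard coordinates of $X^*(T_H)$ (the $n$ partial Hodge weights together with the multiplier), $\lambda|_{T_H}$ has the same first $n$ coordinates $(a_1,\dots,a_n)$ as $\lambda$. Now Conjecture \ref{cone-conj} is known for Hilbert--Blumenthal varieties at split primes, so the saturated characteristic-$p$ cone of $X_H$ equals the corresponding zip cone, which by \eqref{H0zip-eq} (the parabolic of $\mu_H$ being $T_H$, defined over $\FF_p$) consists exactly of weights with first $n$ coordinates $\leq 0$. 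Since $d\,\lambda|_{T_H}$ has $i_0$-th coordinate $d\,a_{i_0}>0$ for every $d\geq 1$, it lies outside this cone, so $H^0(X_H,\Vcal_{I_H}(d\,\lambda|_{T_H}))=0$. Hence every section of every power $\Vcal_{\flag}(\lambda)^{\otimes d}$ over $\overline{\Flag}(X_G)_{w_{\max}}$ restricts to zero on $\widetilde u_{\flag}(X_H)$, in particular vanishes at the point $y\colonequals\widetilde u_{\flag}(y_0)$ for any $y_0$ in the $\mu_H$-ordinary locus; therefore $y\in\bigcap_{d\geq 1}B(\Vcal_{\flag}(\lambda)^{\otimes d})=\BB_{K^p,w_{\max}}(\lambda)$, so this stable base locus is nonempty. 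Since the reflex field of $\overline{\Ascr}_{n,K}$ is $\QQ$, so that $\mathbf E_\pfr=\QQ_p$, Corollary \ref{cor-sbl-zero} then forces $\BB_{K^p,w_{\max}}(\lambda)=\overline{\Flag}(X_G)_{w_{\max}}$, i.e.\ $H^0(\overline{\Flag}(X_G)_{w_{\max}},\Vcal_{\flag}(\lambda))=0$, contradicting the assumption.

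I expect the decisive --- and most delicate --- step to be the passage from "every section vanishes on the low-dimensional sublocus $\widetilde u_{\flag}(X_H)$" to "every section vanishes identically": this is arithmetic rather than geometric, resting on the Hecke-stability of the stable base locus (Theorem \ref{B-hecke-stable}) and the density of prime-to-$p$ Hecke orbits of ordinary points (Theorem \ref{thm-Hx-dense}, i.e.\ Chai's theorem), packaged into Corollary \ref{cor-sbl-zero}. The remaining ingredients --- arranging the toroidal compactifications and the Hilbert embedding compatibly, and the bookkeeping identifying $\lambda|_{T_H}$ in the right coordinates, including the sign normalizations for $X^*(T)$ and $X^*(T_H)$ --- are routine but must be carried out with care.
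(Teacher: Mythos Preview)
Your proposal is correct and follows essentially the same approach as the paper: embed a Hilbert--Blumenthal variety (for a totally real field in which $p$ splits) into $\overline{\Flag}_{G,w_{\max}}$ via Proposition~\ref{uflag-image}, use the known case of Conjecture~\ref{cone-conj} for such varieties to force every section of every power of $\Vcal_{\flag}(\lambda)$ to vanish on the image when some $a_{i_0}>0$, and then invoke Corollary~\ref{cor-sbl-zero} (Hecke-stability plus Chai's density theorem) to propagate this vanishing to the whole stratum. The only difference is cosmetic: the paper cites the Diamond--Kassaei/Goldring--Koskivirta result directly rather than recomputing the Hilbert zip cone from \eqref{H0zip-eq}, and it does not pass to a toroidal compactification (your parenthetical step is harmless but unnecessary, since the Hilbert cone result already holds on the open variety).
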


The above theorem has also consequences for automorphic forms over $\overline{\FF}_p$. Indeed, let $f\in H^0(\overline{\Ascr}_{n,K},\Vcal_I(\lambda))$ be an automorphic form and assume that $\lambda=(a_1,\dots,a_n,b)$ satisfies $a_j>0$ for some $1\leq j \leq n$. We may view $f$ as a section of $\Vcal_{\flag}(\lambda)$ on the flag space of $\overline{\Ascr}_{n,K}$ using the identification \eqref{identif-lambda}. Then, the above theorem implies that $f$ vanishes identically on the flag stratum $\Flag(\overline{\Ascr}_{n,K})_{w_{\max}}$. Write simply $\BB(\lambda)$ for the stable base-locus of $\Vcal_{\flag}(\lambda)$ (since the level $K$ is fixed).

\begin{corollary}
Let $\lambda=(a_1,\dots, a_n,b)\in X^*(T)$ and assume that $\lambda \notin \Ccal_{\GS}$. Then we have
\begin{equation}
    \overline{\Flag}(\overline{\Ascr}_{n,K})_{w_{\max}} \subset \BB(\lambda).
\end{equation}
\end{corollary}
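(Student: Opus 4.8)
The plan is to reduce the corollary to Theorem~\ref{thm-Hasse-reg-wmax}, together with the elementary fact that $\Vcal_I(\lambda)=0$ as soon as $\lambda$ is not $I$-dominant. Since $\BB(\lambda)=\bigcap_{d\geq 1}B(\Vcal_{\flag}(d\lambda))$, it is enough to show $\overline{\Flag}(\overline{\Ascr}_{n,K})_{w_{\max}}\subset B(\Vcal_{\flag}(d\lambda))$ for every $d\geq 1$. Because $\zeta$ is smooth, the closure $\overline{\Flag}(\overline{\Ascr}_{n,K})_{w_{\max}}$ is normal (§\ref{sec-flag-strata}), in particular reduced, so this containment is equivalent to the vanishing of the restriction map
\[
H^0\bigl(\Flag(\overline{\Ascr}_{n,K}),\Vcal_{\flag}(d\lambda)\bigr)\longrightarrow H^0\bigl(\overline{\Flag}(\overline{\Ascr}_{n,K})_{w_{\max}},\Vcal_{\flag}(d\lambda)\bigr)
\]
for each $d\geq 1$. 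For that it suffices that one of the two spaces be zero, and the argument will split according to whether $\lambda$ is $I$-dominant or not.

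First I would handle the case where $\lambda$ is not $I$-dominant. Then $d\lambda$ is not $I$-dominant either, so $V_I(d\lambda)=\Ind_B^P(d\lambda)=0$ and hence $\Vcal_I(d\lambda)=0$ on $\overline{\Ascr}_{n,K}$. Using $\pi_{K,*}(\Vcal_{\flag}(d\lambda))=\Vcal_I(d\lambda)$ together with the identification~\eqref{identif-lambda}, the source $H^0(\Flag(\overline{\Ascr}_{n,K}),\Vcal_{\flag}(d\lambda))=H^0(\overline{\Ascr}_{n,K},\Vcal_I(d\lambda))$ vanishes, so the restriction map is zero and we are done in this case.

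Next I would treat $\lambda=(a_1,\dots,a_n,b)$ which is $I$-dominant, so $a_1\geq\cdots\geq a_n$, but which lies outside $\Ccal_{\GS}$. Then at least one inequality $\langle\lambda,\alpha^\vee\rangle\leq 0$ with $\alpha\in\Phi^+\setminus\Phi^+_L$ fails. Since $\Phi^+\setminus\Phi^+_L=\{2e_i\mid 1\leq i\leq n\}\cup\{e_i+e_j\mid 1\leq i<j\leq n\}$ with $\langle\lambda,(2e_i)^\vee\rangle=a_i$ and $\langle\lambda,(e_i+e_j)^\vee\rangle=a_i+a_j$, and since $a_i\leq 0$ for all $i$ would force $a_i+a_j\leq 0$ as well, it follows that $a_j>0$ for some $j$. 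Hence $d\lambda$ has a strictly positive $a$-coordinate for every $d\geq 1$, and Theorem~\ref{thm-Hasse-reg-wmax} gives
\[
d\lambda\notin\Ccal_{K,w_{\max}}=\{(a_1,\dots,a_n,b)\in X^*(T)\mid a_i\leq 0,\ i=1,\dots,n\}\supset C_{K,w_{\max}},
\]
so the target $H^0(\overline{\Flag}(\overline{\Ascr}_{n,K})_{w_{\max}},\Vcal_{\flag}(d\lambda))$ vanishes. Combining the two cases yields the corollary.

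The only genuinely substantial input is Theorem~\ref{thm-Hasse-reg-wmax}, which is already available; everything else is formal bookkeeping. The one point that deserves a moment of care is the opening reduction: one must invoke that $\overline{\Flag}(\overline{\Ascr}_{n,K})_{w_{\max}}$ carries its reduced structure, so that membership in the base locus of $\Vcal_{\flag}(d\lambda)$ is the same as vanishing of the restricted section, and use the trivial observation that a base locus is the whole space whenever there are no global sections at all.
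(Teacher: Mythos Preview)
Your proof is correct and follows essentially the same approach as the paper: both deduce the containment from Theorem~\ref{thm-Hasse-reg-wmax} applied to all positive multiples $d\lambda$. Your treatment is in fact slightly more careful than the paper's, which asserts that $\lambda\notin\Ccal_{\GS}$ ``means precisely that some coordinate $a_i$ is positive'' without separating out the non-$I$-dominant case (where this characterization fails, e.g.\ $\lambda=(-2,-1,\dots)$); your case split handles this cleanly via $\Vcal_I(d\lambda)=0$.
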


\begin{proof}
The condition $\lambda \notin \Ccal_{\GS}$ means precisely that some coordinate $a_i$ is positive. The above argument then shows that any element of $H^0(\Flag(\overline{\Ascr}_{n,K}),\Vcal_{\flag}(\lambda))$ vanishes on the flag stratum $\Flag(\overline{\Ascr}_{n,K})_{w_{\max}}$. Since the same argument applies to positive multiples of $\lambda$, we deduce the result.
\end{proof}

To prove Theorem \ref{thm-Hasse-reg-wmax}, we use an embedding from a Hilbert--Blumenthal Shimura variety into $\overline{\Ascr}_{n,K}$. Note that the cones $\Ccal_{w}$ are independent of the choice of the level $K$ (by the same formal argument as \cite[Corollary 1.5.3]{Koskivirta-automforms-GZip}). Therefore, we may change $K$ freely if necessary. Choose a totally real field $\mathbf{F}$ such that $p$ splits in $\mathbf{F}$, and consider the associated Hilbert--Blumenthal Shimura variety as in section \ref{hb-var-sec}. Choose open compact subgroups $K'^p\subset \mathbf{H}(\AA_f^p)$ and $K^p\subset \mathbf{G}(\AA_f^p)$ such that $u(K'^p)\subset K^p$, where $u\colon \mathbf{H}\to \mathbf{G}$ is the embedding \eqref{u-embedQ}. We adopt the same notation as in section \ref{sec-GZip-HT} and diagram \eqref{diag-HG}, namely we write $X_H\colonequals \overline{\Hscr}_{\mathbf{F},K'}$ and $X_G\colonequals \overline{\Ascr}_{n,K}$. Write also $\Flag_G$ for the flag space of $X_G$.

\begin{proof}[Proof of Theorem \eqref{thm-Hasse-reg-wmax}]
We first note that the formation of the line bundles $\Vcal_{\flag}(\lambda)$ is functorial. To explain this, we briefly consider the more general setting of diagram \eqref{diag-flag-zip}, where $H,G$ are arbitrary connected reductive $\FF_p$-groups endowed with cocharacters $\mu_H,\mu_G$ respectively and $f\colon H\to G$ is a compatible embedding. Write $B_H$, $B_G$ for the Borel subgroups as in \S\ref{sec-functo-flag} and choose maximal tori $T_H\subset B_H$ and $T_G\subset B_G$ such that $f(T_H)\subset T_G$. It is clear by construction that for any $\lambda\in X^*(T_G)$, one has
\begin{equation}\label{Vflag-pb}
     f_{\flag}^*(\Vcal_{\flag}(\lambda)) = \Vcal_{\flag}(f^*(\lambda))
\end{equation}
where $f^*(\lambda)$ is the character $\lambda\circ f\colon T_H\to \GG_{\mathrm{m}}$. We now return to the present setting. The embedding $u\colon H\to G$ induces an isomorphism $u\colon T_H\to T$ of tori. For $\lambda=(a_1,\dots, a_n,b)\in X^*(T)$, the character $u^*(\lambda)$ is given by
\begin{equation}
\left(
\left(\begin{matrix}
    x t_1 & \\ & x t_1^{-1}
\end{matrix} \right), \dots, \left(\begin{matrix}
    x t_n & \\ & x t_n^{-1}
\end{matrix} \right)
\right)\mapsto x^b \prod_{i=1}^n t_i^{a_i} .
\end{equation}
Let $\lambda=(a_1,\dots, a_n,b)\in X^*(T)$ and assume that $a_j>0$ for some $j\in \{1,\dots,n\}$. We want to show that $H^0(\overline{\Flag}_{G,w_{\max}},\Vcal_{\flag}(\lambda))=0$, where $\overline{\Flag}_{G,w_{\max}}$ is the Zariski closure of $\Flag_{G,w_{\max}}$ endowed with the reduced structure. Let $s\in H^0(\overline{\Flag}_{G,w_{\max}},\Vcal_{\flag}(\lambda))$ be a section. By Proposition \ref{uflag-image}, the $\mu_H$-ordinary locus (since $p$ splits in $\mathbf{F}$, this is simply the usual ordinary locus) of $X_H$ is mapped by $\widetilde{u}_{\flag}$ (see diagram \eqref{diag-HG}) into the flag stratum $\Flag_{G,w_{\max}}$. Thus, we obtain a map
\begin{equation}\label{map-uflag-bar}
    \widetilde{u}_{\flag}\colon X_H \to \overline{\Flag}_{G,w_{\max}}.
\end{equation}
Consider the pullback $\widetilde{u}_{\flag}^*(s)$ via the map \eqref{map-uflag-bar}. By the above functoriality property \eqref{Vflag-pb}, this is a section of $\Vcal_{\flag}(u^*(\lambda))$ over $X_H$. By
\cite[Corollary 4.2.4]{Goldring-Koskivirta-global-sections-compositio} or \cite[Corollary 8.2]{Diamond-Kassaei}, we know that Conjecture \ref{cone-conj} holds for Hilbert--Blumenthal Shimura varieties. Since we are assuming that $p$ splits in $\mathbf{F}$, the cone of $X_H$ is simply given by the condition $a_i\leq 0$ for all $i=1,\dots, n$. Hence, since we assumed $a_j>0$ for some $j\in \{1,\dots,n\}$, we deduce:
\begin{equation}
    H^0(X_H, \Vcal_{\flag}(u^*(\lambda))) = 0.
\end{equation}
In particular, $\widetilde{u}_{\flag}^*(s)=0$. This implies that any section of $\Vcal_{\flag}(\lambda)$ over $\overline{\Flag}_{G,w_{\max}}$ must vanish at each point in the image of $\widetilde{u}_{\flag}$. Furthermore, for any $r\geq 1$ we have $\Vcal_{\flag}(\lambda)^{\otimes r}=\Vcal_{\flag}(r\lambda)$, so the same argument shows that any section of $\Vcal_{\flag}(\lambda)^{\otimes r}$ over $\overline{\Flag}_{G,w_{\max}}$ is zero on the image of $\widetilde{u}_{\flag}$. In particular, the stable base locus of $\lambda$ on the stratum
$\overline{\Flag}_{G,w_{\max}}$ satisfies
\begin{equation}
    \BB_{K^p,w_{\max}}(\lambda) \neq \emptyset.
\end{equation}
By Corollary \ref{cor-sbl-zero}, we obtain
\begin{equation}
    H^0(\overline{\Flag}_{G,w_{\max}}, \Vcal_{\flag}(\lambda)^{\otimes r}) = 0, \quad \textrm{for all} \ r\geq 1.
\end{equation}
This terminates the proof of Theorem \ref{thm-Hasse-reg-wmax}.
\end{proof}

\subsection{Proof of the main result} \label{sec-proof}
In this final section, we prove Theorem \ref{main-thm}. This part of the argument is similar to the one used in \cite{Goldring-Koskivirta-GS-cone}. Assume that $w\in W$ admits a separating system of partial Hasse invariants (Definition \ref{def-syst-PHI}). Let $\beta\in E_w$ and let $\chi_\beta\in X^*(T)$ be a character satisfying
\begin{equation}
    \begin{cases}
        \langle \chi_\beta, \beta^\vee\rangle >0 & \textrm{and} \\
        \langle \chi_\beta, \alpha^\vee\rangle =0 & \textrm{for all } \alpha\in E_w\setminus\{\beta\}.
    \end{cases}
\end{equation}
as in \eqref{chibeta}. Write $m=\langle \chi_\beta, \beta^\vee\rangle$. Then, the corresponding section $\Ha_{w,\chi_\beta}$ vanishes exactly on $\overline{\Fcal}_{ws_\beta}$, with multiplicity $m$. Write $\ha_{w,\chi_\beta}$ for the weight of $\Ha_{w,\chi_\beta}$ (see \eqref{weight-Ha}). Let $s\in H^0(\overline{\Flag}_{G,w},\Vcal_{\flag}(\lambda))$ and assume that $s$ vanishes with multiplicity $d\geq 0$ along $\overline{\Flag}_{G,ws_\beta})$ (since Zariski closures of flag strata are normal, it makes sense to talk about multiplicities). If $d=0$, then the restriction of $s$ to $\overline{\Flag}_{G,ws_\beta}$ is nonzero. If $d>0$, then $s^m$ can be written as $s^m=(\Ha_{w,\chi_\beta})^d s'$ where $s'$ is not identically zero on $\overline{\Flag}_{G,ws_\beta}$. Hence the weight of $s'$ lies in $C_{K,ws_\beta}$, and we deduce $\lambda\in C_{K,ws_\beta} + \ZZ_{\geq 0} \ha_{w,\chi_\beta}$. We have shown:
\begin{equation}\label{CK-inclu-rec}
C_{K,w} \subset C_{K,ws_\beta} + \ZZ_{\geq 0} \ha_{w,\chi_\beta}.
\end{equation}
Therefore, we can propagate information about a particular stratum to the strata above it, provided that there exists a separating system of partial Hasse invariants.

\begin{proof}[Proof of Theorem \ref{main-thm}]
By Theorem \ref{thm-Hasse-reg-wmax}, the cone $\Ccal_{K,w_{\max}}$ is given by the condition $a_i\leq 0$ for all $i=1,\dots,n$. Therefore, it is clear that $\Ccal_{K,w_{\max}}$ is contained in $\Ccal_{L-\Min}$. Consider the sequence 
\begin{equation}
 (\tau^{(i)}_d)_{\substack{1\leq d\leq n-1\\ 0\leq i\leq d-1}}   
\end{equation}
constructed in section \ref{subsec-auxil}. By Proposition \ref{prop-sep-syst}, each element in this sequence admits a separating system of partial Hasse invariants. Furthermore, Lemma \ref{lemma-ha-Lmin} shows that for each element of the sequence, the weight of the partial Hasse invariant $\ha_d^{(i)}$ (for $1\leq d\leq n-1$ and $0\leq i \leq d-1$) lies in $\Ccal_{L-\Min}$. By using the inclusion \eqref{CK-inclu-rec} recursively, we deduce $C_{K,w_0} \subset \Ccal_{L-\Min}$. Since $C_{K,w_0}=C_K(\overline{\FF}_p)$, we deduce that $C_K(\overline{\FF}_p)\subset \Ccal_{L-\Min}$, hence also $\Ccal(\overline{\FF}_p)\subset \Ccal_{L-\Min}$. Since $\Ccal(\overline{\FF}_p)$ is always contained in the $I$-dominant cone $X^*_{+,I}(T)$, we find $\Ccal(\overline{\FF}_p)\subset \Ccal^{+,I}_{L-\Min}$.
\end{proof}

\bibliographystyle{test}
\bibliography{biblio_overleaf}

\noindent
Jean-Stefan Koskivirta\\
Department of Mathematics, Faculty of Science, Saitama University, 
255 Shimo-Okubo, Sakura-ku, Saitama City, Saitama 338-8570, Japan \\
jeanstefan.koskivirta@gmail.com

\end{document}